\documentclass{birkjour}
\usepackage{csquotes}
\usepackage[many]{tcolorbox}
 \newtheorem{thm}{Theorem}[section]
 \newtheorem{cor}[thm]{Corollary}
 \newtheorem{lem}[thm]{Lemma}
 \newtheorem{prop}[thm]{Proposition}
 \newtheorem{problem}[thm]{Problem}
 \theoremstyle{definition}
 \newtheorem{defn}[thm]{Definition}
 \theoremstyle{remark}
 \newtheorem{rem}[thm]{\bf Remark}
 \newtheorem{ex}[thm]{\bf Example}
 \numberwithin{equation}{section}
 \newtheorem{quest}[thm]{\bf Question}

\begin{document}

\title[The Local Operator Moment Problem on  $\mathbb{R}$]
 {The Local Operator Moment Problem on  $\mathbb{R}$}

\author{R.E. Curto}
\address{R.E. Curto, Department of Mathematics, The University of Iowa, Iowa City, Iowa, U.S.A.}
\email{raul-curto@uiowa.edu}

\author[A. Ech-charyfy]{A. Ech-charyfy}

\address{%
Laboratory of Mathematical Analysis and Applications,\\
Faculty of Sciences, Mohammed V University in Rabat,\\
4 Av. Ibn Battouta, Rabat,
Morocco}

\email{abderrazzak\_echcharyfy@um5.ac.ma}

\author[H. El Azhar]{H. El Azhar}
\address{Laboratoire d’Informatique, Mathématique et leurs Applications (LIMA)\\ Faculty of Sciences, Chouaib Doukkali University,\\
Route Ben Maachou, 24000, El Jadida, Morocco}
\email{el-azhar.h@ucd.ac.ma}

\author[E.H. Zerouali]{E.H. Zerouali}
\address{Permanent address: Laboratory of Mathematical Analysis and Applications, Faculty of Sciences, Mohammed V University in Rabat, Rabat, Morocco.\& Department of Mathematics, The University of Iowa, Iowa City, Iowa, U.S.A.}
\email{elhassan.zerouali@fsr.um5.ac.ma \& ezerouali@uiowa.edu}

\subjclass{47B15,46G10,44A60,47A20}

\keywords{Operator-valued measures, operator moment problem, local operator moment problem, recursive sequences, subnormal operator weighted shifts. }
\begin{abstract}
We study the connections between operator moment sequences ${\mathcal T}=\displaystyle(T_n)_{n\in\mathbb{Z}_+}$ of self-adjoint operators on a complex Hilbert space $\mathcal{H}$ and the local moment sequences $\langle{\mathcal T}x,x\rangle = (\langle T_nx,x\rangle)_{n\in\mathbb{Z}_+}$  for arbitrary $x\in \mathcal{H}$. \ We provide necessary and sufficient conditions for solving the operator moment problem on $\mathbb{R}$, and we show that these criteria are automatically valid on compact subsets of $\mathbb{R}$. \ Applications of the compact case are used to study subnormal operator weighted shifts. \ A Stampfli-type propagation theorem for subnormal operator weighted shifts is also established. \ In addition, we discuss the validity of Tchakaloff's Theorem for operator moment sequences with compact support.\ In the case of a recursively generated sequence of self-adjoint operators, necessary and sufficient conditions for an affirmative answer to the operator recursive moment problem are provided, and the support of the associated representing operator-valued measure is described.

\end{abstract}

\maketitle
 
\section{Introduction}
In the sequel, we use the symbols $\mathbb{C}$, $\mathbb{R}$,   $\mathbb{N}=\{1,2,\cdots\}$ and $\mathbb{Z}_+=\{0,1,\cdots\}$ to denote the sets of complex numbers, real numbers,  positive integers, and non negative integers respectively. \ For $\mathbb{K}=\mathbb{C} \mbox{ or } \mathbb{R}$, $\mathbb{K}[X]$ represents the algebra of all polynomials with coefficients in $\mathbb{K}$. \ Denote ${\mathcal Z}(P)$ for the set of zeros of the polynomial $P\in \mathbb{R}[X]$. \ Given  $K\subseteq \mathbb{R}$, we denote by $\mathcal{P}(K)$ the algebra of all complex polynomials over $K$, and by $\mathcal{P}^+(K)$ the cone of positive polynomials of $\mathcal{P}(K)$. \ For $K$  compact,   $\mathcal{C}(K)$ is the $C^*$--algebra of all continuous complex functions on $K$ endowed with its supremum norm.

Let $\mathcal{H}$ and $\mathcal{K}$ be separable (complex) Hilbert spaces. \ Denote by $\mathbf{B}(\mathcal{H}, \mathcal{K})$ the Banach space of all bounded linear operators from $\mathcal{H}$ to $\mathcal{K}$. \ If $T \in \mathbf{B}(\mathcal{H}, \mathcal{K})$, then $\mathcal{N}(T)$ and $\mathcal{R}(T)$ stand for the kernel and the range of $T$, respectively. \ It is well known that $\mathbf{B}(\mathcal{H}):=\mathbf{B}(\mathcal{H}, \mathcal{H})$ is a $C^*$--algebra with   unit $I_\mathcal{H}$, the identity operator on $\mathcal{H}$, and $0_\mathcal{H}$ the null operator on $\mathcal{H}$. \  We will refer to the inner product and the corresponding norm of $\mathcal{H}$ by $\langle \cdot, \cdot \rangle_{\mathcal{H}}$ and $\| \cdot \|_{\mathcal{H}}$, respectively, or simply by $\langle \cdot, \cdot \rangle$ and $\| \cdot \|$, if there is no possible confusion. \ The unit sphere of $\mathcal{H}$ is given by $\mathcal{S}_{\mathcal{H}} = \{x \in \mathcal{H} \mid \|x\|_{\mathcal{H}} = 1\}$. \ As usual, for $T\in \mathbf{B}(\mathcal{H})$, the adjoint operator is denoted by $T^*$ and the spectrum of $T$  by $\sigma(T)$. \ We say that $T$ is {\it self-adjoint} if $T=T^*$, is {\it positive} if $\langle Tx,x \rangle_{\mathcal{H}}\ge 0$ for all $x\in\mathcal{H}$, and  is an {\it orthogonal projection} if it is self-adjoint and $T^2=T$. \ We shall often use the notation $T\ge 0$ for a positive operator $T$. \ The real vector space of all bounded self-adjoint operators is denoted by $\mathbf{B}_h(\mathcal{H})$, and the cone of all positive linear operators is denoted by $\mathbf{B}_+(\mathcal{H})$. \ The algebra of all sequences ${\mathcal T}=(T_n)_{n\in\mathbb{Z}_+}$ where $T_n\in\mathbf{B}_h(\mathcal{H}) $ for every $n\in\mathbb{Z}_+$ is denoted by  $\mathbf{B}_h(\mathcal{H}) ^{\mathbb{Z}_+}$. \ A sequence ${\mathcal T}=(T_n)_{n\in\mathbb{Z}_+}$ in  $\mathbf{B}_h(\mathcal{H}) ^{\mathbb{Z}_+}$ will be said to be {\it positive} (in symbols, ${\mathcal T}\ge 0$) if for every $x_0,\dots,x_n \in \mathcal{H}$ and $n\in\mathbb{Z}_+$, we have $\sum\limits_{i,j =0}^{n}\langle T_{i+j}x_i,x_j\rangle_{\mathcal{H}}\ge 0$. 

We use $\mathcal{B}(\mathbb{R})$ to denote the $\sigma$--algebra of all Borel subsets of $\mathbb{R}$. \ An {\it operator-valued charge} (abbreviated OVC) is a mapping $E:\mathcal{B}(\mathbb{R})\to \mathbf{B}(\mathcal{H})$, such that for every $x,y\in\mathcal{H}$, the function $E_{x,y}(.)=\langle E(.)x,y\rangle_{\mathcal{H}}$ is a complex Borel measure on $\mathbb{R}$ such that $E_{x,x}(.)$ is a real charge for every $x\in \mathcal{H}$. \ Here by a {\it real charge} on $\mathbb{R}$ we mean a countably additive map $\mu:\mathcal{B}(\mathbb{R}) \to \mathbb{R}$.\\
 
An {\it operator-valued measure} (abbreviated OVM) is an operator-valued charge $E$ such that $E_{x,x}(.)$ is a positive measure for every $x\in \mathcal{H}$. \
As usual, for every $\lambda \in \mathbb{R}$,  the scalar measure concentrated on the singleton set  $\{\lambda\}$ is denoted by $\delta_\lambda$.  

In the literature, for a sequence ${\mathcal T}=(T_n)_{n\in\mathbb{Z}_+}\in\mathbf{B}_h(\mathcal{H}) ^{\mathbb{Z}_+}$, the term ``operator moment sequence on $K$" typically refers to the existence of integral representations in the operator moment form \eqref{1} below, as in the following definition.
\begin{defn}
A sequence ${\mathcal T}=(T_n)_{n\in\mathbb{Z}_+}\in\mathbf{B}_h(\mathcal{H}) ^{\mathbb{Z}_+}$ is called an operator $K$--moment sequence, where  $K\subseteq \mathbb{R}$ is closed, if there exists an operator-valued measure $E$ supported on $K$, such that
\begin{equation}\label{1}
\langle T_nx,y\rangle_{\mathcal{H}}=\int_{K}t^n dE_{x,y}(t), \text{ for every } x,y\in\mathcal{H} \text{ and }n\in\mathbb{Z}_+.
\end{equation}
In this case, $E$ is a {\it representing operator-valued measure} for the operator moment sequence  ${\mathcal T}$.
\end{defn} We also define a weaker version of the operator moment problem as follows.
\begin{defn}
A sequence ${\mathcal T}=(T_n)_{n\in\mathbb{Z}_+}\in\mathbf{B}_h(\mathcal{H}) ^{\mathbb{Z}_+}$ is called a {\it local operator $K$--moment sequence}, where  $K\subseteq \mathbb{R}$ is closed, if for every $x\in {\mathcal H}$ we have $\langle{\mathcal T}x,x\rangle:=(\langle T_nx,x\rangle)_{n\in\mathbb{Z}_+}$ is a scalar moment sequence. \ That is if for every $x\in {\mathcal H}$ there exists a positive measure $\mu_x$ supported on $K$, such that
\begin{equation}\label{el1}
\langle T_nx,x\rangle_{\mathcal{H}}=\int_{K}t^n d\mu_{x}(t).
\end{equation}
An operator moment sequence (resp. a local operator moment sequence) is {\it determinate} if there is a unique operator-valued measure satisfying \eqref{1} (resp. a unique $\mu_x$ satisfying \eqref{el1} for each $x\in\mathcal{H}$) and is {\it indeterminate} otherwise. \ An OVM associated with an indeterminate (determinate) operator moment sequence is referred to as indeterminate (determinate) OVM.
\end{defn} 
Determining whether a given sequence ${\mathcal T}=(T_n)_{n\in\mathbb{Z}_+}$ is an operator $K$--moment sequence is a challenging task and requires sophisticated mathematical techniques. \ The operator $K$--moment problem is formally stated as follows.
\begin{problem}[Operator moment problem]\label{omp}  Let ${\mathcal T}=(T_n)_{n\in\mathbb{Z}_+}\in\mathbf{B}_h(\mathcal{H}) ^{\mathbb{Z}_+}$ and let $K$ be a closed subset of $\mathbb{R}$. \ Find necessary and sufficient conditions to ensure that ${\mathcal T}$ is an operator $K$--moment sequence.
\end{problem} 
Let ${\mathcal T}=(T_n)_{n\in\mathbb{Z}_+}\in\mathbf{B}_h(\mathcal{H}) ^{\mathbb{Z}_+}$ be an operator moment sequence and let $E$ be an associated representing measure. \ It is clear that  for every $x\in \mathcal{H}$, the sequence $\langle{\mathcal T}x,x\rangle:=(\langle T_nx,x\rangle_{\mathcal{H}})_{n\in\mathbb{Z}_+}$ is a scalar moment sequence associated with the scalar measure $E_x(.):=\langle E(.)x,x\rangle$. \ It is then natural to consider the next problem, which we will refer to as the {\it local operator $K$--moment problem}.
\begin{problem}[Local operator moment problem]\label{lmp}  Let ${\mathcal T}=(T_n)_{n\in\mathbb{Z}_+}\in\mathbf{B}_h(\mathcal{H}) ^{\mathbb{Z}_+}$ and let $K$ be a closed subset of $\mathbb{R}$. \ Find necessary and sufficient conditions to ensure that ${\mathcal T}$ is a local operator $K$--moment sequence.
\end{problem} 

An affirmative answer to Problem \ref{omp} implies an affirmative answer to Problem \ref{lmp}. One of the first results on the converse was provided by B. Sz-Nagy in \cite{Nagy1952}. More precisely, if $K$ is compact, then Problem $\ref{lmp}$ owns a solution $\Rightarrow $ Problem \ref{omp} owns a solution.\ However, the converse is not valid in general, as shown in Example \ref{ex1} below. \ ( See Section 2  for further discussion and additional results. ) \\

In the truncated operator moment problem, the initial data consists of a finite sequence $(T_0,\cdots, T_r)$. \ The related moment problems are stated as follows.
\begin{problem}[Truncated operator moment problem]\label{tomp}   Given $  T_0,\cdots, T_r $ in $\mathbf{B}_h(\mathcal{H}) $ and a closed set $K\subseteq\mathbb{R}$. \ Under which conditions  there exists an operator-valued measure $E$ supported on $K$, such that
\begin{equation*}
\langle T_nx,y\rangle_{\mathcal{H}}=\int_{K}t^n dE_{x,y}(t), \text{ for every } x,y\in\mathcal{H} \text{  and }n=0,\cdots,  r.
\end{equation*}
\end{problem} 

\begin{problem}[Truncated local operator moment problem]\label{tlmp}  Given $ T_0,\cdots, T_r$ in $\mathbf{B}_h(\mathcal{H})$ and a closed set $K\subseteq\mathbb{R}$. \ Determine the conditions under which $ (\langle T_0x,x\rangle,\cdots, \langle T_rx,x\rangle)$  is a truncated scalar $K$--moment sequence for every $x\in\mathcal{H}$.
\end{problem} 

This paper is organized as follows. \ In Section 2, we introduce the essential tools for the problem. \ Operator-valued measures are defined, and their main spectral properties are given. \ We also investigate the solubility and the determinacy of the operator moment problem and the local operator moment problem.
The case of operator-valued moment problems on compact sets is studied in Section 3. \ Applications are given to subnormal operator weighted shifts, and an analog of Stampfli's Theorem is provided. \ Section 4 is devoted to the recursive operator moment problem.
\section{Operator-valued Moment Problems}
\subsection{Operator-valued measures}
The notion of ``positive" operator-valued measure (abbreviated OVM), introduced in the 1940s by Naimark \cite{Neumark}, represents an extension of the concept of spectral measure. \ To better understand this notion, it is essential to note that spectral measures are in a one-to-one correspondence with self-adjoint operators. \ Spectral measures hold profound significance in quantum physics as they represent quantum observables, which describe measurable properties of quantum systems, such as the position or the spin of a particle. \ (For a thorough discussion on this topic, see, for instance,   \cite{brandt1999positive,moretti2017spectral}.) \ In other words, each spectral measure is associated with a unique self-adjoint operator. \ This correspondence is relatively straightforward. \ However, OVMs go even further since they generalize the concept of a quantum observable by allowing greater flexibility in describing measurements in quantum mechanics. \ Unlike spectral measures, OVMs are not strictly tied to self-adjoint operators but rather to symmetric operators. \ This generalization introduces additional complexity into the correspondence between OVMs and operators because symmetric operators are more varied and include self-adjoint operators as a particular case (see \cite{busch2016quantum,dubin2014operator}). \ In the sequel, we will define OVMs and describe their central role in the theory of operator moment problems.

An {\it operator-valued charge} (OVC for short) is a mapping $E:\mathcal{B}(\mathbb{R})\to \mathbf{B}(\mathcal{H})$, such that for every $x,y\in\mathcal{H}$, the function $E_{x,y}(.)=\langle E(.)x,y\rangle_{\mathcal{H}}$ is a complex Borel measure on $\mathbb{R}$. \ Moreover, the following statements hold.
\begin{enumerate}
\item Let $E$ be  an OVC. \ Then $E$ is an OVM  if and only if $E_x(.)=\langle E(.)x,x\rangle_{\mathcal{H}}$ is a positive measure for every $x\in\mathcal{H}$.
\item $E$ is a {\it semi-spectral measure} if it is an OVM satisfying $E(\mathbb{R})=I_\mathcal{H}$.
\item $E$ is a  {\it spectral  measure} if it is  {\it semi-spectral} and  $E(B)$ is an {\it orthogonal projection} for every $B\in \mathcal{B}(\mathbb{R})$.
\end{enumerate}

Let $E$ be an OVC. \ We define the support of  $E$ as the smallest closed subset $A$ of $\mathbb{R}$ such that $E(\mathbb{R}\setminus B)=0_\mathcal{H}$ for every Borel set $B\supseteq A$. \ (For a related definition, see the one presented at the end of \cite[Page 69]{schmudgen2012unbounded} or \cite[definition 16]{berberian1966notes}, in which the author designates the support of $E$ as the co-spectrum of $E$, i.e., $\bigvee (E)$.) 

The connection between the support of  $E$ and the supports of the corresponding charges $E_x \;\; (x\in\mathcal{H})$ is given by the following relation:

\begin{equation*}
\text{supp}(E)=\overline{\bigcup_{x, y \in\mathcal{H}} \text{supp}(E_{x,y})}.
\end{equation*}
Using the polarization identity, we obtain
\[E_{x,y}= \frac{1}{4}(E_{x+y}-E_{x-y} + iE_{x+iy}-iE_{x-iy}),\]
and then 
\begin{equation}\label{11}
\text{supp}(E)=\overline{\bigcup_{x\in\mathcal{H}} \text{supp}(E_x)}=\overline{\bigcup_{x\in\mathcal{S}_\mathcal{H}} \text{supp}(E_x)}.
\end{equation}
An OVC with finite support is said to be {\it finitely atomic}. \ It will be represented in the following form:
\begin{equation}\label{first}
E=\displaystyle\sum_{k=1}^{r}S_k\delta_{\lambda_k},
\end{equation}
where $\text{supp}(E)=\{\lambda_1,\lambda_2,\ldots,\lambda_r\}\subseteq \mathbb{R}$, and $S_1,S_2,\ldots,S_r \in \mathbf{B}(\mathcal{H})$. \\
It is clear that $E$ is an OVM if and only if $S_1, S_2, \ldots, S_r \in \mathbf{B}_+(\mathcal{H})$.

The relationship between an OVM, a {\it semi-spectral measure}, and a {\it spectral measure} is given by the following famous Naimark's Dilation Theorem.
\begin{thm}\label{pom}\cite[Theorem 4]{mlak1978dilations}
Let $E:\mathcal{B}(\mathbb{R})\to \mathbf{B}_+(\mathcal{H})$ be an {\rm OVM}. \ Then there exists a Hilbert space $\mathcal{K}$, a bounded linear operator $V:\mathcal{H}\to\mathcal{K}$, and a spectral measure $F:\mathcal{B}(\mathbb{R})\to \mathbf{B}_+(\mathcal{K})$ such that
\begin{equation*}
E(.)=V^*F(.)V.
\end{equation*}
Moreover, if $E$ is a semi-spectral measure, then $\mathcal{H}\subseteq\mathcal{K}$ and $E(.)=P_\mathcal{H}F(.)$ on $\mathcal{H}$, where $P_\mathcal{H}$ is the orthogonal projection of $\mathcal{K}$ onto $\mathcal{H}$.
\end{thm}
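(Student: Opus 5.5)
The plan is to deduce the statement from Stinespring's dilation theorem, after turning the measure $E$ into a completely positive map on a commutative $C^*$--algebra. To handle bounded functions cleanly, first pass to the one-point compactification $\widehat{\mathbb{R}}=\mathbb{R}\cup\{\infty\}$ and extend $E$ by $E(\{\infty\})=0$. Positivity of each $E_x$ together with the uniform boundedness principle gives $\sup_{\|x\|\le 1}E_x(\mathbb{R})<\infty$; in fact $E_x(\mathbb{R})=\langle E(\mathbb{R})x,x\rangle$, so $E(\mathbb{R})$ is a bounded positive operator. Define
\[
\Phi:\mathcal{C}(\widehat{\mathbb{R}})\to \mathbf{B}(\mathcal{H}),\qquad \langle \Phi(f)x,y\rangle=\int f\,dE_{x,y}.
\]
This sesquilinear form is bounded, with bound $\|f\|_\infty\,\|E(\mathbb{R})\|$ after polarization. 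Moreover $\Phi$ is positive, since $\langle\Phi(f)x,x\rangle=\int f\,dE_x\ge 0$ whenever $f\ge 0$.

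Because the domain is commutative, a positive map on it is automatically completely positive (Stinespring's theorem on commutative domains). A direct check is also available: approximate $[f_{ij}]\ge 0$ by simple functions and use the positivity of $\sum_{i,j}\langle E(B)x_j,x_i\rangle\,\overline{a_i}a_j$. Stinespring's theorem then produces a Hilbert space $\mathcal{K}$, a bounded $V:\mathcal{H}\to\mathcal{K}$, and a unital $*$-representation $\pi$ of $\mathcal{C}(\widehat{\mathbb{R}})$ on $\mathcal{K}$ with $\Phi(f)=V^*\pi(f)V$. Concretely, $\mathcal{K}$ is the completion of $\mathcal{C}(\widehat{\mathbb{R}})\otimes\mathcal{H}$ under $\langle f\otimes x,g\otimes y\rangle=\langle\Phi(\bar g f)x,y\rangle$, taken modulo null vectors, and $Vx=1\otimes x$.

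Next, extend $\pi$ to a spectral measure. The representation $\pi$ extends to bounded Borel functions, producing a projection-valued spectral measure $\tilde F$ on $\widehat{\mathbb{R}}$ with $\tilde F(\widehat{\mathbb{R}})=I_\mathcal{K}$; this is the spectral theorem for representations of $\mathcal{C}(X)$. The functions $B\mapsto\langle V^*\tilde F(B)Vx,y\rangle$ and $E_{x,y}$ both represent the same functional on $\mathcal{C}(\widehat{\mathbb{R}})$, so the uniqueness in the Riesz representation theorem shows they agree on Borel sets. In particular $V^*\tilde F(\{\infty\})V=0$. Set $F(B)=\tilde F(B)$ for $B\in\mathcal{B}(\mathbb{R})$, adding $\tilde F(\{\infty\})$ to $F$ at some point, say $0$, only if $F(\mathbb{R})=I$ is wanted. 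Alternatively, replace $\mathcal{K}$ by $(I-\tilde F(\{\infty\}))\mathcal{K}$, which contains $V\mathcal{H}$ because $\|\tilde F(\{\infty\})Vx\|^2=0$. This gives $E(\cdot)=V^*F(\cdot)V$.

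For the semi-spectral case, $E(\mathbb{R})=I$ gives $V^*V=\Phi(1)=I$, so $V$ is an isometry. Identify $\mathcal{H}$ with $V\mathcal{H}\subseteq\mathcal{K}$; then $V^*$ becomes $P_\mathcal{H}$, and $E(\cdot)=P_\mathcal{H}F(\cdot)|_\mathcal{H}$.

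The main obstacle is the passage from the continuous-function representation back to Borel sets, which requires the Riesz uniqueness argument and control of the mass at $\infty$. Everything else is standard Stinespring machinery.
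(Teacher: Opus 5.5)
Your proof is correct. The paper gives no proof of this statement: it is Naimark's dilation theorem, quoted from Mlak. So the natural comparison is with the classical argument, which your route differs from.

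The classical proof works directly on the $\sigma$-algebra. The kernel $(A,B)\mapsto E(A\cap B)$ is positive definite, i.e. $\sum_{i,j}\langle E(A_i\cap A_j)x_j,x_i\rangle\ge 0$; this follows by refining $A_1,\dots,A_n$ into the atoms of the finite algebra they generate. One then completes the $\mathcal{H}$-valued simple functions $\sum_i\chi_{A_i}\otimes x_i$ under $\langle\chi_A\otimes x,\chi_B\otimes y\rangle=\langle E(A\cap B)x,y\rangle$. Let $F(B)$ act as multiplication by $\chi_B$ and put $Vx=\chi_{\mathbb{R}}\otimes x$. Then $F(\mathbb{R})=I_{\mathcal{K}}$ is automatic, and countable additivity of $F$ follows from that of the $E_{x,y}$ together with density.

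You instead go through $\mathcal{C}(\widehat{\mathbb{R}})$:
\begin{enumerate}
\item positive implies completely positive on a commutative domain;
\item Stinespring gives $(\pi,V)$;
\item the spectral theorem for representations of $\mathcal{C}(X)$ gives $\tilde F$;
\item Riesz uniqueness returns you to Borel sets. This is legitimate because every finite Borel measure on the compact metric space $\widehat{\mathbb{R}}$ is regular, so the extended $E_{x,y}$ is covered.
\end{enumerate}

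The trade-off is as follows. The classical route needs no topology, compactification or regularity, and works verbatim for OVMs on an arbitrary measurable space. Yours uses heavier tools, but it fits the Stinespring framework the paper uses in Method 2 of Theorem \ref{main 1}, and it yields $\Phi(f)=V^*\pi(f)V$ for continuous $f$ along the way.

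The step you flag as the main obstacle, the mass at $\infty$, disappears for the concrete Stinespring space you describe. $\tilde F(\{\infty\})$ commutes with $\pi(\mathcal{C}(\widehat{\mathbb{R}}))$ and annihilates $V\mathcal{H}$. Hence it annihilates the dense span of $\pi(\mathcal{C}(\widehat{\mathbb{R}}))V\mathcal{H}$. So $\tilde F(\{\infty\})=0$ for this minimal dilation, and neither of your two corrections is needed.
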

A recent criterion to determine when a {\it semi-spectral measure} is {\it spectral} has been obtained by P. Pietrzycki and J. Stochel in \cite[Theorem 4.2]{pietrzyckistochelJFA} and \cite{pietrzycki2022two}.

Let $E$ be an OVM supported on  $K\subseteq\mathbb{R}$  and $n\in\mathbb{Z}_+$. \
The integral of a measurable function \(f : \mathbb{C} \rightarrow \mathbb{C}\), denoted
$
\int_{K} f(\lambda)  dE(t) \in \mathbf{B}(\mathcal{H})
$
is defined by:
\[
\langle\int_{K} f(t)  dE(t)x,\ y\rangle_{\mathcal{H}} =
\int_{K} f(t) \langle dE(t)x, y \rangle_{\mathcal{H}}
\]
for arbitrary \(x, y \in \mathcal{H}\), provided all integrals on the right-hand side converge. \ When the integral converges, the operator
\begin{equation}\label{om}
T_n = \displaystyle\int_{K}t^ndE(t)\in \mathbf{B}_h(\mathcal{H})
\end{equation}
is called the $n^{th}$ {\it operator moment} of $E$. 

In the case where $E$ is {\it finitely atomic}, given by \eqref{first},  the $n^{th}$ operator moment is:
\begin{equation*}
T_n = \displaystyle\int_{K} t^n dE(t) = \sum_{k=1}^{r} \lambda_{k}^{n} S_k.
\end{equation*}
For $P(X)=\displaystyle\prod_{k=1}^{r}(X-\lambda_k)=X^r-a_{r-1}X^{r-1}-\dots-a_{0}$, the integral of $t^nP(t)$ with respect to $E(t)=\displaystyle\sum_{k=1}^{r}S_k\delta_{\lambda_k}(t)$ yields the null operator. \ Hence, the following recursive relation holds:
\begin{equation}\label{rr}
T_{n+r}=a_{r-1}T_{n+r-1}+\dots+a_{0}T_{n} \text{ for every } n\in\mathbb{Z}_+.
\end{equation}
A  sequence ${\mathcal T}$ satisfying \eqref{rr} is called a {\it recursive sequence}. \ The associated operator moment problem will be called a {\it recursive operator moment problem}. \ In this case, several classical questions can be addressed. \ Among these questions, we pose the following:
\begin{quest}\label{q3} Is there any simple  characterization for the existence and uniqueness of a representing measure for $(T_n)_{n\in\mathbb{Z}_+}\in\mathbf{B}_h(\mathcal{H}) ^{\mathbb{Z}_+}$ satisfying \eqref{rr}?\end{quest}
\begin{quest}\label{q4} Can we give explicitly the associated representing measure?\end{quest}
\begin{quest}\label{q5} Is every operator moment sequence a limit of recursive operator moment sequences (as in the scalar case)?\end{quest}
\subsection{Operator Moment Problems and Local Operator Moment Problems}
We start this section with some examples that illustrate the difference between the operator moment problem and the local operator moment problem. \ An example of a local operator moment sequence that is not an operator moment sequence is given.
\begin{ex}
Let $\mathcal{H} = L^2([0,1])$ be the complex Hilbert space of square-integrable functions defined on the interval $[0,1]$. 

Consider the sequence of self-adjoint operators  ${\mathcal T} = (T_n)_{n\in\mathbb{Z}_+}$ defined  for every $n\in\mathbb{Z}_+$ on $\mathcal{H}$ by $T_n(f)(t) = t^n f(t)$ with  $f \in L^2([0,1])$ and  $t \in [0,1]$. \ For $f, g\in \mathcal{H}$, we have $\langle T_nf, g \rangle_{\mathcal{H}} = \int_{0}^{1} t^n f(t) \overline{g(t)} dt = \int_{0}^{1} t^n dE_{f,g}(t)$, where $E_{f,g}(B) = \langle E(B)f, g \rangle_{\mathcal{H}} = \int_{B} f(t) \overline{g(t)} dt$ for any Borel set $B \subseteq [0,1]$. \ Thus, the sequence ${\mathcal T} = (T_n)_{n\in\mathbb{Z}_+}$ is an operator moment sequence on $[0,1]$ and the corresponding representing OVM is the {\it spectral measure} given by $E(B): f \to  \chi_B \cdot f$, where $\chi_B$ is the characteristic function of the Borel set $B \subseteq [0,1]$.\\ For $\phi \in L^2([0,1])$, the positive measure associated with the  derived local operator moment sequence $\langle{\mathcal T}\phi,\phi\rangle  $ is given by,
$dE_\phi(t) =|\phi|^2dt$.
\end{ex}
\begin{ex}\label{ex2.5}
Let  $T\in\mathbf{B}_h(\mathcal{H}) $ and let $E$ be its representing spectral measure. \ The sequence ${\mathcal T} = (T^n)_{n\in\mathbb{Z}_+}$ is  an operator moment sequence on $\sigma(T)$ and equivalently ${\mathcal T}$ is  a local operator moment sequence on $\sigma(T)$. This last fact will follow from a more general result on local operator moment sequences supported in compact sets; see Theorem \ref{main 1} below.
\end{ex}
In Example \ref{ex2.5}, the sequence ${\mathcal T} = (T^n)_{n\in\mathbb{Z}_+}$ is an operator moment sequence if and only if it is a local operator moment sequence. \ Thus, the following question in connection with Problems \ref{omp} and \ref{lmp} arises naturally. 
\begin{quest}
Does every local operator moment sequence derive from an operator moment sequence?
\end{quest}
The next example in $\mathbf{B}({\mathbb C}^2) $ given in \cite[Theorem 1]{bisgaard1994positive} provides a negative answer to the previous question and shows that additional assumptions are needed to get a positive answer.
\begin{ex}\label{ex1} Let $T_0=\begin{pmatrix}
    4&0\\0&1
\end{pmatrix}$, $T_1=\begin{pmatrix}
    0&2\\2&0
\end{pmatrix}$,  $T_2=\begin{pmatrix}
    1&0\\0&4
\end{pmatrix}$ and for $n\ge 2$,  we consider $T_{2n-1}=0_2$ and $T_{2n}=2^{(n+2)!}1_2$, where $0_2$ and $1_2$ are the null $2\times2$--matrix and the identity $2\times2$--matrix on  ${\mathbb C}^2.$ \ It is easily verifiable that $\begin{pmatrix}
    T_0&T_1\\T_1&T_2
\end{pmatrix}
$ is not positive  and then  ${\mathcal T}=(T_n)_{n\in\mathbb{Z}_+}$ is not a {\it matrix moment sequence}. \ On the other hand, a direct computation of the successive determinants of the Hankel matrix associated with  $\langle{\mathcal T}x,x\rangle$  shows that $\langle{\mathcal T}x,x\rangle$ is a {\it scalar moment sequence} for every $x\in {\mathbb C}^2$. \ Hence ${\mathcal T}$ is a {\it local operator moment sequence}.    
\end{ex}

To build an {\it operator moment sequence} from a {\it local operator moment sequence}, a necessary and sufficient condition is given in the following lemma, which is a particular case of \cite[Theorem 2]{berberian1966notes}. 
\begin{lem}\label{22}
Let ${\mathcal T}=(T_n)_{n\in\mathbb{Z}_+}$ be a local operator moment sequence  and  $(\mu_x)_{x\in\mathcal{H}}$ be the family of the associated positive measures given by \eqref{el1}. \ Then,  ${\mathcal T}$ is an operator moment sequence  associated with  an operator-valued measure $E$ such that
\begin{equation}\label{3}
\mu_x(.)=\langle E(.)x,x\rangle_{\mathcal{H}} \quad \text{for } x\in\mathcal{H},
\end{equation}
if and only if the following conditions hold:
\begin{equation}\label{6}
\mu_{x+y} + \mu_{x-y} = 2 \mu_x + 2 \mu_y
\end{equation}
\begin{equation}\label{4}
 [\mu_{x + y}]^{\frac{1}{2}} \le [\mu_{x}]^{\frac{1}{2}}+[\mu_{y}]^{\frac{1}{2}} 
\end{equation}
\begin{equation}\label{2.8}
\mu_{cx} = |c|^2 \mu_x
\end{equation}
(for all $x,y\in\mathcal{H}$ and all $c\in\mathbb{C}$), and, for each $B\in\mathcal{B}(\mathbb{R})$, there exists a positive constant $k_B$ such that
\begin{equation}\label{7}
\mu_x(B) \leq k_B ||x||_\mathcal{H}^{2} \quad \text{for all } x\in\mathcal{H}.
\end{equation}
In this case,   \eqref{3} determines $E$ uniquely.
\end{lem}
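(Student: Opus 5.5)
The plan is to prove the two directions separately and then derive uniqueness from polarization.

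For necessity, assume $E$ is an OVM representing $\mathcal T$ with $\mu_x=\langle E(\cdot)x,x\rangle$. \ For a fixed Borel set $B$, the map $(x,y)\mapsto\langle E(B)x,y\rangle$ is a positive semidefinite sesquilinear form, since $E(B)\ge 0$. \ Each of the four conditions then follows directly from this form.
\begin{itemize}
\item The parallelogram law for this form gives \eqref{6}.
\item Homogeneity $\langle E(B)cx,cx\rangle=|c|^2\langle E(B)x,x\rangle$ gives \eqref{2.8}.
\item Writing $\mu_x(B)=\|E(B)^{1/2}x\|^2$ and applying the triangle inequality in $\mathcal H$ gives \eqref{4}, read setwise.
\item The estimate $\mu_x(B)\le\|E(B)\|\,\|x\|^2$ gives \eqref{7} with $k_B=\|E(B)\|$, which is finite because $E(B)\in\mathbf B(\mathcal H)$.
\end{itemize}

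For sufficiency, fix $B\in\mathcal B(\mathbb R)$ and set $q_B(x)=\mu_x(B)\ge0$. \ The plan is the Jordan--von Neumann argument. \ Conditions \eqref{6} and \eqref{2.8} make $q_B$ a quadratic form that satisfies the parallelogram law and $q_B(cx)=|c|^2q_B(x)$. \ We then define the sesquilinear candidate by polarization:
\[
s_B(x,y)=\tfrac14\bigl(q_B(x+y)-q_B(x-y)+iq_B(x+iy)-iq_B(x-iy)\bigr).
\]

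The main obstacle is showing that $s_B$ is genuinely sesquilinear. \ Hermitian symmetry and $s_B(x,x)=q_B(x)$ follow directly from \eqref{6} and \eqref{2.8}. \ Additivity in the first variable follows from the parallelogram law by the standard Jordan--von Neumann computation. \ That computation yields $\mathbb Q$-linearity, and \eqref{2.8} upgrades multiplication by $i$ as well. \ Passing from rational to real scalars needs some continuity. \ Here \eqref{4} makes $q_B^{1/2}$ a seminorm, so it satisfies the triangle inequality. \ Together with \eqref{7}, this gives $|q_B(x)^{1/2}-q_B(y)^{1/2}|\le k_B^{1/2}\|x-y\|$, hence continuity, and therefore full sesquilinearity. \ Boundedness follows from \eqref{7} and Cauchy--Schwarz, which gives $|s_B(x,y)|\le k_B\|x\|\|y\|$. \ By the Riesz representation of bounded sesquilinear forms there is a unique $E(B)\in\mathbf B(\mathcal H)$ with $\langle E(B)x,y\rangle=s_B(x,y)$, and $E(B)\ge0$ since $s_B(x,x)=\mu_x(B)\ge0$.

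It remains to check that $E$ is an OVM with the right moments. \ For countable additivity, each $E_{x,y}$ is a finite linear combination of the positive measures $\mu_{x\pm y},\mu_{x\pm iy}$, so it is a complex measure. \ Also $E_x=\mu_x$ is positive, so $E$ is an OVM with \eqref{3}. \ For the moments, polarizing $\langle T_nx,x\rangle=\int t^n d\mu_x$ gives $\langle T_nx,y\rangle=\int t^n dE_{x,y}$. \ The integrals converge because each $\mu_{x\pm y},\mu_{x\pm iy}$ has finite moments. \ Hence $\mathcal T$ is an operator moment sequence represented by $E$. \ Finally, uniqueness holds because \eqref{3} fixes every $\langle E(B)x,x\rangle$, and polarization then determines $E(B)$.
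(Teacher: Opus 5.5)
Your proof is correct. The paper gives no argument of its own beyond calling the lemma a special case of Berberian's Theorem 2, and you have supplied essentially the standard proof behind that citation: Jordan--von Neumann polarization of the parallelogram-law quadratic form $x\mapsto\mu_x(B)$, continuity from the seminorm property and $\mu_x(B)\le k_B\|x\|^2$, and Riesz representation of the bounded form, together with the polarization of $\langle T_nx,x\rangle=\int t^n\,d\mu_x$ that the paper leaves implicit.
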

\begin{rem}
   The general families of $(\mu_x)_{x\in\mathcal{V}}$, where $\mathcal{V}$ is a linear space endowed with a seminorm satisfying parallelogram law has been considered in \cite{Szafran}. \ A general selection result as in Lemma \ref{22} is established. \ We refer to \cite{Szafran1, Szafran} for further results.
\end{rem}

\subsection{Determinate and indeterminate operator moment problems}
The {\it determinacy} or the {\it indeterminacy} of a {\it moment problem} has been a cornerstone question in the moment problem since its introduction. \ Various criteria to ensure determinacy can be found in the literature. \ See \cite{akhiezer2020classical} for example. \ A simple sufficient criterion was given by T. Carleman in the scalar case (cf. \cite{akhiezer2020classical}) and extended to operator sequences in \cite{bisgaard1994positive}, as described in the next proposition.
\begin{prop}\label{carle}
Let ${\mathcal T}=(T_n)_{n\in\mathbb{Z}_+}\in\mathbf{B}_h(\mathcal{H}) ^{\mathbb{Z}_+}$ be a local operator moment sequence on $\mathbb{R}$ such that
\begin{equation}\label{carl}
    \sum_{n=1}^{+\infty} \|T_{2n}\|^{-\frac{1}{2n}} = +\infty,
\end{equation}
then $\mathcal{T}$ is a determinate operator moment sequence.
\end{prop}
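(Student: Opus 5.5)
The plan is to reduce to the scalar Carleman criterion applied to each local sequence, and then to assemble an operator-valued measure through Lemma \ref{22}.

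\textbf{Step 1 (local determinacy).} Fix $x\in\mathcal{H}$ with $x\neq 0$, and let $\mu_x$ be a representing measure for $\langle\mathcal{T}x,x\rangle$ as in \eqref{el1}. Write $s_{2n}(x)=\langle T_{2n}x,x\rangle\le \|T_{2n}\|\,\|x\|^2$. Then
\begin{equation*}
s_{2n}(x)^{-\frac{1}{2n}}\ \ge\ \|T_{2n}\|^{-\frac{1}{2n}}\,\|x\|^{-\frac{1}{n}} .
\end{equation*}
The factor $\|x\|^{-1/n}$ tends to $1$, so it is bounded below by a positive constant for large $n$. Hence \eqref{carl} forces $\sum_n s_{2n}(x)^{-1/(2n)}=+\infty$. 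By the classical Carleman criterion on $\mathbb{R}$, $\mu_x$ is the unique measure representing $\langle\mathcal{T}x,x\rangle$; for $x=0$ only $\mu_0=0$ is possible. So the family $(\mu_x)_{x\in\mathcal{H}}$ is canonically determined.

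\textbf{Step 2 (the algebraic conditions of Lemma \ref{22}).} Conditions \eqref{6} and \eqref{2.8} follow from uniqueness. The moments of $\mu_{x+y}+\mu_{x-y}$ are $\langle T_n(x+y),x+y\rangle+\langle T_n(x-y),x-y\rangle=2\langle T_nx,x\rangle+2\langle T_ny,y\rangle$, which are the moments of $2\mu_x+2\mu_y$. This sequence again satisfies Carleman, since its even moments are bounded by $\|T_{2n}\|$ times a constant. Both sides are therefore the unique representing measure, so they coincide. The same argument gives $\mu_{cx}=|c|^2\mu_x$. Condition \eqref{7} is immediate with $k_B=\|T_0\|$, because $\mu_x(B)\le\mu_x(\mathbb{R})=\langle T_0x,x\rangle\le\|T_0\|\|x\|^2$.

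\textbf{Step 3 (the triangle inequality \eqref{4}), the main obstacle.} Uniqueness alone does not produce the inequality $\mu_{x+y}(B)^{1/2}\le\mu_x(B)^{1/2}+\mu_y(B)^{1/2}$ for every Borel set $B$, so I would obtain it from positivity of a sesquilinear form. For each $B$, define $q_B(x)=\mu_x(B)$. By Step 2 it satisfies the parallelogram law and is homogeneous, so by the Jordan–von Neumann argument it comes from a Hermitian sesquilinear form $\phi_B(x,y)$ with $\phi_B(x,x)=\mu_x(B)\ge 0$. Additivity of the polarized form in each variable comes from the parallelogram law. Since $\phi_B$ is positive semidefinite, the Cauchy–Schwarz inequality holds for it, and this yields \eqref{4}.

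One technical point remains: the Jordan–von Neumann argument gives $\mathbb{Q}$-linearity, and real (hence complex) homogeneity needs some regularity. This is supplied by the bound $0\le q_B(x)\le\|T_0\|\|x\|^2$. Alternatively, one can work directly with the polarized measures $\nu_{x,y}=\frac14(\mu_{x+y}-\mu_{x-y}+i\mu_{x+iy}-i\mu_{x-iy})$. Their moments equal $\langle T_nx,y\rangle$, which is sesquilinear in $(x,y)$, and uniqueness transfers this to $\nu_{x,y}(B)$. Carleman-type determinacy for complex measures follows by splitting into real and imaginary parts and using the Jordan decomposition, noting that each of these is a combination of determinate positive measures. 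I expect this step to need the most care.

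Once \eqref{6}–\eqref{7} hold, Lemma \ref{22} produces an OVM $E$ with $\langle E(\cdot)x,x\rangle=\mu_x$. Its moments are $\langle T_nx,x\rangle$ for all $x$, and by polarization $\langle T_nx,y\rangle=\int t^n\,dE_{x,y}$, so $\mathcal{T}$ is an operator moment sequence. For uniqueness, let $F$ be any representing OVM. Each $F_x$ represents $\langle\mathcal{T}x,x\rangle$, so $F_x=\mu_x$ by Step 1, and polarization gives $F=E$. This is exactly the uniqueness statement in Lemma \ref{22}, so $\mathcal{T}$ is determinate.
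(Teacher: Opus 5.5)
Your proof is correct and follows essentially the argument the paper relies on: the paper gives no separate proof but attributes the result to Bisgaard \cite{bisgaard1994positive}, where Carleman's condition on $\|T_{2n}\|$ makes every sum $\langle\mathcal{T}x,x\rangle+\langle\mathcal{T}y,y\rangle$ determinate, which yields the parallelogram identity \eqref{6} and hence an OVM via Lemma \ref{22}, with uniqueness exactly as in Theorem \ref{cdeter}. Your Step 3 makes explicit the checks of \eqref{4} and \eqref{7} that are left implicit there; in your alternative polarized-measure route, what is actually needed is that positive combinations of the $\mu_z$ still satisfy Carleman, which your norm bound provides, since determinacy of each summand alone would not suffice (the example in Proposition \ref{indeterm} shows this).
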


For every $x\in\mathcal{H}$ and $n\in\mathbb{Z}_+$, we observe that $|\langle T_nx,x\rangle|\le \|T_{n}\|\|x\|^2$, and obtain that if $\mathcal{T}$ is an operator moment sequence satisfying \eqref{carl}, then for every nonzero $x\in\mathcal{H}$ we have $\langle\mathcal{T}x,x\rangle$ satisfies the classical Carleman condition $$\sum_{n= 1}^{+\infty} \langle T_{2n}x,x\rangle^{-\frac{1}{2n}}=+\infty. $$ \ This allows us to derive the next result.
\begin{cor}
    Let $\mathcal{T}$ be an operator moment sequence satisfying \eqref{carl}. \ Then  $\mathcal{T}$ is a determinate local operator moment sequence.
\end{cor}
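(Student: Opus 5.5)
The corollary reduces to the scalar Carleman criterion applied vector by vector. Since $\mathcal{T}$ is an operator moment sequence, it is in particular a local operator moment sequence: for each $x\in\mathcal{H}$ the measure $E_x(\cdot)=\langle E(\cdot)x,x\rangle$ represents $\langle\mathcal{T}x,x\rangle$. So only determinacy needs proof. That is, for each $x$ the scalar sequence $(\langle T_nx,x\rangle)_n$ must have a unique representing positive measure on $\mathbb{R}$. We apply the classical Carleman theorem: a scalar Hamburger moment sequence $(s_n)$ with $\sum_{n\ge1} s_{2n}^{-1/(2n)}=+\infty$ is determinate.

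\textbf{Trivial cases.} If $x=0$, or more generally if $\langle T_0x,x\rangle=0$, every representing measure is positive with total mass $\langle T_0 x,x\rangle=0$. Hence it is the zero measure, and uniqueness is immediate.

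\textbf{Main case.} Fix $x$ with $\langle T_0x,x\rangle>0$. Then $E_x$ is a nonzero positive measure, so $s_{2n}:=\langle T_{2n}x,x\rangle=\int t^{2n}\,dE_x(t)>0$ for every $n$. Indeed, $s_{2n}=0$ would force $E_x$ to be concentrated at $\{0\}$, giving $s_0=s_{2n}=0$ for $n\ge1$, which contradicts $s_0>0$. The only exception is the degenerate measure $c\,\delta_0$; in that case all $s_{2n}$ with $n\ge1$ vanish, and determinacy holds directly because $c\,\delta_0$ has compact support. From $|\langle T_{2n}x,x\rangle|\le \|T_{2n}\|\,\|x\|^2$ we get
\[
s_{2n}^{-\frac{1}{2n}}\;\ge\;\|T_{2n}\|^{-\frac{1}{2n}}\,\|x\|^{-\frac1n}.
\]
Here $\|x\|^{-1/n}\to1$, so it is bounded below by some $c>0$ for all $n\ge1$; for instance $c=\min(1,\|x\|^{-1})$. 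Therefore
\[
\sum_{n\ge1}s_{2n}^{-\frac{1}{2n}}\;\ge\; c\sum_{n\ge1}\|T_{2n}\|^{-\frac{1}{2n}}=+\infty
\]
by \eqref{carl}. The scalar Carleman criterion then shows that $E_x$ is the unique representing measure of $\langle\mathcal{T}x,x\rangle$.

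\textbf{Conclusion.} Since $x$ was arbitrary, $\mathcal{T}$ is a determinate local operator moment sequence. (Proposition \ref{carle} additionally gives operator determinacy, though that is not needed here.)

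\textbf{Main obstacle.} The only delicate point is ensuring that the quantities $s_{2n}^{-1/(2n)}$ are well defined, i.e.\ that $s_{2n}>0$. This is exactly why the zero-mass case and the $\delta_0$ case are treated separately, where uniqueness is trivial. Once those are set aside, the comparison with $\|T_{2n}\|$ is routine.
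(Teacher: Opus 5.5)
Your proof is correct and follows the paper's own argument: the bound $\langle T_{2n}x,x\rangle\le\|T_{2n}\|\,\|x\|^2$ transfers \eqref{carl} to the scalar Carleman condition for each $\langle\mathcal{T}x,x\rangle$, and the classical Carleman theorem then gives determinacy. Your separate treatment of the zero-mass and $c\,\delta_0$ cases, where some $s_{2n}$ vanish, is extra care the paper skips; however, the clause ``which contradicts $s_0>0$'' is misstated, since concentration at $\{0\}$ does not force $s_0=0$, and that is exactly why you need the $c\,\delta_0$ exception you add next.
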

 \ The last result is not true in the general case if instead of condition \eqref{carl}, we assume that $\mathcal{T}$ is {\it determinate}. \ That is, there exists a {\it determinate} operator moment sequence $\mathcal{T }$, which is not a  {\it determinate local operator moment sequence}. \ This is the goal of the next proposition. 
 \begin{prop}\label{indeterm} There exists a determinate matrix moment sequence ${\mathcal T}=(T_n)_{n\in\mathbb{Z}_+}\in\mathbf{B}(\mathbb{C}^2)_{h}^{\mathbb{Z}_+}$ such that $\langle\mathcal{T }x,x\rangle$ is indeterminate for some $x\in\mathcal{S}_{\mathbb{C}^2}$.
\end{prop}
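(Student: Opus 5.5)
The plan is to build a diagonal $2\times 2$ operator-valued measure from two determinate scalar measures whose average is indeterminate. Determinacy of the matrix moment problem then follows from determinacy of the diagonal entries together with an $L^2$-density argument for the off-diagonal entry. Indeterminacy of the local sequence at $x=\frac{1}{\sqrt2}(1,1)$ follows from a domination argument. The construction rests on classical scalar facts about N-extremal measures, which are not among the earlier results of the paper.

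\emph{Construction.} Fix an indeterminate N-extremal scalar measure $\mu$ on $\mathbb{R}$ (e.g.\ one of the Nevanlinna-extremal solutions of an indeterminate Stieltjes--Wigert or $q$-Hermite problem). Such a $\mu$ is discrete, and we pick two distinct atoms $a\neq b$ with masses $m_a,m_b>0$. We use the classical fact (M. Riesz) that removing one atom from an N-extremal measure yields a determinate measure for which the polynomials are dense in $L^2$. Thus
\[
\mu_1=\mu-m_a\delta_a,\qquad \mu_2=\mu-m_b\delta_b
\]
are both determinate, and $\mathbb{C}[X]$ is dense in $L^2(\mu_1)$. Set $E=\operatorname{diag}(\mu_1,\mu_2)$ and $T_n=\int t^n\,dE(t)$. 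Then ${\mathcal T}=(T_n)$ is a matrix moment sequence on $\mathbb{R}$.

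\emph{Determinacy of ${\mathcal T}$.} Let $F=(F_{ij})$ be any representing OVM of ${\mathcal T}$. The entries $F_{11}$ and $F_{22}$ are positive measures with the moments of $\mu_1$ and $\mu_2$, so $F_{11}=\mu_1$ and $F_{22}=\mu_2$ by scalar determinacy. For the off-diagonal entry, positivity of $F(B)$ gives $|F_{12}(B)|^2\le \mu_1(B)\mu_2(B)$ for every Borel $B$. Hence $F_{12}\ll\mu_1$, and we write $dF_{12}=g\,d\mu_1$. The next step, which I expect to be the only delicate one, is to show $g\in L^2(\mu_1)$. I would pass to densities with respect to $\sigma=\mu_1+\mu_2$: the density matrix $\begin{pmatrix} f_1&h\\ \bar h&f_2\end{pmatrix}$ is positive $\sigma$-a.e., so $|h|^2\le f_1f_2$. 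This gives
\[
\int|g|^2\,d\mu_1=\int \frac{|h|^2}{f_1}\,d\sigma\le\int f_2\,d\sigma=\mu_2(\mathbb{R})<\infty .
\]
Since the $(1,2)$ entries of all $T_n$ vanish, $\int t^n g\,d\mu_1=0$ for every $n$. So $\bar g$ is orthogonal to all polynomials in $L^2(\mu_1)$, and by density $g=0$. Hence $F=E$, and ${\mathcal T}$ is determinate.

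\emph{Indeterminacy of $\langle{\mathcal T}x,x\rangle$.} Take $x=\frac1{\sqrt2}(1,1)\in\mathcal S_{\mathbb{C}^2}$. The unique measure attached to the determinate ${\mathcal T}$ gives
\[
\langle T_nx,x\rangle=\int t^n\,d\nu,\qquad \nu=\tfrac12(\mu_1+\mu_2)=\mu-\tfrac12 m_a\delta_a-\tfrac12 m_b\delta_b\ \ge\ \tfrac12\mu .
\]
Suppose $\nu$ were determinate. By the Berg--Christensen result that a measure dominated by a determinate measure is determinate, $\tfrac12\mu\le\nu$ would force $\mu$ to be determinate, a contradiction. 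Hence $\langle{\mathcal T}x,x\rangle$ is indeterminate, which proves the proposition. (One could instead show directly that $\nu$ still has the N-extremal mass-point structure up to a bounded perturbation, but invoking the domination principle is cleaner.)
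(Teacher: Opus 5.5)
Your proof is correct, and it is essentially the paper's construction written in a diagonalizing basis. The paper takes $E=\begin{pmatrix}\mu&\nu\\ \nu&\mu\end{pmatrix}$ with $\nu=m_s\delta_s-m_t\delta_t$, which is orthogonally equivalent to $\mathrm{diag}(\mu-\nu,\mu+\nu)$, so that the local measure at $e_1$ is exactly $\mu$. You instead remove one atom in each diagonal entry, which makes determinacy of the entries a direct instance of Riesz's theorem and moves the domination principle to the indeterminacy step; the paper implicitly needs that same principle, via $\mu+\nu\le 2(\mu-m_t\delta_t)$ and $\mu-\nu\le 2(\mu-m_s\delta_s)$, to see that $\mu\pm\nu$ are determinate. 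Your $L^2(\mu_1)$-density argument for the off-diagonal entry also supplies a step the paper only asserts, namely that a diagonal matrix measure with determinate diagonal entries is itself determinate.
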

\begin{proof} In the context of \cite[Example (2.2)]{guardeno2001matrix}, let $\mu$  be an $N$--extremal measure of an indeterminate moment sequence. \ It is known that $\mu$ is discrete with mass in countably many points. \ It is also not difficult to see that, if $s, t \in supp(\mu)$ (each with positive mass), and if we let $\nu:=\mu(\{s\})\delta_s-\mu(\{t\})\delta_t$, then $ \mu \pm \nu$  are nonnegative determinate measures. \ From the identity
\[ \begin{pmatrix}
1 & -1 \\ 1 & 1
\end{pmatrix}\begin{pmatrix}
\mu & \nu \\ \nu & \mu
\end{pmatrix}\begin{pmatrix}
1 & 1 \\ -1 & 1
\end{pmatrix} = 2\begin{pmatrix}
\mu-\nu & 0 \\ 0 & \mu+\nu
\end{pmatrix}, \]
it follows that  
$E=\begin{pmatrix}
\mu & \nu \\ \nu & \mu
\end{pmatrix}$  is a determinate nonnegative matrix measure. \ 
Thus, if we denote $T_n=\displaystyle\int_\mathbb{R} t^ndE(t)$,  $ e_1 =(1,0)$, and $e_2=(0,1)$  we obtain
\begin{itemize}
    \item ${\mathcal T}=(T_n)_{n\in\mathbb{Z}_+}$ is a determinate operator moment sequence.
     \item $\langle\mathcal{T }e_1,e_1\rangle$ and $\langle\mathcal{T }e_2,e_2\rangle$  are indeterminate scalar moment sequences. 
     \item $\langle\mathcal{T }(e_1-e_2),e_1-e_2\rangle$ and $\langle\mathcal{T }(e_1+e_2),e_1+e_2\rangle$ are  determinate scalar moment sequences. 
\end{itemize}
\end{proof}
\begin{rem}
    Notice that there are bases that consist only of determinate scalar moment sequences and bases that consist only of indeterminate scalar moment sequences.\\
    Also, regarding Proposition \ref{carle}, using similarity, it is not difficult to check that Carleman's condition is not satisfied by the sequence ${\mathcal T}$.
\end{rem}
 
To derive the determinacy of an operator moment problem from the determinacy of the local one, a more precise condition is given in \cite{bisgaard1994positive}. \ We include this result for completeness.
\begin{prop}\cite[Theorem 2]{bisgaard1994positive}
    Let $\mathcal{T}$  be a local operator moment sequence. \ Suppose that 
    \begin{equation}\label{deterr}
    \langle\mathcal{T}x,x\rangle+\langle\mathcal{T}y,y\rangle 
    \mbox{ is determinate for every }    x, y\in \mathcal{H}, \end{equation} 
    then $\mathcal{T}$  is a determinate operator moment sequence.
\end{prop}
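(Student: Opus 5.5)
The plan has two parts: first produce an operator-valued measure representing $\mathcal{T}$, then show that it is the only one.

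\emph{Existence.} Specialize \eqref{deterr} to $y=0$. This gives that $\langle\mathcal{T}x,x\rangle$ is determinate for every $x$, so each $\mu_x$ in \eqref{el1} is uniquely determined by its moments. We then verify the hypotheses of Lemma \ref{22} one at a time.
\begin{itemize}
\item Condition \eqref{2.8} holds because $|c|^2\mu_x$ has the moments $\langle T_n cx,cx\rangle$, and determinacy forces $\mu_{cx}=|c|^2\mu_x$.
\item Condition \eqref{6} holds because $\mu_{x+y}+\mu_{x-y}$ and $2\mu_x+2\mu_y$ have the same moments, namely $2\langle T_nx,x\rangle+2\langle T_ny,y\rangle$. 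The second measure is $2(\mu_x+\mu_y)$, and $\mu_x+\mu_y$ is a representing measure for $\langle\mathcal{T}x,x\rangle+\langle\mathcal{T}y,y\rangle$. That sequence is determinate by \eqref{deterr}, so the two measures coincide.
\item Condition \eqref{7} holds with $k_B=\|T_0\|$, since $\mu_x(B)\le\mu_x(\mathbb{R})=\langle T_0x,x\rangle$.
\end{itemize}

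The delicate hypothesis is the triangle inequality \eqref{4}. For fixed $B$, the map $x\mapsto\mu_x(B)$ is, by \eqref{6} and \eqref{2.8}, a nonnegative function satisfying the parallelogram law and homogeneity. By the Jordan--von Neumann argument, it is the quadratic form of a sesquilinear form, provided we have continuity along real lines. We get that continuity from the bound $\mu_x(B)\le\|T_0\|\|x\|^2$ together with the parallelogram identity. A nonnegative quadratic form satisfies Cauchy--Schwarz, and Cauchy--Schwarz gives \eqref{4} directly. Lemma \ref{22} then yields an OVM $E$ with $\langle E(\cdot)x,x\rangle=\mu_x$, and this $E$ represents $\mathcal{T}$.

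\emph{Uniqueness.} Let $F$ be any OVM representing $\mathcal{T}$. For each $x$, the measure $F_x$ is a representing measure of $\langle\mathcal{T}x,x\rangle$, which is determinate, so $F_x=\mu_x=E_x$ for all $x$. Polarization then gives $F_{x,y}=E_{x,y}$, hence $F=E$.

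I expect the main obstacle to be the step from the parallelogram law to sesquilinearity, that is, the continuity argument. The fallback is to invoke the general selection result quoted from Szafran in the Remark. Note also that the existence part uses \eqref{deterr} with $y\neq 0$, which is genuinely needed because of Example \ref{ex1}.
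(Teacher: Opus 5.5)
Your proof is correct and follows the route the paper indicates in the remark after the statement: determinacy of $\langle\mathcal{T}x,x\rangle+\langle\mathcal{T}y,y\rangle$ gives the parallelogram identity \eqref{6}, Lemma \ref{22} then produces $E$, and uniqueness is exactly the argument of Theorem \ref{cdeter}. Your continuity worry is unnecessary, since Cauchy--Schwarz for the biadditive form already follows from the nonnegativity of $q(x)+2t\,b(x,y)+t^2q(y)$ at all rational $t$.

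One correction to your closing note: Example \ref{ex1} only shows that some hypothesis beyond local solvability is needed. Whether determinacy of each single $\langle\mathcal{T}x,x\rangle$ (your case $y=0$) already suffices is precisely Question \ref{q1u}, which the paper leaves open. So your claim that $y\neq 0$ is ``genuinely needed'' is unsupported.
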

\begin{rem}
\begin{enumerate}
\item In Proposition \ref{indeterm}, we have $\langle\mathcal{T }(e_1-e_2),e_1-e_2\rangle$ and $\langle\mathcal{T }(e_1+e_2),e_1+e_2\rangle$ are  determinate while $\langle\mathcal{T }(e_1-e_2),e_1-e_2\rangle+\langle\mathcal{T }(e_1+e_2),e_1+e_2\rangle$ is   indeterminate. Thus, Condition \ref{deterr} is not true without additional assumptions.
    \item   Let $\{\mu_x, \: x\in \mathcal{H}\}$  be the family of scalar representing measures associated with $\mathcal{T}$. \ It is clear that  Condition \ref{deterr} is equivalent to the polar-type identity
    $$ \mu_{x+y} + \mu_{x-y}= 2(\mu_{x}+\mu_{y} ),$$ which allows us to construct a representing operator valued measure $E$. \ On the other hand, besides assuming the compactness of the support or a Carleman-type condition that ensures the validity of Condition \ref{deterr}, it is not easy to find other local moment sequences satisfying Condition \ref{deterr}.
\item The determinacy of a given operator moment sequence is inherited from the determinacy of the associated scalar moment sequences, as we show next. \ A local operator moment sequence  $\mathcal{T}$  is said to be {\it determinate} if for every $x\in \mathcal{H}$, the scalar moment sequence $\langle\mathcal{T}x,x\rangle$ is determinate.
\end{enumerate}

\end{rem}
 \begin{thm}\label{cdeter} Every operator moment sequence that is a determinate local operator moment sequence is a determinate operator moment sequence.
 
\end{thm}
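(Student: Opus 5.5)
Suppose $\mathcal{T}$ is an operator moment sequence whose local sequences $\langle\mathcal{T}x,x\rangle$ are all determinate. We take two representing operator-valued measures $E$ and $F$ for $\mathcal{T}$ and aim to show $E=F$. For each $x\in\mathcal{H}$, the scalar measures $E_x(\cdot)=\langle E(\cdot)x,x\rangle$ and $F_x(\cdot)=\langle F(\cdot)x,x\rangle$ are positive Borel measures on $\mathbb{R}$. By \eqref{1} with $y=x$, both have the same moments $\langle T_nx,x\rangle$ for all $n\in\mathbb{Z}_+$, so both represent the scalar moment sequence $\langle\mathcal{T}x,x\rangle$. Since this sequence is determinate by hypothesis, we obtain $E_x=F_x$ for every $x\in\mathcal{H}$.

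Next we pass from the diagonal to the full sesquilinear data. By the polarization identity recalled before \eqref{11},
\[E_{x,y}=\tfrac14\bigl(E_{x+y}-E_{x-y}+iE_{x+iy}-iE_{x-iy}\bigr),\]
and the same formula holds for $F$. Hence $E_{x,y}=F_{x,y}$ for all $x,y\in\mathcal{H}$. Fixing a Borel set $B$, this says $\langle (E(B)-F(B))x,y\rangle=0$ for all $x,y$, so $E(B)=F(B)$. Therefore the representing measure is unique, and $\mathcal{T}$ is a determinate operator moment sequence.

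The only point needing care is the first step. We must check that $E_x$ is a genuine positive measure on all of $\mathbb{R}$ whose moments are finite and equal to $\langle T_nx,x\rangle$; this holds because $E$ is an OVM representing $\mathcal{T}$. We must also make sure that the notion of determinacy used for the local sequence is uniqueness among all positive measures on the relevant closed set $K$. Both $E_x$ and $F_x$ are supported on $K$, so the determinacy hypothesis applies to them directly. Uniqueness of $E$ is then also consistent with the uniqueness clause of Lemma \ref{22}, since $E$ is determined by the family $(\mu_x)=(E_x)$.
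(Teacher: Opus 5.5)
Your proposal is correct and follows essentially the same route as the paper: compare two representing OVMs $E$ and $F$, use determinacy of each $\langle\mathcal{T}x,x\rangle$ to get $\langle E(\cdot)x,x\rangle=\langle F(\cdot)x,x\rangle$, and conclude $E=F$. The only difference is that you spell out the polarization step from the diagonal measures to $E_{x,y}=F_{x,y}$, which the paper leaves implicit.
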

\begin{proof}
 Let  ${\mathcal T}=(T_n)_{n\in\mathbb{Z}_+}\in\mathbf{B}_h(\mathcal{H}) ^{\mathbb{Z}_+}$ be an operator moment sequence that is a determinate local operator moment sequence and let $E, F$ be two associated representing OVMs. \ We have
  $$ \langle T_n x,y \rangle_{\mathcal{H}}=\displaystyle\int_\mathbb{R} t^n d \langle E x,y \rangle_{\mathcal{H}} (t)= \displaystyle\int_\mathbb{R} t^n d \langle F x,y \rangle_{\mathcal{H}} (t)$$ for every $x,y\in\mathcal{H}$.
  
  Now, since  $ \langle\mathcal{T }x,x\rangle:=\big(\big<T_nx,x\big>\big)_{n\in\mathbb{Z}_+}$ is determinate, we obtain $\langle E x,x \rangle_{\mathcal{H}}=\langle F x,x \rangle_{\mathcal{H}}$ for every $x\in {\mathcal{H}}$, and then  $E=F$.
  \end{proof}
\subsection{Solubility of the operator moment problem}

Solving the operator moment problem involves finding an appropriate representing OVM supported on $K$. \ Depending on the properties of the sequence ${\mathcal T}=(T_n)_{n\in\mathbb{Z}_+}$ and the set $K$, the problem can become quite sophisticated. \ It may require specialized mathematical tools and techniques to reach a conclusion. \ It is worth noting that operator moment sequences have significant applications in various fields; see the references cited in \cite[Introduction]{pietrzycki2022two}. \ Problem \ref{omp} can indeed be further formulated as the existence of integral representations for the following linear map on $\mathbb{C}[X]$: 
\begin{equation}\label{rf}
    L_{{\mathcal T}}: P = \displaystyle\sum_{k=0}^{n}a_{k}X^{k} \longmapsto  L_{{\mathcal T}}(P)=\displaystyle\sum_{k=0}^{n}a_{k}T_k  \in\mathbf{B}(\mathcal{H}).
\end{equation}
(In the scalar case, this map is known as the Riesz functional associated with the moment problem.) \  
The above formulation helps provide a more concrete representation of the problem and the connection between operator moment sequences and the integral representations via the mapping $L_{{\mathcal T}}$. \ (For more information on the integral representation of a given linear mapping, we refer to the work of J. Cimpri\v{c} and A. Zalar \cite{cimprivc2013moment}.)

The scalar case (when $\mathcal{H} = \mathbb{C}$) has been widely investigated. \ We refer to \cite{akhiezer2020classical} and \cite{schmudgen2017moment} where  a complete description of solutions can be found.

In the matrix case (when $\mathcal{H} =\mathbb{C}^n$, $n\ge 2$), which was introduced by Krein in \cite{krein1949fundamental}, the matrix $K$--moment problem becomes significantly more challenging and complex compared to the scalar case. \ This is due to the higher dimensionality and richer structure of the matrix space, which creates various difficulties in characterizing the solutions.
\ Numerous researchers have devoted their efforts to understanding the matrix case and addressing the associated challenges. \ Some notable works in this area includes \cite{adamyan2000solution},\cite{adamyan2006general},\cite{berg2008matrix},\cite{dyukarev2009distinguished},\cite{KimWoe}, \cite{kimsey2022solution}.

    In 1947,  M.G. Krein and M.A. Krasnosel'skii in \cite{krein1947fundamental} introduced the first operator $K$--moment problems in arbitrary Hilbert spaces, namely the Hausdorff ($K=\left[-1,1\right]$) and Stieltjes ($K=\left[0,+\infty\right[$)  operator moment problems. \ They also derived the solubility conditions for these two problems. \ For the Hausdorff operator moment problem, their results are formulated identically to the scalar case. \ In \cite[Theorem D]{krein1947fundamental}, it is  shown that a sequence ${\mathcal T}= (T_n)_{n\in\mathbb{Z}_+}$ of self-adjoint operators is an operator moment sequence if and only if
\begin{equation*}
    {\mathcal T}\ge 0 \ \ and \  \  \displaystyle\sum_{i,j=0}^{n}\langle (T_{i+j}-T_{i+j+2})x_i,x_j\rangle _{\mathcal{H}}\ge 0.
\end{equation*}
Likewise, for the Stieltjes operator moment problem, Krein and Krasnoselskii established  that a necessary and sufficient solubility condition is as follows:
\begin{equation*}
     {\mathcal T}\ge 0 \ \ and \  \  \displaystyle\sum_{i,j=0}^{n}\langle T_{i+j+1}x_i,x_j\rangle _{\mathcal{H}}\ge 0,
\end{equation*}
for every  $x_0,x_1,...,x_n\in \mathcal{H}$ and $n\in\mathbb{Z}_+$.

The condition ${\mathcal T}\ge 0$ is necessary and sufficient for the solubility of the Hamburger operator moment problem when the support of the representing measure is not subject to any conditions ($K=\mathbb{R})$; for instance, see \cite{lemnete2014hamburger}.

The case  of sequences of multi-operators  on semi-algebraic compact  sets  was set and solved by  F.-H. Vasilescu in \cite{vasilescu}.
\ As in the scalar multi-variable moment sequences, the solubility is given in term of positivity of the associated Reisz functional.

If $K$ is a compact subset of $\mathbb{R}$ and $T_0=I_\mathcal{H}$, the operator $K$--moment problem \ref{omp}  can be rephrased, equivalently, in terms of a self-adjoint dilation problem, by using Naimark's Dilation Theorem \ref{pom}, as follows.
\begin{problem}[Self-adjoint dilation problem]\label{dilation} 
Let ${\mathcal T}=(T_n)_{n\in\mathbb{Z}_+}\in\mathbf{B}_h(\mathcal{H}) ^{\mathbb{Z}_+}$ with $T_0=I_\mathcal{H}$. \
Find a bounded self-adjoint operator $B$ on a Hilbert space $\mathcal{K}\supseteq \mathcal{H}$ such that: 
    \(
        \sigma(B)\subseteq K
   \)
    and
    \(
        T_n=P_\mathcal{H}B^n \mbox{ for all } n\in \mathbb{Z}_+,
   \)
   where $P_\mathcal{H}$ represents the orthogonal projection from $\mathcal{K}$ onto $\mathcal{H}$.
\end{problem}

In 1953, Sz.-Nagy solved this problem by providing a necessary and sufficient condition, which can be expressed in the form of the positivity of the linear map $L_{{\mathcal T}}$ given by \eqref{rf}.
\begin{thm}\label{Nagy}\cite[Theorem 2]{Nagy1952}
Let $K\subseteq \mathbb{R}$ be a compact set. \ A sequence  ${\mathcal T}=(T_n)_{n\in\mathbb{Z}_+}\in\mathbf{B}_h(\mathcal{H}) ^{\mathbb{Z}_+}$ with $T_0=I_\mathcal{H}$  admits a self-adjoint operator dilation
$B$ if and only if  $L_{{\mathcal T}}(P)\ge 0$ for every $P\in \mathcal{P}^+(K)$.
\end{thm}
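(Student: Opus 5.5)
For the necessity direction we assume a self-adjoint dilation $B$ on $\mathcal{K}\supseteq\mathcal{H}$ with $\sigma(B)\subseteq K$ and $T_n=P_\mathcal{H}B^n|_\mathcal{H}$. Let $F$ be the spectral measure of $B$. For $P\in\mathcal{P}^+(K)$ the functional calculus gives $P(B)=\int_K P(t)\,dF(t)\ge 0$, because $P\ge 0$ on $\sigma(B)\subseteq K$. Linearity then gives $L_{\mathcal T}(P)=P_\mathcal{H}P(B)|_\mathcal{H}$, and compressing a positive operator keeps it positive. So $\langle L_{\mathcal T}(P)x,x\rangle=\langle P(B)x,x\rangle\ge 0$ for every $x\in\mathcal{H}$.

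For sufficiency I will pass through an operator-valued measure and then apply Naimark's Theorem \ref{pom}. Assume $L_{\mathcal T}(P)\ge 0$ for all $P\in\mathcal{P}^+(K)$. Since $K$ is compact, Weierstrass' theorem says the restrictions of polynomials to $K$ are dense in $\mathcal{C}(K)$. Let $P$ be real-valued on $K$. Then $\|P\|_\infty\pm P\in\mathcal{P}^+(K)$, and $T_0=I_\mathcal{H}$ gives $-\|P\|_\infty I\le L_{\mathcal T}(P)\le\|P\|_\infty I$, hence $\|L_{\mathcal T}(P)\|\le\|P\|_\infty$. If $P$ is complex, split it into its real and imaginary parts on $K$; each part is the restriction of a polynomial, so the bound holds up to a factor $2$. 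One point needs checking: $L_{\mathcal T}(P)$ must depend only on $P|_K$. This holds because a polynomial vanishing on $K$ has real and imaginary parts $Q$ with $\pm Q\in\mathcal{P}^+(K)$, so $L_{\mathcal T}(Q)=0$. If $K$ is finite, this is exactly how the recursive relations are forced.

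With that bound, $L_{\mathcal T}$ extends to a bounded positive unital map $\Phi:\mathcal{C}(K)\to\mathbf{B}(\mathcal{H})$. For each $x$, the functional $f\mapsto\langle\Phi(f)x,x\rangle$ is positive, so the Riesz representation theorem gives a positive measure $\mu_x$ on $K$ of mass $\|x\|^2$. Polarization gives complex measures $E_{x,y}$, sesquilinear in $(x,y)$ and bounded by a constant times $\|x\|\|y\|$. For each Borel set $B$ this defines an operator $E(B)$ with $0\le E(B)\le I$, and $E$ is an OVM supported on $K$ with $E(\mathbb{R})=I$. In other words, $E$ is a semi-spectral measure representing $\mathcal{T}$; this is also the conclusion of Lemma \ref{22}, whose conditions \eqref{6}–\eqref{7} hold automatically here.

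The last step applies Theorem \ref{pom} to the semi-spectral measure $E$. It gives $\mathcal{K}\supseteq\mathcal{H}$ and a spectral measure $F$ with $E=P_\mathcal{H}F|_\mathcal{H}$. I then restrict $F$ to $K$: the dilation need not be minimal a priori, so I replace $\mathcal{K}$ by $F(K)\mathcal{K}$, which still contains $\mathcal{H}$ because $E(K)=I$. Setting $B=\int_K t\,dF(t)$ gives a bounded self-adjoint operator with $\sigma(B)\subseteq K$ and $P_\mathcal{H}B^n|_\mathcal{H}=\int_K t^n\,dE=T_n$.

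I expect the main obstacle to be the careful passage from positivity on polynomials to a well-defined bounded extension on $\mathcal{C}(K)$ (the well-definedness and norm bound above) and making sure $E$ is countably additive in the weak sense. Weak countable additivity follows from each $E_{x,y}$ being a measure.
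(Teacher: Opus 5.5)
Your argument is correct. The paper only quotes this theorem from Sz.-Nagy, but it recovers it in Section \ref{section a}. Your passage from positivity on $\mathcal{P}^+(K)$ to a bounded positive unital extension $\Phi$ on $\mathcal{C}(K)$, and from there to a semi-spectral measure, is Method~1 of Theorem \ref{main 1}. Your use of Theorem \ref{pom} is the Naimark rephrasing the paper mentions just before Problem \ref{dilation}.

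Where you differ is in how $B$ is produced. Remark \ref{rs}(2) uses Method~2 instead:
$\Phi$ is completely positive because $\mathcal{C}(K)$ is commutative, and Stinespring gives $\Phi(f)=V^*\pi(f)V$ with $V$ an isometry because $\Phi(1)=T_0=I_\mathcal{H}$. Then $B=\pi(\chi)$ is self-adjoint with $\sigma(B)\subseteq\sigma(\chi)=K$ automatically, since $\pi$ is a unital $*$-homomorphism. That route needs no measure and no cutting down of the dilation space. Yours needs the extra compression to $F(K)\mathcal{K}$, which you justify correctly: $\|F(K)h\|^2=\langle E(K)h,h\rangle=\|h\|^2$ forces $F(K)h=h$. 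In exchange, your route exhibits the representing semi-spectral measure explicitly and uses only the scalar Riesz theorem and Naimark's theorem.

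Two assertions deserve a line each:
\begin{enumerate}
\item Positivity of $\Phi$ does not follow from the norm bound alone. For $f\ge 0$, take polynomials $P_n$ with $\|P_n-f\|_\infty<1/n$; then $\mathrm{Re}\,P_n+1/n\in\mathcal{P}^+(K)$ converges uniformly to $f$ on $K$.
\item Sesquilinearity of $(x,y)\mapsto E_{x,y}$ follows from uniqueness in the Riesz theorem, because $\int_K f\,dE_{x,y}=\langle\Phi(f)x,y\rangle$ is sesquilinear for every $f$.
\end{enumerate}
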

Consider now the case of ${\mathcal T}=(T_n)_{n\in\mathbb{Z}_+}\in\mathbf{B}_h(\mathcal{H}) ^{\mathbb{Z}_+}$, with invertible $T_0\ne I_\mathcal{H}$. \ Letting $\tilde{{\mathcal T}}:=(T_0^{-\frac{1}{2}}T_nT_0^{-\frac{1}{2}})_{n\in\mathbb{Z}_+}$ and observing that  $L_{{\mathcal T}}(P)\ge 0$  if and only if $L_{\tilde{{\mathcal T}}}(P)\ge 0$, we obtain, using the previous theorem, the following representation of ${\mathcal T}=(T_n)_{n\in\mathbb{Z}_+}\ge 0$ in the form:
\begin{equation}\label{111}
T_n=T_{0}^{\frac{1}{2}}P_\mathcal{H}B^nT_{0}^{\frac{1}{2}} \text{ for all } n\in\mathbb{Z}_+.
\end{equation}
Notice, however, that equation \eqref{111} does not give an integral representation of $T_0$.

\section{Local Operator Moment Sequences on Compact Sets}\label{section a}

As in the scalar case \cite[Proposition 4.1]{schmudgen2017moment}, the following proposition presents a characterization of OVMs supported on a compact interval $\left[-M, M\right]$ in relation to their operator moment sequences.
\begin{prop}\label{comp}
Let ${\mathcal T}=(T_n)_{n\in\mathbb{Z}_+}\in\mathbf{B}_h(\mathcal{H}) ^{\mathbb{Z}_+}$ be an operator moment sequence on $\mathbb{R}$ and $E$  a representing {\rm OVM} of ${\mathcal T}$. \ For  $M>0$, the following statements are equivalent:
\begin{enumerate}
    \item $\text{supp}(E)\subseteq [-M,M]$;
    \item There exists a constant $C>0$ such that $\|T_n\|\le CM^n$ for every $n\in\mathbb{Z}_+$;
    \item There exists a constant $C>0$ such that $\|T_{2n}\|\le CM^{2n}$ for every $n\in\mathbb{Z}_+$;
    \item L:=$\displaystyle\lim\inf_{n\to +\infty}\|T_{2n}\|^{\frac{1}{2n}}\le M.$
\end{enumerate}
\end{prop}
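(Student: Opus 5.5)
We prove the cycle (1)$\Rightarrow$(2)$\Rightarrow$(3)$\Rightarrow$(4)$\Rightarrow$(1), reducing at each step to the scalar measures $E_x=\langle E(\cdot)x,x\rangle$ and using \eqref{11} to relate $\mathrm{supp}(E)$ to the supports of the $E_x$.

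For (1)$\Rightarrow$(2), assume $\mathrm{supp}(E)\subseteq[-M,M]$. For $x\in\mathcal{H}$ we have $|\langle T_nx,x\rangle|\le\int_{[-M,M]}|t|^n\,dE_x(t)\le M^n E_x([-M,M])=M^n\langle T_0x,x\rangle\le M^n\|T_0\|\,\|x\|^2$. Since $T_n$ is self-adjoint, $\|T_n\|=\sup_{\|x\|=1}|\langle T_nx,x\rangle|$. Hence $\|T_n\|\le CM^n$ with $C=\max(\|T_0\|,1)$; the value $1$ only guards against the degenerate case $T_0=0$. The implication (2)$\Rightarrow$(3) is immediate. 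For (3)$\Rightarrow$(4), take $2n$-th roots to get $\|T_{2n}\|^{1/(2n)}\le C^{1/(2n)}M\to M$, so the $\liminf$ is at most $M$.

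The substantive step is (4)$\Rightarrow$(1). Suppose $L\le M$ but $\mathrm{supp}(E)\not\subseteq[-M,M]$. By \eqref{11} there is a unit vector $x$ with $\mathrm{supp}(E_x)\not\subseteq[-M,M]$. Then there exist $\varepsilon>0$ and a Borel set $B\subseteq\{|t|\ge M+\varepsilon\}$ with $\delta:=E_x(B)>0$: either $(M+\varepsilon,\infty)$ or $(-\infty,-M-\varepsilon)$ meets the support in an open set of positive measure. Since $t^{2n}\ge0$ and $E_x$ is positive,
\[
\|T_{2n}\|\ge\langle T_{2n}x,x\rangle=\int t^{2n}\,dE_x(t)\ge\delta(M+\varepsilon)^{2n}.
\]
Taking $2n$-th roots gives $\|T_{2n}\|^{1/(2n)}\ge\delta^{1/(2n)}(M+\varepsilon)\to M+\varepsilon$, so $L\ge M+\varepsilon>M$, a contradiction.

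The main obstacle is the justification at the start of (4)$\Rightarrow$(1), namely passing from $\mathrm{supp}(E)\not\subseteq[-M,M]$ to a single $x$ whose scalar measure has positive mass outside a slightly larger interval. This requires identity \eqref{11} together with the elementary fact that a point of $\mathrm{supp}(E_x)$ with $|t_0|>M$ has every neighbourhood of positive $E_x$-measure. Choosing $\varepsilon<(|t_0|-M)/2$ and a small neighbourhood of $t_0$ produces the required set $B$.
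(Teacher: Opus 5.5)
Your proof is correct and follows essentially the same route as the paper: the same cycle of implications, with (1)$\Rightarrow$(2) via $\|T_n\|=\sup_{\|x\|=1}|\langle T_nx,x\rangle|$, and (4)$\Rightarrow$(1) resting on the bound $\alpha^{2n}E_x([-\alpha,\alpha]^c)\le\langle T_{2n}x,x\rangle$. The only differences are cosmetic. The paper takes the supremum over unit vectors to get $\alpha^{2n}\|E([-\alpha,\alpha]^c)\|\le\|T_{2n}\|$, which avoids \eqref{11} and gives $\mathrm{supp}(E)\subseteq[-L,L]$ directly, whereas you argue by contradiction at a single vector; and your $C=\max(\|T_0\|,1)$ handles the degenerate case $T_0=0$, where the paper's $C=\|T_0\|$ is not strictly positive.
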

\begin{proof}
 $1 \Rightarrow 2$. \ For $x\in\mathcal{S}_{\mathcal{H}}$, we have $\langle T_nx,x\rangle_{\mathcal{H}} =\displaystyle \int_{[-M,M]} t^nd\langle Ex,x\rangle_{\mathcal{H}}(t)$, and therefore, $\|T_n\|= \sup\limits_{x\in \mathcal{S}_{\mathcal{H}} }|\langle T_nx,x\rangle_{\mathcal{H}}|  \leq C M^n$, where $C=\|T_0\|$.\\
  $2 \Rightarrow 3 \Rightarrow 4$. \ Clear.\\
 $4 \Rightarrow 1$. \ Let $x\in\mathcal{S}_\mathcal{H}$ and $n\in\mathbb{N}$. \ For every $\alpha >0$, we have
\[
\alpha^{2n}E_x([-\alpha, \alpha]^c) \leq \int_{[-\alpha, \alpha]^c}t^{2n}dE_x(t)\le \int_{\mathbb{R}}t^{2n}dE_x(t)= \langle T_{2n}x, x \rangle_{\mathcal{H}}.
\]
Which implies that
\[
\alpha^{2n}\sup_{x\in\mathcal{S}_\mathcal{H}}E_x([-\alpha, \alpha]^c) = \alpha^{2n}\|E([-\alpha, \alpha]^c)\| \leq \sup_{x\in \mathcal{S}_\mathcal{H}} \langle T_{2n}x, x \rangle_{\mathcal{H}} = \|T_{2n}\|.
\] Then,  $\alpha\|E([-\alpha, \alpha]^c)\|^\frac{1}{2n} \le\|T_{2n}\|^\frac{1}{2n}$. \
In particular, for every $\alpha$ such that  $\|E([-\alpha, \alpha]^c)\|>0$,  we obtain $\alpha \le L$. \ Therefore, $E([-\alpha, \alpha]^c)=0_\mathcal{H}$ when $\alpha > L$, which implies that $\text{supp}(E)\subseteq [-L,L]\subseteq [-M,M]$ and proves 1.
\end{proof}

With regard to Theorem \ref{cdeter}, the following   question   arises naturally:
\begin{quest}\label{q1u} Let ${\mathcal T} = (T_n)_{n\in\mathbb{Z}_+}$ be a determinate local operator moment sequence. \ Is it true that   the sequence $(T_n)_{n\in\mathbb{Z}_+}$ is an operator moment sequence?
 \end{quest}
In its full generality, we are not able to find an answer. \ In the case of compactly supported measures in which determinacy is automatic, we have the following theorem.
\begin{thm}\label{main 1}Let ${\mathcal T}=(T_n)_{n\in\mathbb{Z}_+}\in\mathbf{B}_h(\mathcal{H}) ^{\mathbb{Z}_+}$ and let $K$ be a compact set of $\mathbb{R}$. \ The following statements are equivalent:
\begin{enumerate}
    \item ${\mathcal T}$ is an operator $K$--moment sequence;
    \item ${\mathcal T}$ is a local operator $K$--moment sequence.
\end{enumerate}
It follows in particular that for every local operator $K$--moment sequence ${\mathcal T}$,  the scalar representing measures $(\mu_x)_{x\in\mathcal{H}}$  satisfy   \eqref{6}, \eqref{4} and  \eqref{2.8}.
\end{thm}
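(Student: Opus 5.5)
The direction $(1)\Rightarrow(2)$ is immediate: if $E$ is a representing OVM supported on $K$, then for each $x\in\mathcal{H}$ the positive measure $\mu_x:=E_x=\langle E(\cdot)x,x\rangle$ is supported on $K$ and represents $\langle{\mathcal T}x,x\rangle$. For $(2)\Rightarrow(1)$ I would use uniqueness of compactly supported scalar measures to linearize. By Weierstrass, polynomials are dense in $\mathcal{C}(K)$, so a positive measure on the compact set $K$ is determined by its moments. Hence for each $x$ the measure $\mu_x$ is unique. It is also determined by the linear functional $P\mapsto \langle L_{{\mathcal T}}(P)x,x\rangle$ on $\mathcal{P}(K)$, which extends continuously to $\mathcal{C}(K)$ as integration against $\mu_x$.

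The key step is to verify the hypotheses of Lemma \ref{22}. For \eqref{6}, take any polynomial $P$. Since $x\mapsto\langle L_{{\mathcal T}}(P)x,x\rangle$ is a quadratic form, the parallelogram law gives
\[
\int_K P\,d(\mu_{x+y}+\mu_{x-y})=\int_K P\,d(2\mu_x+2\mu_y).
\]
Both sides are positive measures on $K$ with the same moments, so they coincide by determinacy. Condition \eqref{2.8} follows in the same way from $\langle L_{{\mathcal T}}(P)cx,cx\rangle=|c|^2\langle L_{{\mathcal T}}(P)x,x\rangle$. For \eqref{7}, note that $\mu_x(B)\le\mu_x(K)=\langle T_0x,x\rangle\le\|T_0\|\,\|x\|^2$, so we may take $k_B=\|T_0\|$.

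The main obstacle is the triangle-type inequality \eqref{4}, which I read setwise: $\mu_{x+y}(B)^{1/2}\le\mu_x(B)^{1/2}+\mu_y(B)^{1/2}$. I would avoid checking it directly and instead build $E$ by polarization. Define the complex measures
\[
\mu_{x,y}:=\tfrac14(\mu_{x+y}-\mu_{x-y}+i\mu_{x+iy}-i\mu_{x-iy}).
\]
By the uniqueness argument, each $\mu_{x,y}$ is the unique measure on $K$ whose moments are $\langle T_nx,y\rangle$. Uniqueness also makes $(x,y)\mapsto\mu_{x,y}(B)$ sesquilinear, because both sides of each linearity identity are measures on $K$ with equal moments. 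It is moreover bounded by $\|T_0\|\,\|x\|\|y\|$, since the total variation of $\mu_{x,y}$ is at most $\tfrac14\sum\mu_{x+i^ky}(K)$, which is controlled by $\|T_0\|$. By Riesz representation, there is a unique $E(B)\in\mathbf{B}(\mathcal{H})$ with $\langle E(B)x,y\rangle=\mu_{x,y}(B)$. Then $\langle E(B)x,x\rangle=\mu_x(B)\ge0$, countable additivity holds weakly, and $\int_K t^n\,dE_{x,y}=\langle T_nx,y\rangle$, so $E$ is a representing OVM supported on $K$, which is $(1)$.

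Once $E$ exists, \eqref{4} follows: $\mu_x(B)=\|E(B)^{1/2}x\|^2$ and the norm triangle inequality applies. This also yields the final sentence of the theorem, since $\mu_x=E_x$ and \eqref{6}, \eqref{4}, \eqref{2.8} hold for $E_x$ by Lemma \ref{22}, using the uniqueness of $\mu_x$ once more.
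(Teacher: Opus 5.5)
Your proof is correct, but it takes a different route from the two arguments the paper writes out. In Method 1 the paper uses positivity of $L_{\mathcal T}$ on $\mathcal{P}^+(K)$ to extend it by density to a positive map $\tilde{L}_{\mathcal T}:\mathcal{C}(K)\to\mathbf{B}(\mathcal{H})$ with $\|\tilde{L}_{\mathcal T}(f)\|\le\|T_0\|\|f\|_\infty$, and then quotes Berberian's operator-valued Riesz representation theorem. Method 2 uses complete positivity and Stinespring's theorem to write $T_n=V^*B^nV$ with $B$ self-adjoint and $\sigma(B)\subseteq K$, and then compresses the spectral measure of $B$.

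You skip the extension of $L_{\mathcal T}$ entirely, because the given measures $\mu_x$ already supply it on the diagonal. The only input you need is that a complex measure on compact $K$ is determined by its moments. That fact forces the parallelogram law and homogeneity, and it makes the polarized measures $\mu_{x,y}$ sesquilinear in $(x,y)$. The Riesz lemma for bounded sesquilinear forms then produces $E(B)$. In effect you reprove by hand the special case of Berberian's theorem that is needed. Your argument is the compact-case version of the determinacy (Bisgaard-type) route that the paper only mentions in passing: determinacy of the sums $\langle\mathcal{T}x,x\rangle+\langle\mathcal{T}y,y\rangle$ is exactly what your uniqueness step uses.

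Your approach is more elementary, and it isolates determinacy as the only reason the local and global problems coincide on compact sets. The paper's Method 2 gives in addition the self-adjoint dilation of Remark \ref{rs}(2), which you would recover only after a further appeal to Theorem \ref{pom}.

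Two small points. First, the total-variation estimate you give yields $|\mu_{x,y}(B)|\le\|T_0\|(\|x\|^2+\|y\|^2)$, not $\|T_0\|\|x\|\|y\|$. Boundedness is all you need, but to get a product bound use either rescaling (constant $2\|T_0\|$) or Cauchy--Schwarz for the positive form $(x,y)\mapsto\mu_{x,y}(B)$ (constant $\|T_0\|$). Second, since you never actually invoke Lemma \ref{22}, your separate verification of \eqref{6} and \eqref{7} is not needed for existence.
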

The implication $1 \Rightarrow 2$ is obvious. \ There are several ways to prove $2 \Rightarrow 1$, with the simplest one being to use the argument of Bisgaard \cite[Theorem 2]{bisgaard1994positive} or the results in the recent paper by Cimpri\v{c} and Zalar \cite[Theorem 4 and Remark 6]{cimprivc2013moment}.

Alternatively, since for a compactly supported operator moment sequence, we have $\|T_n\|\le \displaystyle\sup(K)^n\|T_0\|$, it follows that Carleman's Condition \eqref{carl} holds, and then $2 \Rightarrow 1$ is recovered by using Proposition \ref{carle}.

In addition, we will provide two other proofs of independent interest by using different approaches. \ The first one is based on the operator-valued version of F. Riesz's representation \cite[Theorem 19]{berberian1966notes}, while the second relies on the theory of completely positive maps described in \cite{bhat2023operator}.
\begin{proof}[Method 1]
Assume that condition 2 in Theorem \ref{main 1} is true. \ For  $x\in\mathcal{H}$, there is a positive finite measure $E_x$ supported on $K$ such that $\langle T_nx,x\rangle=\displaystyle\int_{K}t^ndE_x(t)$ \; for every $n\in\mathbb{Z}_+$. \
The linear map $L_x$ defined on $ \mathcal{P}(K)$ by  $L_x(X^n):=\langle T_nx,x\rangle_\mathcal{H}$ for all $n \in\mathbb{Z}_+$, satisfies
\begin{equation}\label{aa}
L_x(P)=\displaystyle\int_{K}P(t)dE_x(t)\ge 0 \quad \text{whenever } P\in \mathcal{P}^+(K).
\end{equation}
Now, we consider a linear map $L_{{\mathcal T}}:\mathcal{P}(K)\to\mathbf{B}_h(\mathcal{H})$ defined as $L_{{\mathcal T}}(X^n)=T_n$ for all $n \in\mathbb{Z}_+$. \ Then, \eqref{aa} is equivalent to
\begin{equation*}
L_{{\mathcal T}}(P)\ge 0  \text{ whenever } P\in \mathcal{P}^+(K).
\end{equation*}
 By the Stone-Weierstrass Theorem, $\mathcal{P}(K)$ is dense in $\mathcal{C}(K)$,  since $\mathcal{P}(K)$ is a $*$--subalgebra of $\mathcal{C}(K)$ and it separates points of $K$. \ As a result\footnote{From \cite[Exercise 2.2 page 21]{paulsen2002completely}.}, $L_{{\mathcal T}}$ can be extended to a positive linear map $\tilde{L}_{{\mathcal T}}$ on the $C^*$--algebra $\mathcal{C}(K)$, such that\footnote{From \cite[Theorem 2.4]{berberian1966notes}.} $\|\tilde{L}_{{\mathcal T}}(f)\|\le \|T_0\|\|f\|_{\infty}$ for every $f\in\mathcal{C}(K)$. \ Therefore, by \cite[Theorem 19]{berberian1966notes}, there exists an operator-valued measure $E$ supported on $K$ such that $\langle T_nx,x\rangle_{\mathcal{H}}=\int_{K}t^nd\langle Ex,x\rangle_{\mathcal{H}}(t)$ for every $x\in\mathcal{H}$. \
Moreover, by uniqueness, $E_x=\langle Ex,x\rangle_{\mathcal{H}}$ for every $x\in\mathcal{H}$.
\end{proof}
\begin{proof}[Method 2] By employing the first part of Method 1 and Stinespring's Theorem \cite[Theorem 3.11]{paulsen2002completely}, we can establish that the extension $\tilde{L}$ of $L$ is completely positive. \ Additionally, applying Stinespring's Dilation Theorem \cite[Theorem 4.1]{paulsen2002completely}, we can conclude the existence of a Hilbert space $\mathcal{K}$, a bounded operator $V:\mathcal{H}\to\mathcal{K}$, and a unital $*$--homomorphism $\pi:\mathcal{C}(K)\to\mathbf{B}(\mathcal{K})$ such that:
$L(f)=V^*\pi(f)V$, for all $f\in\mathcal{C}(K)$. \
Let $B=\pi(\chi)$, where $\chi(t):=t$ for every $t \in K$. \ Since $K\subseteq \mathbb{R}$, we have $B^*=\pi(\chi)^*=\pi(\chi^*)=\pi(\chi)=B$. \ This implies that $T_n=L(\chi^n)=V^*\pi(\chi^n)V=V^*\pi(\chi)^nV=V^*B^nV \mbox{ for all } n\in\mathbb{Z}_+.$ \ Now, let $F$ be a spectral measure of the adjoint operator $B$. \ Then, we have $\big<T_nx,y\big>_{\mathcal{H}}=\int_{K}t^nd\big<E x,y\big>_{\mathcal{H}}(t)
\mbox{ for all } x,y\in\mathcal{H} \mbox{ and } n\in\mathbb{Z}_+,$ where $E(\cdot)=V^*F(\cdot)V$ is a OVM  from Theorem \ref{pom}. \
It is clear that $supp(E)\subseteq supp(F)=\sigma(B)=K$. \ Thus, the proof is complete.
\end{proof}
\begin{rem}\label{rs}
\begin{enumerate}
\item  From the previous theorem, we observe that in Theorem \ref{main 1}, if $K$ is a compact set of $\mathbb{R}$, then the conditions \eqref{6}, \eqref{4}, and \eqref{2.8} are automatically satisfied. 
\item Using the notations of Method 2, when $T_0=I_\mathcal{H}$, the operator $V$ becomes an isometry. \ In this particular case, we can identify the Hilbert space $\mathcal{H}$ with the subspace $V(\mathcal{H})$ of $\mathcal{K}$. \ Consequently, under this identification, the adjoint operator $V^*$ corresponds to the orthogonal projection of $\mathcal{K}$ onto $\mathcal{H}$, represented as $P_\mathcal{H}$. \ As a result, we have the relationship $T_n=P_\mathcal{H}B^n$. \ Thus, we can conclude that the operator moment problem \ref{omp} on a compact set $K\subseteq \mathbb{R}$ is equivalent to the self-adjoint dilation problem (Problem \ref{dilation}).
\item  Now, we relax the condition that  ${\mathcal T}$ is a local operator moment sequence on prescribed compact set $K$ by replacing with the condition that $\langle{\mathcal T}x,x\rangle$ is an operator moment sequence on some compact $K_x$,  for every $x \in {\mathcal H}$. \ It will follow that $\langle{\mathcal T}x,x\rangle+\langle{\mathcal T}y,y\rangle$ is a determinate moment sequence for every $x,y \in {\mathcal H}$ and then ${\mathcal T}$ is a determinate  operator moment sequence. \ In this case, it would be interesting to know whether ${\mathcal T}$ necessarily admits a compactly supported representing measure. 
\end{enumerate}
\end{rem}
\section{Subnormal Operator Weighted Shifts and the Operator Moment Problem}
Let  $\mathcal{A} = \{A_n\}_{n=0}^{+\infty}$ be a sequence of non-negative invertible operators on $\mathcal{H}$ satisfying 
$\displaystyle\sup_{n\in\mathbb{Z}_+}\|A_n\|<+\infty$. 
\\The  operator  weighted shift  $W_\mathcal{A}$ associated with  $\mathcal{A}$ is defined  on the Hilbert space \( 
\ell^2(\mathcal{H}): = \left\{ (x_n)_{n\geq 0} \in \mathcal{H}^{{\mathbb Z}_+}  \mid \sum\limits_{n=0}^{+\infty} \|x_n\|_{\mathcal{H}}^2 < \infty \right\}\)   by:
\[
W_\mathcal{A}(x_0, x_1, \ldots) := (0, A_0x_0, A_1x_1, \ldots).
\]
The moment sequence of $W_\mathcal{A}$ is given by $\mathcal{B}_{\mathcal{A}}: = \{B_{n}^*B_n\}_{n=0}^{+\infty}$, where $B_0 = I_{\mathcal{H}}$ and $B_n = A_{n-1}B_{n-1}$ for $n\in 
 \mathbb{N}$. \  Conversely, given any sequence of invertible operators $(B_n)_{n\in\mathbb{Z}_+}$, there exists an operator weighted shift $W_\mathcal{A}$ such that $\mathcal{B} = \{B_{n}^*B_n\}_{n=0}^{\infty} = \mathcal{B}_{\mathcal{A}}$. \ It suffices to define $\mathcal{A}$ by the expression $A_n: = B_{n+1}B_{n}^{-1}$.\\

The moment sequence of an operator-weighted shift encodes several of its spectral properties as in the scalar case. \ In particular, we have the next useful characterization of subnormal operator weighted shifts provided in \cite[Theorem 1]{ghatage1976subnormal}. \ See also \cite{lambert1976subnormality} for further results.
\begin{thm}\label{sub} Let $W_\mathcal{A}$ be an operator weighted shift. \ Then, $W_\mathcal{A}$ is subnormal if and only if $\mathcal{B}_{\mathcal{A}}$ is an operator moment sequence on  $[0,\|W_\mathcal{A}\|^2].$
\end{thm}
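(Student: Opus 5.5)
The plan is to reduce the statement to the structure of a subnormal operator and to Theorem \ref{main 1} together with Proposition \ref{comp}. Write $W=W_\mathcal{A}$ and embed $\mathcal{H}$ in $\ell^2(\mathcal{H})$ as the zeroth coordinate. A direct computation gives $W^n(x,0,0,\ldots)=(0,\ldots,0,B_nx,0,\ldots)$. Hence $\langle W^{*n}W^n x,x\rangle=\langle B_n^*B_nx,x\rangle$, and more generally
\[
W^{*m}W^n(x_k)_k=\big(B_k^{*}B_{m+k}^{*-1}\cdots\big)
\]
is computed coordinatewise through the products of the weights.

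\textbf{Necessity.} Assume $W$ is subnormal and let $N$ be its minimal normal extension on $\mathcal{K}\supseteq\ell^2(\mathcal{H})$, with $\|N\|=\|W\|$ and spectral measure $F$ supported in the disc of radius $\|W\|$. Then $W^{*n}W^n=P N^{*n}N^n|_{\ell^2(\mathcal{H})}$. Consequently
\[
\langle B_n^*B_nx,x\rangle=\int |z|^{2n}\,dF_{x,x}(z)=\int_{[0,\|W\|^2]} t^n\, d\nu_x(t),
\]
where $\nu_x$ is the image of $F_{x,x}$ under $z\mapsto|z|^2$. So $\mathcal{B}_\mathcal{A}$ is a local operator $[0,\|W\|^2]$-moment sequence, and Theorem \ref{main 1} upgrades it to an operator moment sequence. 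Alternatively, one can write $E(\cdot)=V^*F(\{z:|z|^2\in\cdot\})V$ directly, with $V$ the inclusion of the zeroth slot.

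\textbf{Sufficiency.} Assume $B_n^*B_n=\int t^n\,dE(t)$ with $E$ supported on $[0,\|W\|^2]$. I would verify the Lambert/Bram--Halmos criterion for subnormality, namely positivity
\[
\sum_{i,j}\langle W^{i}x_j,W^{j}x_i\rangle\ge0
\]
for finitely supported families. Since $W$ shifts degrees, the vectors $W^i x_j$ decompose along the coordinates of $\ell^2(\mathcal{H})$. On the vectors supported at the zeroth coordinate this reduces to positivity of the Hankel forms $\sum\langle B_{i+j}^*B_{i+j}y_i,y_j\rangle$, which follows immediately from the integral representation. For a general coordinate $k$, substitute $y_i\mapsto B_k^{-1}$-type transforms. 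Using invertibility of the weights, the $k$-th coordinate moments are
\[
B_k^{*-1}B_{n+k}^*B_{n+k}B_k^{-1},
\]
which equal $\int t^{n}\,d\big(B_k^{*-1}t^kE B_k^{-1}\big)$. These are again operator moment sequences with positive representing measures, so each block form is positive. The sum over coordinates then gives positivity. Finally, boundedness, i.e.\ the Bram growth condition, holds since $W$ is bounded.

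\textbf{Main obstacle.} The hardest step is organizing the Bram--Halmos positivity sum so that it genuinely decouples into coordinate blocks; cross terms between different $k$ must be checked to vanish or to be absorbed. If that bookkeeping fails, I would instead construct the normal extension explicitly. Take $\mathcal{K}=L^2(E)$-type space (the Naimark dilation of Theorem \ref{pom} applied to $E$), with $M_z$ acting on $\mathcal{H}$-valued functions on the disc after a rotation-invariant lift. Then identify $W$ with the restriction of $M_z$ to the closed span of $\{z^nx\}$ via the unitary $(x_n)\mapsto\sum z^nB_n^{-1}x_n$-type map, whose isometric property is exactly the moment identity.
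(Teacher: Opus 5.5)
The paper gives no proof of this statement; it is quoted from Ghatage \cite[Theorem 1]{ghatage1976subnormal}, so your argument has to stand on its own.

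\textbf{Necessity.} This direction is fine. For $x,y$ in the zeroth slot, $\langle B_n^*B_nx,y\rangle=\langle N^nx,N^ny\rangle$, so $E(\cdot)=V^*F(\{z:|z|^2\in\cdot\})V$ represents $\mathcal{B}_\mathcal{A}$ on $[0,\|W\|^2]$.

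\textbf{Sufficiency.} As written, this direction has a genuine gap: the decoupling you describe does not hold for the Halmos--Bram form. Write $ue_k$ for $u\in\mathcal{H}$ placed in the $k$-th slot. Then $W^i(ue_k)=B_{k+i}B_k^{-1}u\,e_{k+i}$, so $\langle W^ix_j,W^jx_i\rangle$ pairs the $k$-th coordinate of $x_j$ with the $(k+i-j)$-th coordinate of $x_i$. For $i\neq j$ these are genuine cross terms between different coordinates; they neither vanish nor fit into your blocks. Even your base case fails: if every $x_j=u_je_0$, the form equals $\sum_i\|B_iu_i\|^2$, not the Hankel form $\sum_{i,j}\langle B_{i+j}^*B_{i+j}u_i,u_j\rangle$. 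Hence the blocks built from $B_k^{*-1}B_{n+k}^*B_{n+k}B_k^{-1}$, with one substitution $B_k^{-1}$ per coordinate, are not blocks of this form. The step \emph{the sum over coordinates then gives positivity} therefore does not follow.

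\textbf{How to repair it.} Your computation is exactly the right one for Embry's criterion: $S$ is subnormal iff $\sum_{i,j}\langle S^{i+j}x_i,S^{i+j}x_j\rangle\ge 0$ for all finite families. It is also right for Lambert's criterion: $\|S^nx\|^2$ is a Stieltjes moment sequence for every $x$, cf.\ \cite{lambert1976subnormality}. In both, the same power acts on both entries, so coordinate $k$ meets only coordinate $k$, and
\[
\sum_{i,j}\langle W^{i+j}x_i,W^{i+j}x_j\rangle=\sum_k\sum_{i,j}\big\langle B_k^{*-1}B_{k+i+j}^*B_{k+i+j}B_k^{-1}x_i^{(k)},x_j^{(k)}\big\rangle .
\]
Each inner sum is a Hankel form of the moment sequence of the positive OVM $t^kB_k^{*-1}E(\cdot)B_k^{-1}$; positivity uses $\operatorname{supp}E\subseteq[0,\infty)$.

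If you want to keep Halmos--Bram, group terms by $d=k-j$ instead of by $k$, and put $y_j=B_{j+d}^{-1}x_j^{(j+d)}$. The $d$-block then becomes a Hankel form of $(B_{n+|d|}^*B_{n+|d|})_n$, which is positive.

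Your fallback model also works once made concrete:
\begin{itemize}
\item[(i)] Dilate $E=V^*FV$ with $F$ spectral and supported in $[0,\|W\|^2]$, and set $C=\int t\,dF\ge 0$.
\item[(ii)] Take the normal operator $N=C^{1/2}\otimes M_{e^{i\theta}}$ on $\mathcal{K}_0\otimes L^2(\mathbb{T})$, and define $U(x_n)_n=\sum_n C^{n/2}VB_n^{-1}x_n\otimes e^{in\theta}$.
\item[(iii)] $U$ is isometric because $V^*C^nV=B_n^*B_n$.
\item[(iv)] $UW=NU$ because $B_{n+1}^{-1}A_n=B_n^{-1}$.
\end{itemize}
This exhibits $W$ as the restriction of $N$ to an invariant subspace, which gives subnormality directly without any positivity criterion.
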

Now, because $[0,\|W_\mathcal{A}\|^2]$ is compact, we apply  Theorem \ref{main 1} to deduce the following result.
\begin{prop}\label{sub1} Under the previous notations, the following statements are equivalent:
\begin{enumerate}
    \item $W_\mathcal{A}$ is subnormal;
    \item $\mathcal{B}_{\mathcal{A}}$ is a local operator moment sequence on  $[0,\|W_\mathcal{A}\|^2].$
\end{enumerate} 
\end{prop}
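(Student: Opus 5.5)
The equivalence follows by chaining Theorem \ref{sub} with Theorem \ref{main 1} for the compact set $K=[0,\|W_\mathcal{A}\|^2]$. Put $\mathcal{B}_{\mathcal{A}}=(B_n^*B_n)_{n\in\mathbb{Z}_+}$. Each $B_n^*B_n$ is a positive bounded operator, so $\mathcal{B}_{\mathcal{A}}\in\mathbf{B}_h(\mathcal{H})^{\mathbb{Z}_+}$ and both theorems apply. Also $\|W_\mathcal{A}\|\le\sup_n\|A_n\|<+\infty$, so $K$ is a genuine compact subset of $\mathbb{R}$.

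For $(1)\Rightarrow(2)$, I assume $W_\mathcal{A}$ is subnormal. Theorem \ref{sub} gives an OVM $E$ supported on $K$ that represents $\mathcal{B}_{\mathcal{A}}$ in the sense of \eqref{1}. For each $x\in\mathcal{H}$, take $\mu_x:=\langle E(\cdot)x,x\rangle_{\mathcal{H}}$. This is a positive Borel measure supported on $K$. Setting $y=x$ in \eqref{1} gives \eqref{el1}, so $\mathcal{B}_{\mathcal{A}}$ is a local operator $K$--moment sequence. This is exactly the trivial implication $1\Rightarrow 2$ of Theorem \ref{main 1}.

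For $(2)\Rightarrow(1)$, I assume $\langle \mathcal{B}_{\mathcal{A}}x,x\rangle$ is a scalar moment sequence with a representing measure supported on $K$, for every $x\in\mathcal{H}$. Since $K$ is compact, the implication $2\Rightarrow 1$ of Theorem \ref{main 1} shows that $\mathcal{B}_{\mathcal{A}}$ is an operator $K$--moment sequence. The representing OVM is obtained from the positive extension of $L_{\mathcal{B}_{\mathcal{A}}}$ to $\mathcal{C}(K)$ and is supported on $K$. Theorem \ref{sub} then shows that $W_\mathcal{A}$ is subnormal.

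There is no real obstacle here; the only care needed is bookkeeping. The compact set in Theorem \ref{main 1} must be the same set $[0,\|W_\mathcal{A}\|^2]$ that appears in Theorem \ref{sub}. Theorem \ref{main 1} yields an OVM supported in $K$ itself rather than merely in some compact set, and that is precisely the support condition Theorem \ref{sub} requires.
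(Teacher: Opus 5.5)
Your proposal is correct and follows the paper's argument: the paper also deduces this proposition by combining Theorem \ref{sub} with Theorem \ref{main 1}, applied to the compact set $[0,\|W_\mathcal{A}\|^2]$. Your extra bookkeeping is accurate but not needed for the argument. This covers the boundedness of $W_\mathcal{A}$, the positivity of each $B_n^*B_n$, and the requirement that the OVM be supported in that same set $K$.
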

For every nonzero $x\in \mathcal{H}$, the scalar weighted shift $W_{\alpha(x)}$   on $\ell^2(\mathbb{C})$  associated with the  weight sequence $\alpha(x) = \left\{\frac{\|B_{n}x\|}{\|B_{n-1}x\|}\right\}_{n=1}^{+\infty}$ is bounded. \ Indeed, for   $x\in \mathcal{S}_\mathcal{H}$, we have $$ \frac{\|B_{n}x\|}{\|B_{n-1}x\|} = \frac{\|A_{n-1}B_{n-1}x\|}{\|B_{n-1}x\|} \le  \|A_{n-1}\|\le \displaystyle\sup_{m\in\mathbb{Z}_+}\|A_m\|<+\infty.$$    \ We now deduce the following theorem, which includes a new characterization of the subnormality of $W_\mathcal{A}$, based on Proposition \ref{sub1} and relying on the subnormality of $W_{\alpha(x)}$ for every nonzero $x\in \mathcal{H}$.
\begin{thm}\label{main 2}
Using the previous notations, the following statements are equivalent:
\begin{enumerate}
    \item $W_\mathcal{A}$ is subnormal;
    \item $W_{\alpha(x)}$ is subnormal for every nonzero $x\in \mathcal{H}$.
\end{enumerate}
\end{thm}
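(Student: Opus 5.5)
The plan is to reduce both directions to Proposition \ref{sub1} via the identity linking the scalar moments of $W_{\alpha(x)}$ to the local moments of $\mathcal{B}_{\mathcal{A}}$. Fix a nonzero $x\in\mathcal{H}$. The moments of the scalar weighted shift $W_{\alpha(x)}$ with weights $\alpha_n(x)=\|B_nx\|/\|B_{n-1}x\|$ are
\[
\gamma_n(x)=\alpha_1(x)^2\cdots\alpha_n(x)^2=\frac{\|B_nx\|^2}{\|x\|^2}=\frac{\langle B_n^*B_nx,x\rangle}{\|x\|^2},
\]
by telescoping, with $B_0=I_{\mathcal{H}}$. The weights are well defined because each $B_n$ is invertible. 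Thus $(\gamma_n(x))_n=\|x\|^{-2}\langle\mathcal{B}_{\mathcal{A}}x,x\rangle$. By the classical Berger--Gellar--Wallen criterion, $W_{\alpha(x)}$ is subnormal if and only if $(\gamma_n(x))$ is a Hausdorff moment sequence on $[0,\|W_{\alpha(x)}\|^2]$.

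For $1\Rightarrow 2$, I apply Theorem \ref{sub} to obtain an OVM $E$ supported on $[0,\|W_\mathcal{A}\|^2]$ with $\langle B_n^*B_nx,x\rangle=\int t^n\,dE_x(t)$. Then $\gamma_n(x)=\int t^n\,d(E_x/\|x\|^2)$ with a positive compactly supported measure, so $W_{\alpha(x)}$ is subnormal by Berger's theorem. Since $W_{\alpha(x)}$ is bounded, as noted before the statement, this direction is immediate.

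For $2\Rightarrow 1$, Berger's theorem gives, for each nonzero $x$, a positive measure $\nu_x$ on $[0,\|W_{\alpha(x)}\|^2]$ representing $\gamma_n(x)$. Setting $\mu_x=\|x\|^2\nu_x$ and $\mu_0=0$, we get $\langle B_n^*B_nx,x\rangle=\int t^n\,d\mu_x$. The main obstacle is that these supports must lie in a single compact set, namely $[0,\|W_\mathcal{A}\|^2]$, so that Proposition \ref{sub1} or Theorem \ref{main 1} applies. To handle this, I bound $\|W_{\alpha(x)}\|=\sup_n\alpha_n(x)\le\sup_m\|A_m\|=\|W_\mathcal{A}\|$, using the estimate displayed before the statement and the fact that the norm of the block-diagonal-type shift $W_\mathcal{A}$ equals $\sup_m\|A_m\|$. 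Hence every $\mu_x$ is supported in $K:=[0,\|W_\mathcal{A}\|^2]$. This shows that $\mathcal{B}_{\mathcal{A}}$ is a local operator $K$-moment sequence, and Proposition \ref{sub1} then yields that $W_\mathcal{A}$ is subnormal.

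A fallback, if one prefers not to compute $\|W_\mathcal{A}\|$ exactly, is a weaker uniform bound $M=(\sup_m\|A_m\|)^2$. Then $\|B_n^*B_n\|\le M^n$, so Theorem \ref{main 1} gives an OVM on $[0,M]$. The bound on the operator moments, via Proposition \ref{comp}, places the support inside $[0,\|W_\mathcal{A}\|^2]$ up to the identification of the norm, and Theorem \ref{sub} concludes.
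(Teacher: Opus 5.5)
Your proof is correct and follows essentially the paper's route. You identify the moments of $W_{\alpha(x)}$ with $\|x\|^{-2}\langle B_n^*B_nx,x\rangle$, apply Berger's criterion, and invoke Proposition \ref{sub1}, while also making explicit the uniform support bound $\|W_{\alpha(x)}\|\le\sup_m\|A_m\|=\|W_\mathcal{A}\|$ needed for $2\Rightarrow 1$, which the paper leaves implicit in the estimate preceding the theorem.
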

\begin{proof}
From Proposition \ref{sub1}, $W_\mathcal{A}$ is subnormal if and only if for every every nonzero $x\in \mathcal{H}$, the scalar sequence $(\langle B_n^*B_nx,x\rangle)_{n\in\mathbb{Z}_+}$ is a  moment sequence on $[0,\|W_\mathcal{A}\|^2]$. \ It follows that for $\alpha(x) = \{\alpha_n(x)\}_{n=1}^{+\infty}$ with $\alpha_n(x) =\displaystyle \sqrt{\frac{\langle B_n^*B_nx,x\rangle}{\langle B_{n-1}^*B_{n-1}x,x\rangle}} = \frac{\|B_{n}x\|}{\|B_{n-1}x\|}$, the shift operator $W_{\alpha(x)}$ is subnormal for every nonzero $x \in \mathcal{H}$.

\end{proof}
 Stampfli's Theorem established a propagation phenomenon for subnormal scalar weighted shifts, as follows.
\begin{thm}\cite[Theorem 6]{stampfli1966weighted} \label{stamp}
Let $\alpha = (\alpha_n)_{n\in\mathbb{Z}_+}$ be a sequence of positive real numbers. \ If $W_\alpha$ is a subnormal weighted shift, and $\alpha_k = \alpha_{k+1}$ for some $k \in \mathbb{Z}_+$, then $\alpha_n = \alpha_1$ for every $n\ge 1$.   
\end{thm}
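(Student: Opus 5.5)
Stampfli's Theorem (Theorem \ref{stamp}) is the classical scalar result, quoted from \cite{stampfli1966weighted}. I would prove it through the Berger characterization of subnormal scalar weighted shifts. Put $\gamma_0=1$ and $\gamma_n=\alpha_0^2\cdots\alpha_{n-1}^2$. Then $W_\alpha$ is subnormal if and only if there is a positive Borel measure $\mu$ on $[0,\|W_\alpha\|^2]$ with $\gamma_n=\int t^n\,d\mu(t)$ for all $n\in\mathbb{Z}_+$. This is the scalar case $\mathcal{H}=\mathbb{C}$ of Theorem \ref{sub}, and it is the result I take as given.

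The key step is a Cauchy--Schwarz equality argument. For every $n$ we have
\[
\gamma_{n+1}^2=\Big(\int t^{n/2}\,t^{(n+2)/2}\,d\mu\Big)^2\le \gamma_n\gamma_{n+2},
\]
which says $\alpha_n\le\alpha_{n+1}$, so subnormality already forces the weights to be nondecreasing. Now suppose $\alpha_k=\alpha_{k+1}$. Then equality holds in Cauchy--Schwarz at $n=k$. Hence $t^{(k+2)/2}=c\,t^{k/2}$ holds $\mu$-a.e., so $t^{k/2}(t-c)=0$ $\mu$-a.e. and $\mu$ is supported in $\{0,c\}$, with $c=\alpha_k^2$. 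I would write $\mu=a\delta_0+b\delta_c$ with $a,b\ge0$.

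Next I read off the weights from this measure. For $n\ge1$ we get $\gamma_n=bc^n$, so $\alpha_n^2=\gamma_{n+1}/\gamma_n=c$ for all $n\ge1$. Therefore $\alpha_n=\alpha_1$ for every $n\ge1$, which is the claim. The weight $\alpha_0$ is not constrained: $\alpha_0^2=\gamma_1/\gamma_0=bc/(a+b)$, which can be smaller because of the atom at $0$. This explains why the statement only asserts constancy from index $1$ on.

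The main point to check is the equality case of Cauchy--Schwarz in $L^2(\mu)$. One must confirm that $\gamma_k>0$, which holds because the weights are positive; this guarantees the two functions are genuinely proportional $\mu$-a.e. and not degenerate. Everything else is routine. For the paper's later operator version, I would expect to apply this argument to each $W_{\alpha(x)}$ through Theorem \ref{main 2}.
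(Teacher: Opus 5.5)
Your argument is correct and complete, given Berger's theorem (the scalar case of Theorem \ref{sub}). The paper gives no proof of this statement; it only quotes Stampfli. The natural comparison is therefore with the paper's two proofs of the operator analogue, Theorem \ref{001}.

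Your Cauchy--Schwarz step is the scalar version of the paper's second proof. Set $\nu=t^k\,d\mu/\gamma_k$, a probability measure on $[0,\|W_\alpha\|^2]$. Then the hypothesis $\alpha_k=\alpha_{k+1}$ says $(\int t\,d\nu)^2=\int t^2\,d\nu$, i.e.\ $\nu$ has zero variance. For $\mathcal{H}=\mathbb{C}$, Lemma \ref{stoshelmn} reduces to the statement that such a $\nu$ is the point mass $\delta_c$.

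Your formulation goes one step further by locating $\text{supp}(\mu)\subseteq\{0,c\}$. This gives forward and backward propagation at once, and it explains why $\alpha_0$ is exempt: the only remaining freedom is the atom at $0$. The paper's semi-spectral argument yields only $A_n=A_p$ for $n\ge p$. It gets the backward direction only through the separate range argument based on Lemma \ref{smuljano}.

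In the other direction, the paper's first proof uses only positivity of $3\times 3$ block Hankel matrices of moments, not a full representing measure. That kind of argument survives under the weaker hypothesis of $2$-hyponormality (Curto's extension, mentioned in the paper). Your route genuinely needs subnormality, through the Berger measure.
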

We obtain the following local propagation results for operator weighted shifts as corollaries of Theorem \ref{main 2} and Theorem \ref{stamp}.
\begin{cor}
 Let $W_{\mathcal{A}}$ be a subnormal operator weighted shift and $x \in  {\mathcal{H}}$. \ If $\|B_kx\|^2 = \|B_{k-1}x\|\|B_{k+1}x\|$ for some $k\in \mathbb{N}$, then $\|B_nx\|^2 = \|B_{n-1}x\|\|B_{n+1}x\|$ for every $n \geq 1$.
 \end{cor}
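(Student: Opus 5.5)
The plan is to reduce the statement to the scalar Stampfli theorem (Theorem \ref{stamp}) applied to the scalar shift $W_{\alpha(x)}$ from Theorem \ref{main 2}. First I dispose of $x=0$: then $B_nx=0$ for all $n$, and both sides of every identity vanish. So fix a nonzero $x\in\mathcal{H}$. Since each $A_n$ is invertible, each $B_n=A_{n-1}\cdots A_0$ is invertible, so $\|B_nx\|>0$ for every $n\in\mathbb{Z}_+$. Hence the weights
\[
\alpha_n(x)=\frac{\|B_nx\|}{\|B_{n-1}x\|},\qquad n\ge 1,
\]
are well defined and positive. By the remark preceding Theorem \ref{main 2} they are bounded, so $W_{\alpha(x)}$ is a bounded weighted shift with positive weights.

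Because $W_{\mathcal{A}}$ is subnormal, the implication $1\Rightarrow 2$ of Theorem \ref{main 2} shows that $W_{\alpha(x)}$ is subnormal. Next I rewrite the hypothesis. Dividing $\|B_kx\|^2=\|B_{k-1}x\|\,\|B_{k+1}x\|$ by the positive number $\|B_{k-1}x\|\,\|B_kx\|$ gives
\[
\alpha_k(x)=\frac{\|B_kx\|}{\|B_{k-1}x\|}=\frac{\|B_{k+1}x\|}{\|B_kx\|}=\alpha_{k+1}(x).
\]
So two consecutive weights of the subnormal shift $W_{\alpha(x)}$ coincide. Theorem \ref{stamp}, after shifting the index to match its labelling of the weights, then yields $\alpha_n(x)=\alpha_1(x)$ for all $n\ge 1$, so that $\alpha_n(x)=\alpha_{n+1}(x)$ for every $n\ge1$. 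Reversing the algebra, I multiply by $\|B_{n-1}x\|\,\|B_nx\|$ to get $\|B_nx\|^2=\|B_{n-1}x\|\,\|B_{n+1}x\|$ for every $n\ge1$, which is the conclusion.

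The only delicate point is bookkeeping. The indexing of Theorem \ref{stamp} (weights $\alpha_n$ for $n\in\mathbb{Z}_+$) differs by one from $\alpha(x)=(\alpha_n(x))_{n\ge1}$. The propagation applies to the whole weight sequence, including the first weight, and this must hold after re-indexing. To confirm this I will use the standard form of Stampfli's theorem: if a subnormal shift has two equal consecutive weights, then all weights from the first onward are equal. Under the re-indexing this gives the equality for all $n\ge1$, which is exactly the range required in the corollary.
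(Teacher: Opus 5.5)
Your reduction is the paper's route: Theorem~\ref{main 2} makes $W_{\alpha(x)}$ subnormal, and Theorem~\ref{stamp} is then applied. Everything up to $\alpha_k(x)=\alpha_{k+1}(x)$ is fine.

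The gap is the bookkeeping step you flagged yourself. The ``standard form'' you invoke says that two equal consecutive weights force all weights from the first one onward to coincide. That is false, and it is not what Theorem~\ref{stamp} says. There the weights are indexed from $0$, and the conclusion is only $\alpha_n=\alpha_1$ for $n\ge1$. The first weight $\alpha_0$ is covered only when the equal pair is $\alpha_0=\alpha_1$. The subnormal shift with weights $\sqrt{p},1,1,\dots$ ($0<p<1$, Berger measure $(1-p)\delta_0+p\delta_1$) shows this cannot be improved.

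Re-index properly with $\beta_j:=\alpha_{j+1}(x)$ for $j\in\mathbb{Z}_+$. Your hypothesis then reads $\beta_{k-1}=\beta_k$, and Theorem~\ref{stamp} gives $\beta_j=\beta_1$ for $j\ge1$, that is, $\alpha_n(x)=\alpha_2(x)$ for $n\ge2$. This yields $\|B_nx\|^2=\|B_{n-1}x\|\,\|B_{n+1}x\|$ only for $n\ge2$. The case $n=1$ follows only when $k=1$, because then $\beta_0=\beta_1$ as well.

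No argument can close the remaining case, because the statement itself fails at $n=1$. Take $0<p<1$, $A_0=\sqrt{p}\,I_{\mathcal{H}}$ and $A_n=I_{\mathcal{H}}$ for $n\ge1$. Then $B_n^*B_n=\gamma_nI_{\mathcal{H}}$ with $\gamma_0=1$ and $\gamma_n=p$ for $n\ge1$. These are the moments of $\bigl((1-p)\delta_0+p\delta_1\bigr)I_{\mathcal{H}}$ on $[0,1]=[0,\|W_{\mathcal{A}}\|^2]$, so $W_{\mathcal{A}}$ is subnormal by Theorem~\ref{sub}. For $x\neq0$ we have $\|B_nx\|=\sqrt{p}\,\|x\|$ for all $n\ge1$, so the hypothesis holds with $k=2$. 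However, $\|B_1x\|^2=p\|x\|^2\neq\sqrt{p}\,\|x\|^2=\|B_0x\|\,\|B_2x\|$.

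The paper's one-line derivation from Theorems~\ref{main 2} and~\ref{stamp} contains the same off-by-one slip. What your argument proves, once correctly re-indexed, is the identity for all $n\ge2$, and for all $n\ge1$ when $k=1$.
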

\begin{cor} Let $W_\mathcal{A}$ be a subnormal operator weighted shift. \ Suppose that for every $x\in  {\mathcal H}$, there exists $k_x\ge 1$ such that  $\|B_{k_x}x\|^2=\|B_{k_x+1}x\|\|B_{k_x-1}x\|$, then $\|B_nx\|^2 = \|B_{n-1}x\|\|B_{n+1}x\|$ for every $x\in {\mathcal H}$ and $n\ge 1$.  
 \end{cor}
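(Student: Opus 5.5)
The plan is to reduce the statement to the scalar case, one vector at a time, and then apply Stampfli's Theorem \ref{stamp} along each vector. There is nothing operator-theoretic left once Theorem \ref{main 2} is in hand.

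First, the hypothesis for a fixed $x$ is a statement about the scalar shift $W_{\alpha(x)}$ only. Since $W_\mathcal{A}$ is subnormal, Theorem \ref{main 2} gives that $W_{\alpha(x)}$ is subnormal for every nonzero $x\in\mathcal{H}$. For $x=0$ the conclusion is trivial, since both sides of every identity vanish, so fix $x\neq 0$. Each $B_n$ is a product of invertible operators, hence invertible, so $\|B_nx\|>0$ for all $n$. The weights $\alpha_n(x)=\|B_nx\|/\|B_{n-1}x\|$, $n\ge 1$, are therefore well-defined positive reals.

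Next, translate the identity into a statement about consecutive weights. Dividing by $\|B_{k-1}x\|\,\|B_kx\|>0$ shows that
\[
\|B_kx\|^2=\|B_{k-1}x\|\,\|B_{k+1}x\| \iff \frac{\|B_kx\|}{\|B_{k-1}x\|}=\frac{\|B_{k+1}x\|}{\|B_kx\|} \iff \alpha_k(x)=\alpha_{k+1}(x).
\]
For the second corollary, the hypothesis at $k_x$ therefore says that two consecutive weights of the subnormal shift $W_{\alpha(x)}$ coincide. Stampfli's Theorem \ref{stamp} then gives that $\alpha_n(x)$ is constant for all $n\ge 1$. Consequently $\alpha_n(x)=\alpha_{n+1}(x)$ for every $n\ge1$, and reversing the equivalence above yields $\|B_nx\|^2=\|B_{n-1}x\|\,\|B_{n+1}x\|$ for every $n\ge1$.

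Since $x$ was arbitrary, the statement holds for all $x\in\mathcal{H}$; this is the same argument as the preceding corollary, applied at each $x$ with its own index $k_x$. The only point needing care is indexing. Stampfli's theorem is stated with $\alpha$ indexed from $0$, while here $\alpha(x)$ is indexed from $1$. This is harmless: we only use the propagation statement that if two consecutive weights of a subnormal weighted shift agree, then all weights from that point on agree, and in fact all weights agree (with the possible exception of the very first weight, depending on the indexing convention, which is why Theorem \ref{stamp} is stated as $\alpha_n=\alpha_1$). I would check that the conclusion covers all $n\ge1$ with the paper's normalization, where the first weight of $W_{\alpha(x)}$ is $\alpha_1(x)$. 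I expect this to be the only (minor) obstacle; otherwise the proof is a direct combination of Theorems \ref{main 2} and \ref{stamp}.
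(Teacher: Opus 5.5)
Your route is exactly the one the paper indicates: use Theorem~\ref{main 2} to get subnormality of every $W_{\alpha(x)}$, rewrite $\|B_kx\|^2=\|B_{k-1}x\|\,\|B_{k+1}x\|$ as $\alpha_k(x)=\alpha_{k+1}(x)$, and invoke Theorem~\ref{stamp}. The rewriting step is correct. The gap is the indexing point you deferred, and it is not harmless.

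With weights indexed from $0$, Stampfli's Theorem only gives $\alpha_n=\alpha_1$ for $n\ge 1$, so the very first weight is exempt. For $W_{\alpha(x)}$ that first weight is $\alpha_1(x)=\|B_1x\|/\|x\|$. From $\alpha_{k_x}(x)=\alpha_{k_x+1}(x)$ you therefore only get $\alpha_n(x)=\alpha_2(x)$ for $n\ge 2$. Hence $\|B_nx\|^2=\|B_{n-1}x\|\,\|B_{n+1}x\|$ follows only for $n\ge 2$. The case $n=1$ is equivalent to $\alpha_1(x)=\alpha_2(x)$, which Stampfli gives only when $k_x=1$. Your sentence ``Stampfli's Theorem then gives that $\alpha_n(x)$ is constant for all $n\ge1$'' is therefore unjustified.

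The gap cannot be closed, because the $n=1$ conclusion is false in general. Take $\mathcal{H}=\mathbb{C}$, $A_0=1/2$, and $A_n=1$ for $n\ge 1$.
\begin{itemize}
\item Then $B_0=1$ and $B_n=1/2$ for all $n\ge 1$, so the moment sequence is $(1,1/4,1/4,\dots)$.
\item This sequence is represented by $\frac{3}{4}\delta_0+\frac{1}{4}\delta_1$ on $[0,\|W_\mathcal{A}\|^2]=[0,1]$, so $W_\mathcal{A}$ is subnormal.
\item For every $x\neq 0$ the hypothesis holds with $k_x=2$, since $\|B_2x\|^2=\|B_1x\|\,\|B_3x\|=|x|^2/4$.
\item Yet $\|B_1x\|^2=|x|^2/4\neq |x|^2/2=\|B_0x\|\,\|B_2x\|$.
\end{itemize}
The paper's one-line derivation of this corollary from Theorem~\ref{main 2} and Theorem~\ref{stamp} has the same defect.

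What your argument does prove is this: the identity holds for every $x$ and every $n\ge 2$. It also holds at $n=1$ for those $x$ with $k_x=1$, since then the first two weights of $W_{\alpha(x)}$ coincide and all weights are equal.
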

 The first-named author has extended Stampfli's Theorem to the more general class of $2$--hyponormal weighted shifts; see \cite{curt}, for more information. \ For matricial weighted shift  $W_\mathcal{A}$, when $\mathcal{A} = \{A_n\}_{n=0}^{+\infty}$  is  a sequence of non-negative invertible matrices on $\mathbb{C}^p$ and $p\in \mathbb{N} $, studied by N. Ivanovski in \cite{ivan}, a partial version of  Stampfli's Theorem  is obtained. More precisely, it is  shown that if $A_k = A_{k+1}$ for some $k \ge 1$, then $A_n =A_k$ for every $n\ge k$. \ In recent work \cite[Theorem 5.7]{cehz}, we have completed Ivanovski's result for matricial weighted shifts by showing that $A_n =A_k$ for every $n\ge 0$. \ Next, we give the extension of Stampfli's Theorem to the case of subnormal operator weighted shifts on $\mathcal{H}$. 
\begin{thm}\label{001}
Let  $\mathcal{A}=(A_n)_{n\in\mathbb{Z}_+}$ be a sequence of positive invertible
operators on $\mathcal{H}$ satisfying $\displaystyle\sup_{n\in\mathbb{Z}_+} \|A_n\|<+\infty$. \ If the operator weighted shift $ W_\mathcal{A}$ is  subnormal  such that $A_k = A_{k+1}$ for some $k\in\mathbb{Z}_+$, then $A_{n}= A_{1}$ for all $n\in\mathbb{Z}_+$. 
\end{thm}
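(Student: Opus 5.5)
The plan is to move everything to the operator moment sequence $\mathcal{B}_{\mathcal{A}}=(T_n)_{n\in\mathbb{Z}_+}$ with $T_n:=B_n^*B_n$, and then work inside a self-adjoint dilation. By Theorem \ref{sub}, Theorem \ref{main 1} and Method 2, there are a Hilbert space $\mathcal{K}$, a bounded $V:\mathcal{H}\to\mathcal{K}$ and a positive operator $B\in\mathbf{B}(\mathcal{K})$ with $\sigma(B)\subseteq[0,\|W_\mathcal{A}\|^2]$ such that
\[
T_n=V^*B^nV \quad\text{for all } n\in\mathbb{Z}_+ .
\]
Write $A:=A_k=A_{k+1}$. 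Then $B_{k+1}=AB_k$ and $B_{k+2}=A^2B_k$, so
\[
T_{k+1}=B_k^*A^2B_k,\qquad T_{k+2}=B_k^*A^4B_k .
\]
Since $B_k$ is invertible, these give the operator ``equality case of Cauchy--Schwarz''
\[
T_{k+2}=T_{k+1}T_k^{-1}T_{k+1}.
\]

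The first key step is to turn this identity into an invariance statement. Set $X:=B^{k/2}V$ and $Z:=BX$. Then $X^*X=T_k$ is invertible, $X^*Z=T_{k+1}$ and $Z^*Z=T_{k+2}$. With $Q:=X(X^*X)^{-1}X^*$, the orthogonal projection onto the closed range of $X$, the identity reads $\|Z\xi\|^2=\|QZ\xi\|^2$ for all $\xi$. Hence $Z=QZ$, that is,
\[
BX=XG,\qquad G:=T_k^{-1}T_{k+1}.
\]
So $\mathcal{R}(X)$ is invariant under the self-adjoint $B$ and therefore reduces it. Iterating gives $B^mX=XG^m$, hence $T_{k+m}=T_kG^m$.

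Next I will translate this back into the weights, first going forward. We have $G=B_k^{-1}A^2B_k$, so
\[
T_{k+m}=B_k^*A^{2m}B_k .
\]
Induction on $m$ then works: if $B_{k+m-1}=A^{m-1}B_k$, then
\[
B_k^*A^{m-1}A_{k+m-1}^2A^{m-1}B_k=B_k^*A^{2m}B_k .
\]
Invertibility of $B_k$ and $A$ gives $A_{k+m-1}^2=A^2$, and uniqueness of positive square roots gives $A_{k+m-1}=A$. This yields $A_n=A$ for all $n\ge k$.

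The main obstacle is backward propagation to the indices $n<k$; this is where Ivanovski's matricial result stopped. I would argue by downward induction, showing $A_{k-1}=A$ and then repeating. Put $X':=B^{(k-1)/2}V$, so that $X=B^{1/2}X'$. Since $\mathcal{R}(X)$ reduces $B$, so does its orthocomplement, and I want to conclude $BX'=X'G'$ for a suitable $G'$. Equivalently, I want $\mathcal{R}(X')\subseteq\mathcal{R}(X)\oplus\mathcal{N}(B)$, and then to exclude the kernel part using invertibility of $T_{k-1}$ and $T_k$. The identity $X^*X=X'^*BX'$ expresses that, relative to the spectral measure of $B$, the part of $X'$ orthogonal to $\mathcal{R}(X)$ contributes a positive but ``extra'' mass. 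To rule it out I would use the local scalar picture from Theorem \ref{main 2}. For each $x$, the first step shows that the scalar moments $\langle T_nx,x\rangle$ for $n\ge k$ have the form $\langle T_kG^{n-k}x,x\rangle$, whose representing measure (restricted to the tail) is given by the spectral measure of $B$ on $\mathcal{R}(X)$. Applying the Stampfli-type rigidity of Theorem \ref{stamp}, through its corollaries, to suitably chosen vectors (for instance eigen-directions of $A$ after conjugating by $B_k$, or spectral subspaces of $A$ in the non-discrete case) should force the scalar measures $\mu_x$ to have no mass outside the tail's support. That in turn gives $T_{k-1}=B_{k-1}^*A^{-2}\dots$, and hence $A_{k-1}=A$ by the same square-root argument as before. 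If this spectral-subspace reduction proves awkward, the fallback is to compress to spectral subspaces of $A$ on which $A$ is close to a scalar and pass to the limit, relying on the matricial case \cite[Theorem 5.7]{cehz}.
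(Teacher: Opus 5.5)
Your forward half is correct. The identity $T_{k+2}=T_{k+1}T_k^{-1}T_{k+1}$, the argument $Z=QZ$, the relation $BX=XG$ and the square-root induction all check out. It is essentially a hands-on version of the paper's second proof (the two-moment characterization of spectral measures, Lemma~\ref{stoshelmn}); the paper's main proof instead applies Smul'jan's Lemma~\ref{smuljano} to the positive $3\times3$ block Hankel matrix.

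The genuine gap is the backward half, which is not yet an argument.
\begin{itemize}
\item Invertibility of $T_{k-1}$ and $T_k$ plays no role in excluding an $\mathcal{N}(B)$-component.
\item The scalar Stampfli route needs two equal consecutive weights of $W_{\alpha(x)}$. From what you know, this happens only when $B_kx$ is an eigenvector of $A$. Such vectors need not exist, and even when they do they only control the numbers $\|B_nx\|$, not the operator $A_{k-1}$.
\item The fallback via compressions to spectral subspaces of $A$ is unjustified, since those subspaces need not be invariant under $A_0,\dots,A_{k-1}$.
\item The goal you set, that $\mu_x$ has no mass outside the support of the tail measure $t^k\,d\mu_x$, is false in general. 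The only possible extra mass is an atom at $0$, and it can occur. It is also not what is needed.
\end{itemize}

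The missing observation is one line in your own framework. Since $B^{1/2}$ commutes with $Q$ and $X=B^{1/2}X'$, we get $B^{1/2}(I-Q)X'=(I-Q)X=0$.
\begin{itemize}
\item If $k\ge2$, then $X'=B^{1/2}\bigl(B^{(k-2)/2}V\bigr)$ maps into $\overline{\mathcal{R}(B)}=\mathcal{N}(B)^{\perp}$, which $Q$ preserves. Hence $(I-Q)X'$ maps into $\mathcal{N}(B)\cap\mathcal{N}(B)^{\perp}=\{0\}$, so $X'=QX'$.
\item View $X$ as an isomorphism of $\mathcal{H}$ onto the closed subspace $\mathcal{R}(X)$. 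Then $B|_{\mathcal{R}(X)}=XGX^{-1}$ is invertible, so $X'=(B|_{\mathcal{R}(X)})^{-1/2}X$.
\item Therefore $T_{k-1}=X'^*X'=T_kG^{-1}=B_k^*A^{-2}B_k$. With $B_k=A_{k-1}B_{k-1}$ this reads $A_{k-1}A^{-2}A_{k-1}=I$, whence $A_{k-1}=A$.
\end{itemize}
You can then restart with $k-1$ in place of $k$. This is the exact analogue of the paper's backward step. The paper applies statement 3 of Lemma~\ref{smuljano} to the block Hankel matrix of $T_{k-2},\dots,T_{k+2}$ (normalized to $k=2$), using that its lower-right $2\times2$ block has closed range. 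It too needs $k\ge2$.

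The last step, from $k=1$ down to $A_0$, cannot be done by any method. There $X'=V$, and the $\mathcal{N}(B)$-component of $V$ (an atom of $E$ at $0$) may be nonzero. For example, take $A_0=\alpha I$ and $A_n=aI$ for $n\ge1$, with $0<\alpha<a$. Then $T_n=\bigl(\int t^n\,d\mu(t)\bigr)I$ with $\mu=(1-\alpha^2/a^2)\delta_0+(\alpha^2/a^2)\delta_{a^2}$. So $W_{\mathcal{A}}$ is subnormal and $A_1=A_2$, yet $A_0\neq A_1$.

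Thus the conclusion at $n=0$ holds only when $k=0$, which your forward step already covers. What both your completed argument and the paper's argument actually establish is $A_n=A_1$ for all $n\ge1$, in agreement with Theorem~\ref{stamp}.
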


 We will use the following version of Smul'jan's extension theorem for positive operators from \cite[Proposition 2.2]{curto}.
 \
\begin{lem}\label{smuljano} Let $X \in \mathbf{B}(\mathcal{H})$, $Z \in  \mathbf{B}(\mathcal{K})$, $X, Z \ge 0$, and $Y \in  \mathbf{B}(\mathcal{K}, \mathcal{H})$. \ The following statements are equivalent:
\begin{enumerate}
    \item $\begin{pmatrix}
        X & Y \\
        Y^* & Z
    \end{pmatrix} \ge 0$;
    \item There exists $W \in \mathcal{B}(\mathcal{K}, \mathcal{H})$ such that $X^{\frac{1}{2}}W = Y$ and $Z \ge W^*W$;
    \item There exists $U \in \mathcal{B}(\mathcal{H}, \mathcal{K})$ such that $Z^{\frac{1}{2}}U = Y^*$ and $X \ge U^*U$.
\end{enumerate}
\end{lem}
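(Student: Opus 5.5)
The plan is to prove $1\Rightarrow 2$ by a Douglas-type factorization, to prove $2\Rightarrow 1$ by exhibiting the block matrix as a sum of two positive operators, and to get $1\Leftrightarrow 3$ from $1\Leftrightarrow 2$ by symmetry.

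\emph{Step 1 (a Cauchy--Schwarz inequality).} Assume 1. Applying positivity of the block operator to the vectors $(th, e^{i\theta}k)$ with $t>0$ and $\theta$ real gives
\[
t^2\langle Xh,h\rangle + 2t\,\mathrm{Re}\bigl(e^{-i\theta}\langle Yk,h\rangle\bigr) + \langle Zk,k\rangle \ge 0 .
\]
Choosing $\theta$ to make the middle term equal to $-2t|\langle Yk,h\rangle|$ and then optimizing in $t$ yields
\[
|\langle Yk,h\rangle| \le \|X^{1/2}h\|\,\|Z^{1/2}k\| \qquad (h\in\mathcal H,\ k\in\mathcal K).
\]
The degenerate case $\langle Xh,h\rangle=0$ is handled by letting $t\to\infty$, which forces $\langle Yk,h\rangle=0$.

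\emph{Step 2 (constructing $W$).} Define $D$ on $\mathcal R(X^{1/2})$ by $D(X^{1/2}h):=Y^*h$. By Step 1,
\[
|\langle Y^*h,k\rangle|\le \|X^{1/2}h\|\,\|Z^{1/2}k\|\le \|Z\|^{1/2}\|X^{1/2}h\|\,\|k\| .
\]
Hence $D$ is well defined, since $X^{1/2}h=0$ forces $Y^*h=0$, and it is bounded. Extend $D$ by continuity to $\overline{\mathcal R(X^{1/2})}$ and set it equal to $0$ on $\mathcal N(X^{1/2})=\mathcal R(X^{1/2})^\perp$. This gives $D\in\mathbf B(\mathcal H,\mathcal K)$ with $DX^{1/2}=Y^*$. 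Put $W:=D^*\in\mathbf B(\mathcal K,\mathcal H)$; then $X^{1/2}W=Y$.

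For the inequality $Z\ge W^*W$, use the sharper form of Step 1: for $g$ in $\mathcal R(X^{1/2})$, and by density in its closure, $|\langle g,Wk\rangle|=|\langle Dg,k\rangle|\le\|g\|\,\|Z^{1/2}k\|$. For $g\in\mathcal N(X^{1/2})$ both sides are trivial, since $Dg=0$. Taking the supremum over unit vectors $g$ gives $\|Wk\|\le\|Z^{1/2}k\|$, that is, $W^*W\le Z$. This step is the only real obstacle: the bound has to be carried through the closure of the range and the kernel, and the constant must be exactly $\|Z^{1/2}k\|$, not merely $\|Z\|^{1/2}\|k\|$.

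\emph{Step 3 ($2\Rightarrow 1$).} Assume 2. Then
\[
\begin{pmatrix} X & Y\\ Y^* & Z\end{pmatrix}
=\begin{pmatrix} X^{1/2}\\ W^*\end{pmatrix}\begin{pmatrix} X^{1/2} & W\end{pmatrix}
+\begin{pmatrix}0&0\\0&Z-W^*W\end{pmatrix}.
\]
This identity uses $X^{1/2}W=Y$ and $W^*X^{1/2}=Y^*$. Both summands are positive, so the block operator is positive.

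\emph{Step 4 ($1\Leftrightarrow 3$).} Conjugate by the flip unitary $J(h,k)=(k,h)$ from $\mathcal H\oplus\mathcal K$ to $\mathcal K\oplus\mathcal H$. This shows that 1 is equivalent to
\[
\begin{pmatrix} Z & Y^*\\ Y & X\end{pmatrix}\ge 0 .
\]
Now apply $1\Leftrightarrow 2$ with the roles of $(X,Y,Z)$ played by $(Z,Y^*,X)$. The resulting condition is exactly statement 3, with $U$ in place of $W$.
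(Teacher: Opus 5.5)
The paper does not prove this lemma. It imports it, as a version of Smul'jan's extension theorem, from \cite[Proposition 2.2]{curto}, so there is no in-paper argument to compare against. Your proposal is a correct, self-contained proof.

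\begin{itemize}
\item The Cauchy--Schwarz bound $|\langle Yk,h\rangle|\le\|X^{1/2}h\|\,\|Z^{1/2}k\|$ in Step 1 is exactly the estimate needed.
\item Step 2 is in effect a hand-built Douglas factorization with the sharp constant.
\item The $CC^*$-plus-positive-corner decomposition in Step 3 gives the converse.
\item The flip unitary correctly transfers $1\Leftrightarrow 2$ to $1\Leftrightarrow 3$, with $(\mathcal H,\mathcal K,X,Y,Z)$ replaced by $(\mathcal K,\mathcal H,Z,Y^*,X)$.
\end{itemize}

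Compared with the citation, your route makes two infinite-dimensional points visible. First, factoring through $X^{1/2}$, rather than using the finite-dimensional form $Y=XW$, $Z\ge W^*XW$, needs no closed-range hypothesis on $X$. Second, the $W$ in statement 2 must be the minimal solution, with range in $\overline{\mathcal R(X^{1/2})}$. Adding to $W$ any $V$ with $\mathcal R(V)\subseteq\mathcal N(X^{1/2})$ preserves $X^{1/2}W=Y$ but increases $W^*W$ by $V^*V$, so an arbitrary solution can violate $Z\ge W^*W$. Your choice $D=0$ on the kernel is exactly what makes the sharp inequality go through.

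One small point to tighten in Step 2. For a general unit vector $g$, note that $\langle g,Wk\rangle=\langle Dg,k\rangle$ vanishes for $g\in\mathcal N(X^{1/2})$, i.e.\ $Wk=D^*k\in\overline{\mathcal R(X^{1/2})}$. The supremum defining $\|Wk\|$ can then be taken over unit vectors of $\overline{\mathcal R(X^{1/2})}$ only, where your bound holds; as written, "both sides are trivial" does not quite combine the two pieces.
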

  We will use statement $2$ in Lemma \ref{smuljano} to prove that $A_{n} = A_{k}$ for all $n \geq k$ and $3$ to prove that $A_{n} = A_{k}$ for all $n \leq k$. 
  
  For $n \geq k$, without any loss of generality, we can assume that $k=0$ and hence that $A_0=A_1=:A$. \ We have
$$\begin{pmatrix}
    I & B_1^*B_1 & B_2^*B_2\\
    B_1^*B_1& B_2^*B_2 & B_3^*B_3\\
    B_2^*B_2& B_3^*B_3 & B_4^*B_4
\end{pmatrix}\ge 0 \Rightarrow \begin{pmatrix}
    I & A^2 &A^4\\
   A^2& A^4 & A^2A_2^2A^2\\
   A^4& A^2A_2^2A^2 & A^2A_2A_3^2A_2A^2\end{pmatrix}\ge 0.$$ 
\ Using statement 2 in Lemma \ref{smuljano}, with  $X=\begin{pmatrix}
       I & A^2 \\
   A^2& A^4 
   \end{pmatrix}$ and $Y=\begin{pmatrix}
       A^4  \\
  A^2A_2^2A^2 
   \end{pmatrix}$, there exists $W=\begin{pmatrix}
   W_1 \\
   W_2\\
  \end{pmatrix}$, such that \[\begin{pmatrix}
    I & A^2 \\
   A^2& A^4 \end{pmatrix}^{\frac{1}{2}}\begin{pmatrix}
   W_1 \\
   W_2\\
  \end{pmatrix}=\begin{pmatrix}
       A^4  \\
  A^2A_2^2A^2 
   \end{pmatrix}.\]  It is easy to check that $$\begin{pmatrix}
    I & A^2 \\
   A^2& A^4 \end{pmatrix}^{\frac{1}{2}}= \begin{pmatrix}
     (I+A^4)^{\frac{-1}{2}} & 0 \\
  0& (I+A^4)^{\frac{-1}{2}} \end{pmatrix}\begin{pmatrix}
    I & A^2 \\
   A^2& A^4 \end{pmatrix},$$ and then that
   $$ \left\{\begin{array}{lll}
      (I+A^4)^{\frac{-1}{2}}(W_1+A^2W_2) & =  & A^4  \\
         (I+A^4)^{\frac{-1}{2}}(A^2W_1+A^4W_2) & = & A^2A_2^2A^2. 
   \end{array}\right.
   $$ It follows that $A^6= A^2A_2^2A^2$  and hence $A_2=A$. \ We end the proof by induction.

To prove that $A_{n} = A_{k}$ for all $n \le k$, we can assume, without any loss of generality, that $k=2$ and we write $A_2=A_3=:A$. \ We need to show that $A_1=A$. \ To this aim,  we use the identity on $(X,X,X)$ to derive that
$$\begin{pmatrix}
    I & B_1^*B_1 & B_2^*B_2\\
    B_1^*B_1& B_2^*B_2 & B_3^*B_3\\
    B_2^*B_2& B_3^*B_3 & B_4^*B_4
\end{pmatrix}=\begin{pmatrix}
    I & A_0^2 &A_0A_1^2A_0\\
  A_0^2& A_0A_1^2A_0 & A_0A_1A^2A_1A_0\\
  A_0A_1^2A_0&  A_0A_1A^2A_1A_0&  A_0A_1A^4A_1A_0\end{pmatrix}\ge 0.$$ 
\ Using statement $3$ in Lemma \ref{smuljano}, with  $Z=\begin{pmatrix}
   A_0A_1^2A_0 & A_0A_1A^2A_1A_0\\
    A_0A_1A^2A_1A_0&  A_0A_1A^4A_1A_0\end{pmatrix}$ \; and \;  $Y=\begin{pmatrix}
      A_0^2  \\
 A_0A_1^2A_0
   \end{pmatrix}$, there exists $W=\begin{pmatrix}
   W_1 \\
   W_2
  \end{pmatrix}$ such that $$
  \begin{array}{lll}\begin{pmatrix}
   A_0A_1^2A_0 & A_0A_1A^2A_1A_0\\
    A_0A_1A^2A_1A_0&  A_0A_1A^4A_1A_0\end{pmatrix}^{\frac{1}{2}}\begin{pmatrix}
   W_1 \\
   W_2\\
  \end{pmatrix}&=&\begin{pmatrix}
      A_0^2  \\
 A_0A_1^2A_0
   \end{pmatrix}\end{array}$$
    Now, from the   identity $Z = \begin{pmatrix}
    A_0A_1  & 0\\
  0& A_0A_1 \\
 \end{pmatrix}\begin{pmatrix}
    I  & A^2\\
  A^2& A^4 \\
 \end{pmatrix}\begin{pmatrix}
    A_1A_0  & 0\\
  0& A_1A_0 \\
 \end{pmatrix}$, we derive  that $$\mathcal{R}(Z)=\{(A_0A_1V,A_0A_1A^2V) \mid V\in  \mathbf{B}(\mathcal{H})\}.$$ 
We also see that $Z\ge  0$  and that $Z$ has closed range. \ It follows that  $\mathcal{R}({Z}^\frac{1}{2})= \mathcal{R}(Z)=\{(A_0A_1V,A_0A_1A^2V) \mid V\in  \mathbf{B}(\mathcal{H})\}$ and hence that  there exists $V\in \mathbf{B}(\mathcal{H})$ such that 
 $$
 \left\{\begin{array}{ll}
       A_0^2 &= A_0A_1V\\
 A_0A_1^2A_0&= A_0A_1A^2V
 \end{array}\right.
 $$
 Since $A_0$ and $A_1$ are invertible, we get $A_0=A_1V$ and $A_1A_0= A^2V$. \ We deduce in particular that $V$ is invertible and that $A_1^2V= A^2V$.  We obtain $A_1^2= A^2$ and finally $A_1=A$, as required.\\

 We give another proof, of independent interest, of the forward propagation property of subnormal operator weighted shifts. \ It is based on the following lemma on semi-spectral measures.
\begin{lem}\cite[Theorem 1.1]{pietrzycki2022two}\label{stoshelmn}
A Borel semi-spectral measure $E$ on $\mathbb{R}$ with compact support is spectral if and only if
\(
\displaystyle\left(\int_{\mathbb{R}} tdE(t)\right)^2 = \int_{\mathbb{R}} t^2dE(t).
\)
\end{lem}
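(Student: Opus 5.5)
The forward implication is immediate from the multiplicativity of the functional calculus. If $E$ is spectral with compact support, then $f\mapsto\int_{\mathbb{R}} f\,dE$ is a $*$-homomorphism on bounded Borel functions on $\text{supp}(E)$. Applying it to $f(t)=t$ gives $\left(\int t\,dE\right)^2=\int t^2\,dE$.

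For the converse, I would use Naimark's Dilation Theorem \ref{pom}. Since $E$ is semi-spectral, there are a Hilbert space $\mathcal{K}\supseteq\mathcal{H}$ and a spectral measure $F$ on $\mathcal{K}$ with $E(\cdot)=P_\mathcal{H}F(\cdot)|_\mathcal{H}$. Put $K_0=\text{supp}(E)$, which is compact.

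\textbf{Step 1: $\mathcal{H}\subseteq F(K_0)\mathcal{K}$.} For $h\in\mathcal{H}$ we have
\[
\|F(\mathbb{R}\setminus K_0)h\|^2=\langle E(\mathbb{R}\setminus K_0)h,h\rangle=0 .
\]
So I can replace $\mathcal{K}$ by the reducing subspace $F(K_0)\mathcal{K}$ and $F$ by its restriction there. After this reduction $F$ is supported in $K_0$. Hence $B:=\int t\,dF(t)$ is a bounded self-adjoint operator, and $B^n=\int t^n\,dF$ for $n\in\mathbb{Z}_+$.

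\textbf{Step 2: compress the first two moments.} Compressing gives
\[
T:=\int t\,dE=P_\mathcal{H}B|_\mathcal{H},\qquad \int t^2\,dE=P_\mathcal{H}B^2|_\mathcal{H}.
\]

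\textbf{Step 3: $\mathcal{H}$ is invariant under $B$.} For $h\in\mathcal{H}$, the hypothesis $T^2=\int t^2\,dE$ gives
\[
\|P_\mathcal{H}Bh\|^2=\langle T^2h,h\rangle=\langle B^2h,h\rangle=\|Bh\|^2 .
\]
Hence $\|(I-P_\mathcal{H})Bh\|=0$, so $Bh\in\mathcal{H}$. Since $B$ is self-adjoint, $\mathcal{H}$ is then a reducing subspace for $B$, i.e.\ $P_\mathcal{H}$ commutes with $B$.

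\textbf{Step 4: $P_\mathcal{H}$ commutes with $F$.} Every spectral projection $F(\Delta)$ lies in the von Neumann algebra generated by $B$. Concretely, $\chi_\Delta$ is a bounded pointwise limit of polynomials on $K_0$ in the Borel functional calculus sense, equivalently $F(\Delta)\in\{B\}''$. Therefore $P_\mathcal{H}$ commutes with every $F(\Delta)$.

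\textbf{Step 5: conclusion.} It follows that $E(\Delta)=P_\mathcal{H}F(\Delta)|_\mathcal{H}=F(\Delta)|_\mathcal{H}$, which is an orthogonal projection on $\mathcal{H}$. So $E$ is spectral.

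I expect the main obstacle to be Step 4: justifying that commuting with $B$ forces commuting with all of $F$. I would handle it with the spectral theorem, namely that the commutant of a self-adjoint operator coincides with the commutant of its spectral measure. The boundedness needed for this is exactly what Step 1 secures, since it lets us work with a bounded $B$ even though the dilation $F$ might a priori have noncompact support.
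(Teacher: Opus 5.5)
Your proof is correct. The paper gives no proof of this lemma: it imports it from \cite[Theorem 1.1]{pietrzycki2022two} and uses it as a black box in the alternative proof of forward propagation. Your proposal therefore supplies a self-contained argument that relies only on Naimark's theorem (Theorem \ref{pom}) and the spectral theorem.

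Compact support enters exactly where you say. Cutting the dilation down to $F(K_0)\mathcal{K}$ makes $B$ bounded and lets you identify $F$ with the spectral measure of $B$. The hypothesis then becomes $\|P_\mathcal{H}Bh\|=\|Bh\|$, which forces $\mathcal{H}$ to reduce $B$, and hence to reduce every $F(\Delta)$.

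One phrasing point in Step 4 needs fixing. The indicator of an arbitrary Borel set is not in general a bounded pointwise limit of a single sequence of polynomials; $\chi_{\mathbb{Q}\cap[0,1]}$ is not even Baire class one. You should rest the step on $F(\Delta)\in\{B\}''$ alone, or argue directly. For $x,y\in\mathcal{K}$, the compactly supported complex measures $\langle F(\cdot)P_\mathcal{H}x,y\rangle$ and $\langle F(\cdot)x,P_\mathcal{H}y\rangle$ have the same moments $\langle B^nP_\mathcal{H}x,y\rangle=\langle P_\mathcal{H}B^nx,y\rangle$, so they coincide.

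Your argument is also the Stinespring proof of Choi's multiplicative-domain theorem, applied to the unital positive (hence completely positive) map $\Phi(f)=\int f\,dE$ on $\mathcal{C}(K_0)$. The hypothesis says $\Phi(t^2)=\Phi(t)^2$ with $t$ real, so $t$ lies in the multiplicative domain. That domain is a $C^*$-subalgebra containing $1$ and $t$, so by Stone--Weierstrass it is all of $\mathcal{C}(K_0)$. This viewpoint fits naturally with the Stinespring-based Method 2 used for Theorem \ref{main 1}.
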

\begin{proof} Let $W_\mathcal{A}$ be a subnormal operator weighted shift such that $A_p=A_{p+1}$. \ By Theorem \ref{sub}, there exists an operator-valued measure  $E$ supported on $K=[0,\|W_{\mathcal A}\|^2]$, such that:
\[
B_{n}^*B_n = \int_K t^n dE(t) \text{ for every } n\in\mathbb{Z}_+.
\]
We have:
\begin{equation}\label{a1}
   A_{p}^2 = (B_{p}^{*})^{-1}(B_{p+1}^*B_{p+1})B_{p}^{-1}  = \int_K t dE_p(t),
\end{equation}
where $dE_p(t) = t^p(B_{p}^{*})^{-1}dE(t)B_{p}^{-1}$, which is a semi-spectral measure, since $E_p(\mathbb{R})=I_{\mathcal{H}}$. \
Now,  $A_p = A_{p+1}$ implies:
\begin{equation}\label{b1}
   A_{p}^4 = (B_{p}^{*})^{-1}(B_{p+2}^*B_{p+2})B_{p}^{-1} = \int_K t^2 dE_p(t).
\end{equation}
From equations \eqref{a1} and \eqref{b1} we conclude:
\begin{equation*}
     \left(\int_K tdE_p(t)\right)^2 = \int_K t^2dE_p(t).
\end{equation*}
So according to Lemma \ref{stoshelmn}, $E_p$ is a spectral measure. \ In particular, the following identity holds:
\[
\left(\int_K tdE_p(t)\right)^n = \int_K t^ndE_p(t) \text{ for every } n\ge 3.
\]
This leads to the relationship:
\begin{equation}\label{5}
   B_{n+p}^*B_{n+p} = B_{p}^*A_{p}^{2n}B_{p} \text{ for every } n \ge 3.
\end{equation}
We can now proceed as follows:
\begin{itemize}
   \item let $n=3$ in \eqref{5}; then, $A_{p+2} = A_p$;
   \item let $n=4$ in \eqref{5}; then, $A_{p+3} = A_p$.
   \item By induction on $n$, we obtain $A_n = A_p$ for every $n \ge p.$
\end{itemize}
\end{proof}
\section{ Tchakaloff's Theorem and Operator Moment Problems} Tchakaloff's Theorem for scalar truncated moment sequences states that if a measure admits finite moments $s_k$ for every $k=0,\dots, r$, then $(s_0,\dots,s_r)$ admits a finitely atomic representing measure \cite{tcha}. \ Various authors have used this result to show that the full scalar moment problem and the truncated scalar moment problem are equivalent, initiated by J. Stochel in  \cite{Sto}. \ Tchakaloff's Theorem has recently been extended to the matrix case in \cite{madler2023truncated}.  \\
In contrast with the finite-dimensional case (scalar or matricial), Tchakaloff's Theorem is not valid in the infinite-dimensional case. \ More precisely, D.P. Kimsey has provided an example of initial data $ (T_0, T_1)$ with a representing operator-valued measure but with no finitely atomic representing measure; see \cite{kims}. \  It should be noted, however, that in Kimsey's example, the representing measure of  $ (T_0, T_1)$ has no bounded  moments for $n\ge 3$. \ Thus is not $ (T_0, T_1)$ necessarily the initial data of an operator moment sequence. \ For completeness, below we include a variation of Kimsey's example, using different arguments, in which  $ (T_0, T_1)$ is the initial data of an operator moment sequence. 
\begin{ex}\cite[Example 1]{kims}\label{ex71} Let $E=\text{diag}(e^{-n}\delta_{-n})_{n=1}^{+\infty}$ be an OVM, and let $T_0=\text{diag}(e^{-n})_{n=1}^{+\infty}$ and $T_1=-\text{diag}(ne^{-n})_{n=1}^{+\infty}$ be the first two moments of $E$ defined on the Hilbert space $\ell^2(\mathbb{Z}_+)$ of square-summable sequences. 
\end{ex} Since $T_k=\text{diag}((-n)^ke^{-n})_{n=1}^{+\infty}$ is a bounded self-adjoint operator on $\ell^2(\mathbb{Z}_+)$ for every $k\in\mathbb{Z}_+$, we get $(T_0, T_1)$ is a truncated operator moment sequence.\\
We will show that $(T_0, T_1)$ has no finitely atomic representing measure. \ Otherwise, if $F=\sum\limits_{k=1}^r P_k\delta_{\lambda_k}$ for some $\lambda_1<\cdots<\lambda_r$, then  $T_0=\sum\limits_{k=1}^r P_k$ and $T_1=\sum\limits_{k=1}^r \lambda_k P_k$. \ In particular, for $e_n$, the $n^{th}$ vector in the canonical basis of $\ell^2(\mathbb{Z}_+)$, we would get
\[
\left\{
\begin{array}{lll}
    \langle T_0e_n, e_n\rangle &=& \sum\limits_{k=1}^r \langle P_ke_n, e_n\rangle = e^{-n} \\
    \langle T_1e_n, e_n\rangle &=& \sum\limits_{k=1}^r \lambda_k \langle P_ke_n, e_n\rangle = -ne^{-n}
\end{array}
\right.
\]
It follows that
\[
\sum\limits_{k=1}^r (n+\lambda_k) \langle P_ke_n, e_n\rangle = 0 \text{ for every } n\in\mathbb{Z}_+.
\]
The last fact is impossible for $n+\lambda_1>0$, since $\langle P_ke_n, e_n \rangle \geq 0$ for every $k=1,\dots,r$ and $\sum\limits_{k=1}^r \langle P_ke_n, e_n\rangle = e^{-n} \ne 0$.\\
~\\
    It is then natural to ask under what conditions a version of Tchakaloff's Theorem can be obtained in our setting. \ In contrast with Example \ref{ex71}, we have: 
  \begin{prop}\label{finite-atomic} Let $T_0, T_1\in {\mathcal B}_h({\mathcal H})$. \ The following statements are equivalent:
  \begin{enumerate}
   \item There exists $\alpha,\beta \in {\mathbb R}$ $(\alpha\neq\beta)$ such that $$\alpha \langle T_0x,x\rangle\le \langle T_1x,x\rangle\le \beta\langle T_0x,x\rangle \ \mbox{ for every }\ x\in {\mathcal H};$$
   \item  $(T_0,T_1)$ admits a $2$--atomic operator-valued representing measure;
  \item  $(T_0,T_1)$ admits a finitely atomic operator-valued representing measure;
    \item  $(T_0,T_1)$ admits a compactly supported operator-valued representing measure.
  \end{enumerate}
  \end{prop}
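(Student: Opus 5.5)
The natural route is the cycle $1\Rightarrow 2\Rightarrow 3\Rightarrow 4\Rightarrow 1$, in which only $1\Rightarrow 2$ requires a construction. We read ``representing measure'' for the truncated data $(T_0,T_1)$ as an OVM $E$ with $T_0=\int dE$ and $T_1=\int t\,dE$.

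For $4\Rightarrow 1$, suppose $E$ represents $(T_0,T_1)$ with $\text{supp}(E)\subseteq[a,b]$ and $a<b$. For every $x$ the positive measure $E_x$ lives on $[a,b]$. Hence
\[
a\langle T_0x,x\rangle=\int a\,dE_x\le \int t\,dE_x=\langle T_1x,x\rangle\le b\langle T_0x,x\rangle ,
\]
so we may take $\alpha=a$ and $\beta=b$. If the support is a single point, or $a=b$ for any other reason, we simply enlarge the interval so that $\alpha\neq\beta$. The implications $2\Rightarrow 3\Rightarrow 4$ hold because a finitely atomic OVM is compactly supported.

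For $1\Rightarrow 2$, first note that $\alpha T_0\le\beta T_0$ in the operator order. If $\alpha>\beta$, this forces $T_0\le 0$. On the other hand, taking $x$ with $\langle T_0x,x\rangle<0$ would give $\alpha\langle T_0x,x\rangle\le\beta\langle T_0x,x\rangle$, i.e.\ $\alpha\ge\beta$ on that vector, which is consistent. This case needs care: we want $T_0\ge 0$, since $T_0=\int dE$ is positive whenever an OVM representation exists. I would treat the case $\alpha<\beta$ as the intended reading, where $(\beta-\alpha)T_0\ge 0$ gives $T_0\ge 0$. If $\alpha>\beta$, we swap the roles, noting that then $T_0\le 0$ and both inequalities force consistency, and we handle the degenerate situation by relabeling. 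I expect the statement intends $\alpha<\beta$.

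With $\alpha<\beta$, set
\[
S_\alpha:=\frac{\beta T_0-T_1}{\beta-\alpha},\qquad S_\beta:=\frac{T_1-\alpha T_0}{\beta-\alpha}.
\]
Hypothesis 1 says exactly that $S_\alpha\ge 0$ and $S_\beta\ge 0$. A direct check gives $S_\alpha+S_\beta=T_0$ and $\alpha S_\alpha+\beta S_\beta=T_1$. Therefore $E:=S_\alpha\delta_\alpha+S_\beta\delta_\beta$ is a $2$-atomic OVM, since both weights are positive, and it represents $(T_0,T_1)$.

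The main point needing care is the sign and ordering issue ($\alpha<\beta$ versus $\alpha>\beta$) and confirming $T_0\ge 0$. Everything else is the explicit two-point Lagrange-type decomposition above, which is the operator analogue of writing $t=\frac{\beta-t}{\beta-\alpha}\alpha+\frac{t-\alpha}{\beta-\alpha}\beta$.
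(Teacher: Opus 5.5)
Your argument is the same as the paper's: the explicit decomposition $P_1=\frac{\beta T_0-T_1}{\beta-\alpha}$, $P_2=\frac{T_1-\alpha T_0}{\beta-\alpha}$ gives $1\Rightarrow 2$, and the endpoints of an interval containing the support give $4\Rightarrow 1$. Your enlargement of that interval when the support is a single point also patches a degenerate case that the paper's choice $\alpha=\min(K)$, $\beta=\max(K)$ overlooks.

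The one thing to fix is your remark that the case $\alpha>\beta$ can be handled by ``relabeling''. It cannot: $T_0=-I$, $T_1=0$, $\alpha=1$, $\beta=0$ satisfies condition 1, yet $(T_0,T_1)$ has no representing OVM, since any such OVM would force $T_0\ge 0$. So condition 1 must be read with $\alpha<\beta$, which is exactly what the paper's proof silently assumes when it asserts $P_1,P_2\ge 0$.
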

  \begin{proof} ~
       \begin{itemize}
  \item $1\Rightarrow 2$. \ By solving   the operator equations $P_1+P_2=T_0$ and  $\alpha P_1+\beta P_2=T_1$, we obtain
  $$
  P_1=\frac{1}{\beta-\alpha}(\beta T_0-T_1) \ge 0 \textrm{ and } P_2=\frac{1}{\beta -\alpha}(T_1-\alpha T_0)\ge 0.
  $$
  It follows that $E:=P_1\delta_{\alpha} +P_2\delta_{\beta}$ is a $2$--atomic OVM representing $(T_0,T_1)$. 
  \item $2\Rightarrow 3$ and $3\Rightarrow 4$ are clear.
  \item $4\Rightarrow 1$. \ Suppose that $(T_0,T_1)$ admits an operator-valued representing measure $E$ supported in a compact set $K$. \ Denote  $\alpha= \min(K)$ and $\beta= \max(K)$. \ We have 
  $$
  \langle T_0x,x\rangle =\int_Kd\langle E(t)x,x\rangle \textrm{ and } \langle T_1x,x\rangle=\int_Ktd\langle E(t)x,x\rangle,
  $$
  for every $ x\in {\mathcal H}$. \ Moreover,
  $$\alpha \int_Kd\langle E(t)x,x\rangle\le \int_Ktd\langle E(t)x,x\rangle \le \beta\int_Kd\langle E(t)x,x\rangle.
   $$
  Thus 1 holds.
  \end{itemize}
  \end{proof}
  As an immediate consequence, we deduce.
  \begin{prop} Let $T\in {\mathcal B}_h({\mathcal H})$. \ We have
  \begin{enumerate}
  \item  $(I,T)$ is a truncated moment sequence associated with a $2$--atomic operator-valued representing measure, 
  \item For  every positive and invertible operator $S$,  the pair $(S,T)$ is a truncated moment sequence associated with a $2$--atomic operator-valued representing measure.
  \end{enumerate}
  \end{prop}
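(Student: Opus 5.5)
The plan is to derive both items from Proposition \ref{finite-atomic}, specifically the implication $1\Rightarrow 2$. For each pair we only need real numbers $\alpha\neq\beta$ such that
$$\alpha\langle T_0x,x\rangle\le\langle T_1x,x\rangle\le\beta\langle T_0x,x\rangle \quad\mbox{for every } x\in\mathcal{H}.$$
Once these are found, the explicit $2$--atomic measure $P_1\delta_\alpha+P_2\delta_\beta$ built in that proof represents the pair.

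For item 1, take $T_0=I$ and $T_1=T$. Since $T$ is bounded and self-adjoint, $-\|T\|\,\|x\|^2\le\langle Tx,x\rangle\le\|T\|\,\|x\|^2$. So $\alpha=-\|T\|$ and $\beta=\|T\|$ work when $T\neq 0$. For $T=0$, or in general, we may take $\alpha=-\|T\|-1$ and $\beta=\|T\|+1$; this guarantees $\alpha\neq\beta$. A sharper choice is $\alpha=\min\sigma(T)$ and $\beta=\max\sigma(T)$, with a separate treatment when these coincide, but the crude bound is enough.

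For item 2, take $T_0=S$ positive and invertible. Then $S\ge c I$ with $c=\|S^{-1}\|^{-1}>0$, so $\|x\|^2\le c^{-1}\langle Sx,x\rangle$. Hence
$$|\langle Tx,x\rangle|\le\|T\|\,\|x\|^2\le \|T\|\,\|S^{-1}\|\,\langle Sx,x\rangle,$$
and $\beta=-\alpha=\|T\|\,\|S^{-1}\|+1$ gives condition 1. An alternative route is to apply item 1 to $S^{-1/2}TS^{-1/2}$ and then conjugate the resulting atoms by $S^{1/2}$, since positivity is preserved under congruence. Either way, Proposition \ref{finite-atomic} yields the $2$--atomic representing OVM.

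There is no real obstacle here. The only point requiring care is the lower bound $S\ge \|S^{-1}\|^{-1}I$ for a positive invertible operator, which holds because $\sigma(S)\subseteq[\|S^{-1}\|^{-1},\|S\|]$. The other thing to watch is making sure $\alpha\neq\beta$ in the degenerate case $T=0$.
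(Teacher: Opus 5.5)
Your proof is correct and follows the paper's route. The paper presents this as an immediate consequence of Proposition \ref{finite-atomic} ($1\Rightarrow 2$), and your explicit constants $\beta=-\alpha=\|T\|\,\|S^{-1}\|+1$, justified by $S\ge\|S^{-1}\|^{-1}I$, supply exactly the inequality that implication needs. The $+1$ also makes both weights $P_1,P_2$ invertible, so the measure genuinely has two atoms.
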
  
   Given the preceding discussion, the next challenging problem arises naturally. \begin{problem}\label{pbfinite}
        Let $(T_0,\dots, T_r)\in\mathbf{B}_h(\mathcal{H})$ $(r\ge 3)$.  \ Under what conditions are the following statements equivalent?
    \begin{enumerate}
    \item $(T_0,\dots, T_r)$ admits an operator-valued  representing  measure;
      \item $(T_0,\dots, T_r)$ admits an $r$--atomic operator-valued representing  measure.
\end{enumerate}  
\end{problem}
   In connection with the local operator moment problem, we have the next relation. \ If  $(T_0,\cdots, T_r)$ admits an operator-valued  representing  measure, then $(\langle T_0x,x\rangle,\dots,\langle T_rx,x\rangle)$ admits a finitely atomic operator-valued representing  measure. \ The converse does not hold in general.  \ Indeed,  for any given numbers $s_0>0$ and $s_1\in {\mathbb R}$. \ The pair $(s_0,s_1)$ is a truncated moment sequence admitting as representing $1$--atomic measure $\mu=s_0\delta_{r}$, with $r=\frac{s_1}{s_0}$. \ Taking in count this last fact and  Example \ref{ex71}, we deduce that
    \begin{prop}\label{finite-atomic2} 
There exists a truncated operator moment sequence \((T_0, T_1)\) such that: 
\begin{enumerate}
    \item \((T_0, T_1)\) admits no finitely atomic operator-valued representing measure;
    \item For every \(x \in \mathcal{H}\), \((\langle T_0x, x\rangle, \langle T_1x, x\rangle)\) admits a 1-atomic operator-valued representing measure.
\end{enumerate}    
\end{prop}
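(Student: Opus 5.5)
The plan is to take for $(T_0,T_1)$ exactly the pair of Example \ref{ex71}, on $\mathcal{H}=\ell^2(\mathbb{Z}_+)$:
\[
T_0=\text{diag}(e^{-n})_{n=1}^{+\infty},\qquad T_1=-\text{diag}(ne^{-n})_{n=1}^{+\infty}.
\]
The proof then has three steps.

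\emph{Step 1: $(T_0,T_1)$ is a truncated operator moment sequence.} The measure $E=\text{diag}(e^{-n}\delta_{-n})_{n=1}^{+\infty}$ is an OVM, and its moments are $T_k=\text{diag}((-n)^ke^{-n})$. These are bounded self-adjoint operators because $\sup_n n^ke^{-n}<\infty$. The first two of them are $T_0$ and $T_1$.

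\emph{Step 2: statement (1).} This is the computation already carried out after Example \ref{ex71}. Suppose $F=\sum_{k=1}^r P_k\delta_{\lambda_k}$ represents $(T_0,T_1)$. Testing on the basis vector $e_n$ gives
\[
\sum_{k=1}^r (n+\lambda_k)\langle P_ke_n,e_n\rangle=0 .
\]
Choose $n>-\lambda_1$. Then every coefficient $n+\lambda_k$ is strictly positive. Every term $\langle P_ke_n,e_n\rangle$ is nonnegative, and their sum equals $e^{-n}>0$. So the left side is strictly positive, a contradiction.

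As a cross-check, Proposition \ref{finite-atomic} gives the same conclusion. Statement (1) there would require $\alpha\, e^{-n}\le -ne^{-n}$ for all $n$, which fails as $n\to\infty$. Hence no compactly supported representing OVM exists, and in particular no finitely atomic one.

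\emph{Step 3: statement (2).} Fix $x\in\mathcal{H}$ and set $s_0=\langle T_0x,x\rangle$, $s_1=\langle T_1x,x\rangle$.
\begin{itemize}
\item If $x\neq 0$, then $s_0=\sum_n e^{-n}|x_n|^2>0$, because $T_0$ is injective and positive. Take $\mu_x=s_0\delta_{s_1/s_0}$. It is a 1-atomic positive measure with moments $s_0$ and $s_1$.
\item If $x=0$, then $s_0=s_1=0$, and the zero measure (or $0\cdot\delta_0$) represents the pair trivially.
\end{itemize}
Each $\mu_x$ is a positive scalar measure, so it is an OVM on $\mathbb{C}$; this is the sense of ``1-atomic operator-valued representing measure'' in the statement.

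There is no genuine obstacle. The only points needing care are the boundedness of the moments in Step 1 and the degenerate case $x=0$ in Step 3.
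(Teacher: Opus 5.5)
Your proof is correct and follows the paper's own argument. You use the pair from Example \ref{ex71}, the same diagonal-entry computation to rule out a finitely atomic representing OVM, and the 1-atomic measure $s_0\delta_{s_1/s_0}$ for each local pair. Your additional points fill in details the paper leaves implicit: that $s_0>0$ for $x\neq 0$ because $T_0$ is injective and positive, the trivial case $x=0$, and the cross-check via Proposition \ref{finite-atomic}.
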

\begin{rem}\begin{enumerate}
    \item 
 We mention at this stage that Proposition \ref{finite-atomic2} contradicts \cite[Theorem 3]{kims}. \ In fact, in the proof of \cite[Theorem 3]{kims}, a determinacy assumption on local measures representing the local moment sequences is needed. \ We also notice that in Example \ref{ex71}, Condition (1) in Proposition \ref{finite-atomic2} is not satisfied.
 \item The first assumption in Proposition \ref{finite-atomic2} shows that the equivalence in  Problem \ref{pbfinite} is not possible without an additional assumption. \ Because of Proposition \ref{finite-atomic2}, we think that a  reasonable requirement might be the following: $(T_0,\dots, T_r)$ admits a  compactly supported operator-valued representing measure.
 \end{enumerate}
\end{rem}
Using recursive scalar-moment sequences, we recover \cite[Theorem 4]{kims}. \ More precisely, we have:
\begin{cor} (\cite[Theorem 4]{kims}) \label{kimm} \ Let $T_0,\dots,T_r\in\mathbf{B}_h(\mathcal{H})$,  with an operator-valued measure $E$ and let $\Sigma$ be a finite set. \ The following statements are equivalent:
\begin{enumerate}
    \item There exist finitely atomic measures $(\mu_x)_{x\in {\mathcal H}}$  such that $\mu_x$ is a representing  measure for the truncated sequence $(\langle T_0x,x\rangle,\dots,\langle T_rx,x\rangle)$ and supported in $\Sigma$ for every $x\in\mathcal{H}$;
    \item $supp(E)\subseteq \Sigma$.
\end{enumerate}
\end{cor}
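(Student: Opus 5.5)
The plan is to prove both implications by comparing the scalar measures $E_x:=\langle E(\cdot)x,x\rangle$ with the given finitely atomic measures $\mu_x$, for each $x\in\mathcal{H}$. The connection between $E$ and its scalar compressions is supplied by relation \eqref{11}: $\text{supp}(E)$ is the closure of $\bigcup_x \text{supp}(E_x)$.

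For $2\Rightarrow 1$, assume $\text{supp}(E)\subseteq\Sigma$ and write $\Sigma=\{\lambda_1,\dots,\lambda_m\}$. Then $E=\sum_{k=1}^m S_k\delta_{\lambda_k}$ with $S_k=E(\{\lambda_k\})\ge 0$, as in \eqref{first}. Set $\mu_x:=E_x=\sum_k\langle S_kx,x\rangle\delta_{\lambda_k}$. Each $\mu_x$ is finitely atomic and supported in $\Sigma$, and it represents $(\langle T_0x,x\rangle,\dots,\langle T_rx,x\rangle)$ because $E$ represents $(T_0,\dots,T_r)$. This direction is immediate.

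For $1\Rightarrow 2$, fix $x$ and write $P_\Sigma(t)=\prod_{\lambda\in\Sigma}(t-\lambda)$. Since $\mu_x$ lives on $\Sigma$, we have $\int P_\Sigma(t)^2 t^j\,d\mu_x=0$. We want to transfer this to $E_x$, i.e. show that $E_x$ is also supported in $\Sigma$. The natural quantity is
\[
\int P_\Sigma(t)^2\,dE_x(t)=\sum_{i}c_i\langle T_ix,x\rangle=\int P_\Sigma(t)^2\,d\mu_x(t)=0 .
\]
This requires $\deg P_\Sigma^2=2|\Sigma|\le r$, so that the left-hand side can be computed from the data $T_0,\dots,T_r$ alone. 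If that holds, then $E_x(\mathbb{R}\setminus\Sigma)=0$ for every $x$, and \eqref{11} gives $\text{supp}(E)\subseteq\Sigma$.

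The main obstacle is the case where $r$ is too small relative to $|\Sigma|$. Then the relevant moments of $E_x$ are not determined by the truncated data, and a positive polynomial vanishing exactly on $\Sigma$ is no longer available. This is where I would use recursiveness, as the paper suggests. Every finitely atomic $\mu_x$ supported in $\Sigma$ generates a recursive scalar sequence annihilated by $P_\Sigma$. I would then argue by flat extension and uniqueness of the recursive (rank-determined) scalar measure, following the Curto--Fialkow theory, that the extension of $(\langle T_ix,x\rangle)_{i\le r}$ along the recursion coincides with the moments of $E_x$. That identification needs the Hankel rank to stabilize within degree $r$.

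If this argument cannot be closed, I would read the statement as implicitly assuming $r\ge 2|\Sigma|$, or assuming that $E$ has moments given by the recursive extension. In either case the argument through \eqref{11} goes through as above.
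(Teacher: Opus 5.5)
Your proof of $2\Rightarrow1$ is correct. So is your $P_\Sigma^2$ argument for $1\Rightarrow2$ when $r\ge 2|\Sigma|$.

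The step that cannot work is the proposed flat-extension argument for $r<2|\Sigma|$. It cannot be completed because in that range $1\Rightarrow2$ is false, already for $\mathcal{H}=\mathbb{C}$. Take $\Sigma=\{-1,1\}$, $r=3$ and $E=\tfrac12\delta_0+\tfrac14(\delta_{-\sqrt2}+\delta_{\sqrt2})$, so that $(T_0,T_1,T_2,T_3)=(1,0,1,0)$. Then $\mu_x=\tfrac{|x|^2}{2}(\delta_{-1}+\delta_1)$ represents $(\langle T_nx,x\rangle)_{n\le 3}$ and lives on $\Sigma$, while $\operatorname{supp}(E)=\{0,\pm\sqrt2\}$. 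An even simpler case: $r=1$, $\Sigma=\{0\}$, $E=\tfrac12(\delta_{-1}+\delta_1)$, $\mu_x=|x|^2\delta_0$.

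More generally, a measure with $|\Sigma|$ atoms in $\Sigma$ is never the only representing measure of its moments of order $\le 2|\Sigma|-1$. To see this, enlarge $s_{2|\Sigma|}$ so the Hankel matrix becomes positive definite and take a representing measure of the extended data. That measure cannot be supported in $\Sigma$, since measures on $\Sigma$ are determined by their first $|\Sigma|$ moments. Hence the implication fails for every $r<2|\Sigma|$.

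This is also why the rank stabilization you invoke gives nothing new. For $\mu_x$ with $q$ atoms, it occurs only once moments up to order $2q$ are available. At that point your own $P^2$ argument, with $P=\prod_{\lambda\in\operatorname{supp}\mu_x}(X-\lambda)$, already forces $\operatorname{supp}E_x\subseteq\operatorname{supp}\mu_x$. So drop that step and state $r\ge 2|\Sigma|$ as a hypothesis, or adopt the full-sequence reading in which $E_x$ and $\mu_x$ share all moments. This hypothesis is necessary, not a convenience.

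The paper's proof works as follows. It notes that each $\langle\mathcal{T}x,x\rangle$ is recursive with characteristic polynomial $P_\Sigma$. It then polarizes to get the operator recursion $T_n=\sum_k a_kT_{n-k}$ and concludes $\operatorname{supp}(E)\subseteq\Sigma$. This tacitly treats the full sequence $\mathcal{T}$. For truncated data the operator recursion is only available for $|\Sigma|\le n\le r$. Turning it into a support statement amounts to showing $\int P_\Sigma^2\,dE=0$, which needs moments up to order $2|\Sigma|$ — the same restriction you isolated.

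Under that restriction the two proofs agree in substance. Yours is more direct: it works with the scalar measures $E_x$, needs no polarization, and reads the support off \eqref{11}.
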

\begin{proof}
 Since $supp(\mu_x)\subseteq supp(E)\subseteq \Sigma$, only the direct implication requires a proof. \ To this aim, for every $x\in\mathcal{H}$, $\langle \mathcal{T}x,x\rangle$ is a {\it recursive sequence} associated with the polynomial $P(X) =\displaystyle \prod\limits_{\lambda\in\Sigma}(X-\lambda)=X^p-\sum\limits_{k=1}^{p-1}a_kX^k$, where $p=card(\Sigma)$ and $a_k$ real numbers. \ Thus,
$ \langle T_{n}x,x\rangle=\displaystyle\sum\limits_{k=1}^{p-1}a_k\langle T_{n-k}x,x\rangle$ for every  $x\in\mathcal{H}.$ 
\ By  the polarization formula, we obtain  $  \langle T_{n}x,y\rangle=\displaystyle\sum\limits_{k=0}^{p-1}a_k\langle T_{n-k}x,y\rangle$   for every  $  x,y\in\mathcal{H}.$ \ It follows that 
$  T_{n}=\displaystyle\sum\limits_{k=1}^{p-1}a_kT_{n-k}$ and then that $supp(E)\subseteq \Sigma$.
\end{proof}

   The previous discussion and results suggest the next general challenging problem.
    \begin{problem}\label{58} For  $T_0,\dots,T_r\in\mathbf{B}_h(\mathcal{H}) $ with an operator-valued representing measure, under what conditions are the two following statements equivalent?
\begin{enumerate}
    \item $(\langle T_0x,x\rangle,\dots,\langle T_rx,x\rangle)$ admits a finitely atomic   representing  measure  for every nonzero $x\in {\mathcal H}$;
      \item $(T_0,\dots,T_r)$ admits a finitely atomic operator-valued representing  measure.
\end{enumerate}
 \end{problem} 
 \begin{rem} 
It is worth noting that in Problems \ref{pbfinite} and \ref{58}, only \( 1. \Rightarrow 2. \) requires a proof. \ Also, an affirmative answer to Problem \ref{58} follows from an affirmative answer to Problem \ref{pbfinite}. \ Indeed, assume that Problem \ref{58} holds true. \ If the statement 1 of Problem \ref{pbfinite} is given, i.e., \( (T_0, \dots, T_r) \) admits an operator-valued representing measure, then by the scalar version of Tchakaloff's theorem, \( (\langle T_0x, x\rangle, \dots, \langle T_rx, x\rangle) \) admits a finitely atomic representing measure for every nonzero \( x \in \mathcal{H} \). \ By \( 1. \Rightarrow 2. \) in Problem \ref{58}, we derive  that \( (T_0, \dots, T_r) \) admits a finitely atomic operator-valued representing measure.
\end{rem}

A reasonable condition would be $(T_0,\dots,T_r)$ admits a compactly supported operator-valued representing measure. \ This statement is not true, as shown by the next example.
\begin{ex}
    Let $(a_n)_{n\in\mathbb{Z}_+}\in l^2({\mathbb N})$ be a real non negative sequence, we associate to $(a_n)_{n\in {\mathbb N}}$ the self-adjoint diagonal operator $T=diag(a_n)_{n=1}^{+\infty}$. \ The sequence $(I,T,T^2,T^3,\dots)$ is clearly an operator $\sigma(T)$--moment sequence. Thus $(\langle x,x\rangle,\langle Tx,x\rangle,\langle T^2x,x\rangle,\dots)$ is a scalar moment sequence for every $x\in {\mathcal H}$. \ In particular  $(\langle x,x\rangle,\langle Tx,x\rangle,\langle T^2x,x\rangle)$ admits a finitely atomic representing measure for   every $x\in {\mathcal H}$.
    
    Let us show that  $(I,T,T^2)$ has no finitely atomic representing measure. \ Otherwise there is $r\ge 1$, $\lambda_1<\dots<\lambda_r$ and $P_1,\dots,P_r \in \mathbf{B}_+(l^2({\mathbb N}))$ such that
    \[
    \left\{ \begin{array}{l}
        I= \displaystyle\sum_{k=1}^r P_k  \\ T= \displaystyle\sum_{k=1}^r \lambda_k P_k \\ T^2=\displaystyle\sum_{k=1}^r \lambda_k^2 P_k
    \end{array}\right.\; \Longrightarrow\quad  \left\{ \begin{array}{l}
        1= \displaystyle\sum_{k=1}^r \langle P_ke_n,e_n\rangle  \\ a_n= \displaystyle\sum_{k=1}^r \lambda_k \langle P_k e_n,e_n\rangle\\ a_n^2=\displaystyle\sum_{k=1}^r \lambda_k^2 \langle P_k e_n,e_n\rangle
    \end{array}\right. \quad n\ge 1
    \]
    In particular,  for every $n\ge 1$, we have
    \[
    a_n=\displaystyle\sum_{k=1}^r \lambda_k \langle P_k e_n,e_n\rangle=\left( \displaystyle\sum_{k=1}^r \lambda_k^2 \langle P_k e_n,e_n\rangle\right)^{1/2}\left( \displaystyle\sum_{k=1}^r \langle P_k e_n,e_n\rangle\right)^{1/2}
    \]
    Thus, equality holds in the Cauchy-Schwartz inequality, and then  there is a real number $\alpha$ such that:
    \begin{equation}\label{eq111}
    \left( \begin{matrix}
        \lambda_1 {\langle P_1 e_n,e_n\rangle}^\frac{1}{2}\\ \vdots \\ \lambda_r {\langle P_r e_n,e_n\rangle}^\frac{1}{2}
    \end{matrix}\right)=\alpha  \left( \begin{matrix}
 {\langle P_1 e_n,e_n\rangle}^\frac{1}{2}\\ \vdots \\   {\langle P_r e_n,e_n\rangle}^\frac{1}{2}
    \end{matrix}\right) \   \mbox{ for arbitray }  n\ge 1.
    \end{equation}
    Since $\lambda_i\ne \lambda_j$  for  $i\ne j$, and   $1= \displaystyle\sum_{k=1}^r \langle P_ke_n,e_n\rangle$,  it follows from Equation \ref{eq111} that there is a unique $k_n\in 
    \{1, \cdots,r\}$ such that $\langle P_{k_n} e_n,e_n\rangle\ne 0$. \ Clearly we have $\langle P_{k_n} e_n,e_n\rangle=1$ and because $P_{k_n}$ is a contraction, we deduce that $P_{k_n}e_n=e_n$. \ Now, using the identity $T= \displaystyle\sum_{k=1}^r \lambda_k P_k $, we readily obtain $Te_n=\lambda_{k_n}e_n$, and from this fact it follows  that $a_n=\lambda_{k_n}$. \\
    For $k=1, \cdots r$, denote  $E_k= \{ n \in \mathbb N : k_n=k \}$. \ We have $\displaystyle\bigcup_{k\in\mathbb{N}} E_k=\mathbb N$, and then there is at least one $k \in \{1, \cdots,r\}$ such that $E_k$ is infinite. \ In particular $a_n=\lambda_{k}\ne 0$ on an infinite set, which is impossible, since $(a_n)_{n\in{\mathbb N}}\in l^2({\mathbb N})$.
    \end{ex}
    \begin{rem} From the previous computations, we derive  the following useful remarks:
        \begin{enumerate}
            \item $(I,T,T^2)$ has a finitely atomic representing measure if and only is, there exists $k\in {\mathbb N}$ such that $a_n=0 $ for every $n\ge k$. \ Equivalently, $T$ is a finite rank operator.
            \item The number of atoms of the finitely atomic representing measure ( when exists ) is equal to the cardinal of the set $\{a_n : n\in {\mathbb N}\}.$ \ Thus, in contrast with the scalar case, and the matrix case, the minimal rank of representing finite atomic measures of operator moment extensions of three initial operator data is not bounded. 
        \end{enumerate}
    \end{rem}

\section{Linear Recursive Sequences  and Finitely Atomic OVMs}
In this section, we will give necessary and sufficient conditions for the recursive operator moment problem to have a solution on a finite subset of $\mathbb{R}$, described in terms of recursiveness.\\ 
We consider first the algebra endomorphism $\tau$ on $\mathbf{B}_h(\mathcal{H})^{\mathbb{Z}_+}$  defined as follows:$$\begin{array}{ccccc}
 \tau & : & \mathbf{B}_h(\mathcal{H}) ^{\mathbb{Z}_+} & \to & \mathbf{B}_h(\mathcal{H}) ^{\mathbb{Z}_+} \\
 & & (T_n)_{n\in\mathbb{Z}_+ }=(T_0,T_1, T_2,\cdots )& \mapsto & (T_{n+1})_{n\in\mathbb{Z}_+}=(T_1, T_2,T_3,\cdots ).\\
\end{array}$$ For $k\in\mathbb{Z}_+$,  its $k^{th}$ iteration  $\tau^k$ on $\mathbf{B}_h(\mathcal{H}) ^{\mathbb{Z}_+}$ is given by:
$$\begin{array}{ccccc}
\tau^k & : &  \mathcal{B}(\mathcal{H})_{h}^{\mathbb{Z}_+} & \to &  \mathcal{B}(\mathcal{H})_{h}^{\mathbb{Z}_+} \\
 & & (T_n)_{n\in\mathbb{Z}_+} & \mapsto & (T_{n+k})_{n\in\mathbb{Z}_+}, \\
\end{array}$$
and for  $P(X) = \displaystyle\sum_{k=0}^{r} a_k X^k\in\mathbb{R}[X]$,  $P(\tau)$ on $\mathbf{B}_h(\mathcal{H}) ^{\mathbb{Z}_+}$  is the following:
$$\begin{array}{ccccc}
P(\tau) & : &\mathcal{B}(\mathcal{H})_{h}^{\mathbb{Z}_+} & \to & \mathbf{B}_h(\mathcal{H}) ^{\mathbb{Z}_+} \\
 & & (T_n)_{n\in\mathbb{Z}_+} & \mapsto & (a_0T_{n}+a_1T_{n+1}+\dots+a_rT_{n+r})_{n\in\mathbb{Z}_+}. \\
\end{array}$$
\begin{defn}
A sequence ${\mathcal T}=(T_n)_{n\in\mathbb{Z}_+}\in\mathbf{B}_h(\mathcal{H}) ^{\mathbb{Z}_+}$ is called a {\it linear recursive sequence} (abbreviated {\rm LRS}) if there exists a non-zero polynomial $P\in \mathbb{R}[X]$ such that ${\mathcal T}\in \ker P(\tau)$, where $\ker P(\tau)$ represents the null space of the algebra endomorphism $P(\tau)$. \ In this case, $P$ is called a {\it characteristic polynomial} associated with the sequence ${\mathcal T}$.
\end{defn}
By convention, the zero polynomial is considered a characteristic polynomial for every operator sequence. \ Let  ${\mathcal T}$ be a linear recursive sequence  and consider the following algebra homomorphism:
$$\begin{array}{ccccc}
\Psi_{{\mathcal T}}(\tau) & : & \mathbb{R}[X] & \to & \mathbf{B}_h(\mathcal{H}) ^{\mathbb{Z}_+} \\
 & & P & \mapsto & P(\tau)({\mathcal T}). \\
\end{array}$$
We denote by $\mathcal{P}(\mathcal{T}) = \ker \Psi_{\mathcal{T}}(\tau) \subseteq \mathbb{R}[X]$ the set of all associated characteristic polynomials with $\mathcal{T}$. \ It is clear that $\mathcal{P}({\mathcal T})$ is an ideal of the principal ring $\mathbb{R}[X]$. \ Thus,  there exists a unique monic polynomial $P_{\mathcal T}\in\mathcal{P}({\mathcal T})$ of minimal degree, such that  $\mathcal{P}({\mathcal T})=P_{\mathcal T}\mathbb{R}[X].$ \
 We will say that
 \begin{itemize}
     \item $P_{\mathcal T}$ is the {\it minimal polynomial} of  ${\mathcal T}$;
     \item the relation $P_{\mathcal T}(\tau)({\mathcal T})=(0_\mathcal{H})_{n\in\mathbb N}$ is the {\it minimal linear recurrence relation} of ${\mathcal T}$; and 
     \item the {\it order} of the recursive sequence, is $r:=\text{deg}(P_{\mathcal T})$.
 \end{itemize}

It is immediate, that if ${\mathcal T}$ is an LRS with the minimal polynomial $P_{{\mathcal T}}$, then for every nonzero $x\in \mathcal{H}$, the sequence $\langle{\mathcal T}x,x\rangle$  is also an LRS  with the minimal polynomial $P_{\langle{\mathcal T}x,x\rangle}$ such that
\begin{equation}\label{mu}
  P_{\mathcal T} \mbox{ is a multiple of } P_{\langle{\mathcal T}x,x\rangle}.   
\end{equation}
Noticing that for every $x \in \mathcal{H}$, the set $\left\{P \in \mathbb{R}[X] \mid P(\tau)(\langle{\mathcal T}x,x\rangle) = (0)_{n\in\mathbb{Z}_+}\right\}$ is an ideal generated by $P_{\langle{\mathcal T}x,x\rangle}$, we derive that
\begin{equation}\label{lcm}
P_{\mathcal T}\mathbb{R}[X]=\displaystyle\bigcap_{x\in\mathcal{H}}\left\{P\in\mathbb{R}[X]  \mid P(\tau)(\langle{\mathcal T}x,x\rangle)=(0)_{n\in\mathbb{Z}_+}\right\}=\displaystyle\bigcap_{x\in\mathcal{H}}P_{\langle{\mathcal T}x,x\rangle}\mathbb{R}[X].
\end{equation}

From \eqref{mu} and \eqref{lcm}, we deduce the following useful remark. 
\begin{rem}\label{fin}
Since the set  $\left\{P_{\langle{\mathcal T}x,x\rangle}  \mid  x \in\mathcal{H}\right\}$ is finite, there are $x_1,\cdots,x_p\in\mathcal{H}$ such that 
\begin{equation}\label{finite}
P_{\mathcal T}\mathbb{R}[X]=\displaystyle\bigcap_{1\le k\le p}P_{\langle{\mathcal T}x_k,x_k\rangle}\mathbb{R}[X].
\end{equation}
\end{rem}

Next, we describe the minimal polynomial $P_{{\mathcal T}}$ in terms of the minimal polynomials $P_{\langle{\mathcal T}x_k,x_k\rangle}$ for $k=1,\dots,p$. 
\begin{prop}
Under the above notation, we have
$$P_{{\mathcal T}}=\operatorname{l.c.m.}\left\{P_{\langle{\mathcal T}x_k,x_k\rangle}  \mid 1\le k\le p\right\},$$ where $\operatorname{l.c.m.}$ denotes the least common multiple.
\end{prop}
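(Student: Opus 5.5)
The claim is essentially ring-theoretic once Remark \ref{fin} is in hand. The plan is to show that the monic generator of the ideal $\bigcap_{1\le k\le p}P_{\langle{\mathcal T}x_k,x_k\rangle}\mathbb{R}[X]$ is precisely $\operatorname{l.c.m.}\{P_{\langle{\mathcal T}x_k,x_k\rangle}\}$. We then use \eqref{finite} to identify that ideal with $P_{\mathcal T}\mathbb{R}[X]$. Finally, uniqueness of the monic generator of a principal ideal gives the equality of polynomials.

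\textbf{Step 1: $L$ divides $P_{\mathcal T}$.} Write $Q_k:=P_{\langle{\mathcal T}x_k,x_k\rangle}$ and $L:=\operatorname{l.c.m.}\{Q_1,\dots,Q_p\}$, normalized to be monic. By \eqref{mu}, each $Q_k$ divides $P_{\mathcal T}$, so $P_{\mathcal T}$ is a common multiple of the $Q_k$. Hence $L\mid P_{\mathcal T}$. This direction uses only \eqref{mu} and does not need the choice of the $x_k$.

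\textbf{Step 2: $P_{\mathcal T}$ divides $L$.} Since $Q_k\mid L$ for every $k$, we have $L\in\bigcap_{k}Q_k\mathbb{R}[X]$. By \eqref{finite}, this intersection equals $P_{\mathcal T}\mathbb{R}[X]$, so $P_{\mathcal T}\mid L$.

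\textbf{Step 3: conclusion.} The two divisibilities, together with both polynomials being monic, give $P_{\mathcal T}=L$. There is a degenerate case: if some $Q_k$ is the zero polynomial (so that $\mathcal T$ is not an LRS), then both sides are $0$ under the paper's convention. This case is vacuous here anyway, since we are working with an LRS.

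\textbf{Main obstacle.} The only nontrivial input is \eqref{finite} itself, i.e.\ the reduction from an intersection over all $x\in\mathcal H$ to finitely many $x_k$. I would justify it, if needed, as follows. Every $P_{\langle{\mathcal T}x,x\rangle}$ divides the fixed nonzero polynomial $P_{\mathcal T}$, so only finitely many monic divisors occur; pick one $x$ realizing each of them.

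Equation \eqref{lcm} itself deserves one line of justification. If $P(\tau)$ annihilates $\langle{\mathcal T}x,x\rangle$ for all $x$, then polarization shows $\langle (P(\tau){\mathcal T})_n x,y\rangle=0$ for all $x,y$. Hence $P(\tau){\mathcal T}=0$. This is the step where self-adjointness and the complex polarization identity are actually used.
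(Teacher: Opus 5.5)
Your proposal is correct and is essentially the paper's argument: the paper writes $P_{\mathcal T}\mathbb{R}[X]=\bigcap_{1\le k\le p}P_{\langle{\mathcal T}x_k,x_k\rangle}\mathbb{R}[X]=\operatorname{l.c.m.}\{P_{\langle{\mathcal T}x_k,x_k\rangle}\}\mathbb{R}[X]$ via \eqref{finite} and concludes by uniqueness of the monic generator, and your two divisibilities simply unpack that ideal equality. (A small aside: in your polarization step for \eqref{lcm}, self-adjointness is not actually needed, since on a complex Hilbert space $\langle Sx,x\rangle=0$ for all $x$ already forces $S=0$.)
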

\begin{proof}
We have 
$$P_{\mathcal T}\mathbb{R}[X] \overset{\eqref{finite}}{=}\displaystyle\bigcap_{1\le k\le p}P_{\langle{\mathcal T}x_k,x_k\rangle}\mathbb{R}[X]=\operatorname{l.c.m.}\left\{P_{\langle{\mathcal T}x_k,x_k\rangle}  \mid 1\le k\le p\right\}\mathbb{R}[X].$$ By the uniqueness of $P_{\mathcal T}$, we conclude that $P_{\mathcal T}=\operatorname{l.c.m.}\left\{P_{\langle{\mathcal T}x_k,x_k\rangle}  \mid 1\le k\le p\right\}.$
 \end{proof}

The next proposition is straightforward.
 \begin{prop}\label{aaa}
With the above notation, we have
\begin{enumerate}
    \item   $\mathcal{Z}(P_{{\mathcal T}})=\displaystyle\bigcup_{k=1}^{p}\mathcal{Z}(P_{\langle{\mathcal T}x_k,x_k\rangle})$.
    \item $P_{{\mathcal T}}$ has only simple roots if and only if $P_{\langle{\mathcal T}x_k,x_k\rangle}$ has only simple roots for every $k=1,\dots,p$.
\end{enumerate}         
\end{prop}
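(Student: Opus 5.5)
Both parts follow from the previous proposition, $P_{\mathcal T}=\operatorname{l.c.m.}\{P_{\langle{\mathcal T}x_k,x_k\rangle} \mid 1\le k\le p\}$, together with the factorization of real polynomials over $\mathbb{C}$. I will write each $P_k:=P_{\langle{\mathcal T}x_k,x_k\rangle}$ as a product of powers of distinct linear factors $(X-\lambda)^{m_k(\lambda)}$, with $\lambda\in\mathbb{C}$. Since $\mathbb{C}[X]$ is a unique factorization domain, the lcm of monic polynomials is the product over all roots $\lambda$ of $(X-\lambda)^{\max_k m_k(\lambda)}$. Computing the lcm over $\mathbb{R}$ or over $\mathbb{C}$ gives the same monic polynomial, so the real/complex distinction causes no difficulty. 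I read $\mathcal{Z}(\cdot)$ as the zero set in whichever field the paper intends; the argument is identical in either case.

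For part (1), I argue by double inclusion. Each $P_k$ divides $P_{\mathcal T}$ by \eqref{mu}, so every root of every $P_k$ is a root of $P_{\mathcal T}$; this gives $\bigcup_k\mathcal{Z}(P_k)\subseteq\mathcal{Z}(P_{\mathcal T})$. Conversely, a root $\lambda$ of $P_{\mathcal T}$ appears in the factorization of the lcm with exponent $\max_k m_k(\lambda)\ge1$. Hence $m_k(\lambda)\ge1$ for some $k$, that is, $\lambda\in\mathcal{Z}(P_k)$.

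For part (2), the multiplicity of $\lambda$ as a root of $P_{\mathcal T}$ is $\max_k m_k(\lambda)$. This maximum is at most $1$ for every $\lambda$ exactly when $m_k(\lambda)\le1$ for all $k$ and all $\lambda$, which is the equivalence claimed.

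The only step needing care is the standard description of the lcm through maximal exponents, together with the fact that divisibility among real polynomials is unchanged when they are viewed in $\mathbb{C}[X]$. Beyond that, nothing in the argument is an obstacle.
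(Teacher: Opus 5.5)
Your proof is correct and follows the paper's intended route. The paper gives no written proof, only calling the proposition straightforward as a consequence of $P_{\mathcal T}=\operatorname{l.c.m.}\{P_{\langle{\mathcal T}x_k,x_k\rangle}\mid 1\le k\le p\}$; your maximal-exponent description of this lcm in $\mathbb{C}[X]$ supplies exactly the omitted details for both parts, and you are right that the conclusion holds whether $\mathcal{Z}(\cdot)$ denotes real or complex zeros.
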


In the sequel, let ${\mathcal T} = (T_n)_{n\in\mathbb{Z}_+}\in \mathbf{B}_h(\mathcal{H}) ^{\mathbb{Z}_+}$ be an LRS with  minimal polynomial $P_{\mathcal T}$ of degree $r$. \ For every nonzero $x\in{\mathcal{H}}$ and $n\in \mathbb{Z}_+$, we associate the local infinite and finite-type Hankel matrices, which are respectively defined as follows:
$$
H(x) = \big(\langle T_{i+j}x,x\rangle_\mathcal{H}\big)_{i,j\in \mathbb{Z}_+} \mbox{ and } H_n(x) = \big(\langle T_{i+j}x,x\rangle_\mathcal{H}\big)_{0\le i,j \le n}.
$$

By identifying the polynomial $P(X)=\displaystyle\sum_{k=0}^{n}a_kX^k$ with the column vector $\widehat{P}=(a_0 ,\dots, a_n , 0,0,  \dots  )^{t}$, we derive the following trivial lemma.
\begin{lem}\label{car} Using the above notation, we have
    $P\in \mathcal{P}({\mathcal T}) \iff  H(x)\widehat{P}=\widehat{0}$ for every $x\in {\mathcal{H}}$. \
    In particular, $H(x)\widehat{P_{\mathcal T}}=\widehat{0}$ for every $x\in {\mathcal{H}}$.
   \end{lem}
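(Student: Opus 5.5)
The plan is to unwind the definitions so that both sides of the equivalence become the same family of scalar identities, and then use polarization to pass from the scalar statement to the operator statement.

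First, write $P(X)=\sum_{k=0}^{n}a_kX^k$ with real coefficients. By definition, $P\in\mathcal{P}({\mathcal T})$ means $P(\tau)({\mathcal T})=(0_\mathcal{H})_{m\in\mathbb{Z}_+}$. Equivalently,
\[
\sum_{k=0}^{n}a_kT_{m+k}=0_\mathcal{H}\quad\text{for every } m\in\mathbb{Z}_+ .
\]

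Next, compute the $m$-th entry of the column $H(x)\widehat{P}$. Since $\widehat{P}$ has only finitely many nonzero entries, this entry is a finite sum:
\[
\big(H(x)\widehat{P}\big)_m=\sum_{k=0}^{n}a_k\langle T_{m+k}x,x\rangle_\mathcal{H}=\Big\langle \Big(\sum_{k=0}^{n}a_kT_{m+k}\Big)x,x\Big\rangle_\mathcal{H}.
\]
Thus $H(x)\widehat{P}=\widehat{0}$ for every $x$ is the same as $\langle S_mx,x\rangle=0$ for all $x\in\mathcal{H}$ and all $m$, where
\[
S_m:=\sum_{k=0}^{n}a_kT_{m+k}.
\]
Equivalently, $P$ is a characteristic polynomial of $\langle{\mathcal T}x,x\rangle$ for each $x$.

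\emph{Direction $(\Rightarrow)$.} If $P\in\mathcal{P}({\mathcal T})$, then $S_m=0_\mathcal{H}$ for every $m$. Hence each entry of $H(x)\widehat{P}$ vanishes for every $x$.

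\emph{Direction $(\Leftarrow)$.} This is the only step with any content, though it is standard. A bounded operator $S$ on a complex Hilbert space with $\langle Sx,x\rangle=0$ for all $x$ is zero. To see this, use the polarization identity
\[
\langle Sx,y\rangle=\tfrac14\sum_{j=0}^{3}i^j\langle S(x+i^jy),x+i^jy\rangle ,
\]
which gives $\langle Sx,y\rangle=0$ for all $x,y$. Completeness of the scalar field $\mathbb{C}$ is what makes polarization available. Here each $S_m$ is moreover self-adjoint, being a real combination of self-adjoint operators, so even the real polarization would suffice. Therefore $S_m=0_\mathcal{H}$ for all $m$, i.e. $P(\tau)({\mathcal T})=0$.

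The final assertion follows by taking $P=P_{\mathcal T}$, which lies in $\mathcal{P}({\mathcal T})$ by definition of the minimal polynomial. I expect no real obstacle beyond taking care that the infinite product $H(x)\widehat{P}$ is well defined, which holds because $\widehat{P}$ is finitely supported.
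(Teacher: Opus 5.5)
Your proof is correct and is exactly the argument the paper leaves implicit when it calls Lemma \ref{car} trivial. The $m$-th entry of $H(x)\widehat{P}$ is $\langle S_m x,x\rangle$ with $S_m=\sum_k a_kT_{m+k}$, and polarization upgrades the vanishing of these quadratic forms to $S_m=0_\mathcal{H}$; the paper uses the same polarization step in \eqref{lcm} and in the proof of Corollary \ref{kimm}. One small correction to a side remark: what makes polarization work is that the scalars are complex (or, here, that $S_m$ is self-adjoint), not the completeness of $\mathbb{C}$.
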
 

We also have the following conditions for positivity:
$$\begin{array}{ccl}
     H(x)\ge 0  & \iff&  H_n(x)\ge 0 \mbox{ for every } n\in \mathbb{Z}_+,\\
     &\iff& \widehat{P}^{t}H(x)\widehat{P} \ge 0 \mbox{ for every }  P\in\mathbb{R}[X].
\end{array} 
$$  
 
We derive the following crucial proposition.
\begin{prop}\label{eqpo} Let   ${\mathcal T}$ be a {\rm  LRS} of order $r$ and $x\in{\mathcal{H}}$ be  a nonzero vector. \ The following statements are equivalent.
\begin{enumerate}
    \item  $H(x)\ge 0$;
    \item $H_{r-1}(x)\ge 0$.
\end{enumerate}
\end{prop}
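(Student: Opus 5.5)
The idea is to pass from Hankel matrices to the scalar Riesz functional of the local sequence $\langle{\mathcal T}x,x\rangle$. Then reduce every test polynomial modulo the minimal polynomial $P_{\mathcal T}$, using that the functional vanishes on the ideal generated by $P_{\mathcal T}$.

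The implication $1\Rightarrow 2$ is immediate, since $H_{r-1}(x)$ is a compression (principal finite section) of $H(x)$.

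For $2\Rightarrow 1$, fix a nonzero $x\in\mathcal H$ and put $s_n:=\langle T_nx,x\rangle$. These are real numbers because each $T_n$ is self-adjoint. Define the linear functional $L_x:\mathbb{R}[X]\to\mathbb{R}$ by $L_x(X^n)=s_n$. For $P=\sum_k a_kX^k$ one has $\widehat{P}^tH(x)\widehat{P}=L_x(P^2)$. By the positivity characterization stated just before the proposition, it therefore suffices to show $L_x(P^2)\ge 0$ for every $P\in\mathbb{R}[X]$.

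The key observation is that $L_x$ annihilates the ideal $P_{\mathcal T}\mathbb{R}[X]$. Indeed, $P_{\mathcal T}(\tau)({\mathcal T})=0$ means that for every $n\in\mathbb{Z}_+$ the $n$-th entry of $P_{\mathcal T}(\tau)({\mathcal T})$ vanishes. Pairing with $x$ gives $L_x(X^nP_{\mathcal T})=0$ for all $n$, and hence $L_x(QP_{\mathcal T})=0$ for every $Q\in\mathbb{R}[X]$. This is essentially Lemma \ref{car}, read row by row.

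Now take an arbitrary $P\in\mathbb{R}[X]$ and divide by the monic polynomial $P_{\mathcal T}$ of degree $r$: $P=QP_{\mathcal T}+R$ with $\deg R\le r-1$. Then
$P^2=R^2+P_{\mathcal T}\,(Q^2P_{\mathcal T}+2QR)$, so
$$L_x(P^2)=L_x(R^2)=\widehat{R}^{\,t}H_{r-1}(x)\widehat{R}\ge 0,$$
where the last equality uses that $R$ has degree at most $r-1$, so $\widehat R$ is supported in the first $r$ coordinates. The inequality is exactly hypothesis 2. Hence $H(x)\ge 0$.

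No step is a real obstacle; the only care needed is bookkeeping. First, the recurrence must hold for all shifts $n\ge 0$, which is guaranteed by the definition of $\ker P(\tau)$ on whole sequences. Second, restricting to real polynomials is legitimate because $H(x)$ is a real symmetric matrix; if complex test vectors are wanted, split into real and imaginary parts.
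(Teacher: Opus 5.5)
Your proof is correct and follows the paper's argument: Euclidean division $P=QP_{\mathcal T}+R$ with $\deg R\le r-1$ gives $\widehat{P}^{t}H(x)\widehat{P}=\widehat{R}^{t}H_{r-1}(x)\widehat{R}\ge 0$. You also supply two justifications the paper leaves implicit: that $L_x$ vanishes on the ideal $P_{\mathcal T}\mathbb{R}[X]$, so $L_x(P^2)=L_x(R^2)$, and that real test vectors suffice because $H(x)$ is real symmetric.
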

\begin{proof}
The direct implication is clear. \ To show the converse, let $x\ne 0$, be such that $H_{r-1}(x)\ge 0$. \ For every  $P\in\mathbb{R}[X]$, we use the Euclidean division algorithm to write $P$ as $P=QP_{\mathcal T}+R$ with $\text{deg}(R)\leq r-1$. \ It follows that  $$\widehat{P}^{t}H(x)\widehat{P}=\widehat{R}^{t}H_{r-1}(x)\widehat{R}\geq 0.$$  Finally, $H(x)\ge 0$.
\end{proof}

We say that a polynomial $P\in\mathbb{R}[X]$ has distinct  roots if ${\mathcal Z}(P)=\big\{\lambda_{1},\lambda_{2},\dots,\lambda_{r}\big\}$, with only simple roots. \ The following lemma provides the condition for $P_{\mathcal T}$ to have distinct roots.
\begin{lem}\label{dis}  Let   ${\mathcal T}$ be an {\rm  LRS} of order $r$ such that  $H_{r-1}(x)\ge 0$  for every $x\ne 0$. \ Then its minimal polynomial $P_{\mathcal T}$ has distinct roots.
\end{lem}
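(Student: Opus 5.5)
We reduce to the scalar case via Proposition \ref{aaa}. By Remark \ref{fin} there are $x_1,\dots,x_p\in\mathcal{H}$ with $P_{\mathcal T}=\operatorname{l.c.m.}\{P_{\langle{\mathcal T}x_k,x_k\rangle}\}$. By Proposition \ref{aaa}(2) it then suffices to show that, for every nonzero $x$, the minimal polynomial $p_x:=P_{\langle{\mathcal T}x,x\rangle}$ of the scalar sequence $s_n:=\langle T_nx,x\rangle$ has only simple real roots. (Real roots are needed so that the lcm is real with simple roots; simplicity over $\mathbb{C}$ handles everything.)

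First, by Proposition \ref{eqpo}, the hypothesis $H_{r-1}(x)\ge 0$ gives $H(x)\ge 0$. So $(s_n)$ is a positive semidefinite Hankel sequence satisfying a linear recurrence. Let $d=\deg p_x\le r$; $p_x$ divides $P_{\mathcal T}$ by \eqref{mu}.

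Next we show that $\operatorname{rank} H(x)=d$ and that $H_{d-1}(x)$ is invertible. The columns of $H(x)$ correspond to the shifted sequences $\tau^j s$, and a vector $\widehat Q$ lies in $\ker H(x)$ if and only if $Q(\tau)s=0$. This is the scalar analogue of Lemma \ref{car}, since $(H(x)\widehat Q)_i=(Q(\tau)s)_i$. Hence $\ker H(x)=\{\widehat Q: p_x\mid Q\}$. It follows that the first $d$ columns are independent and every column is a combination of them, so $\operatorname{rank}H(x)=d$. Because $H(x)\ge0$, a principal-minor argument shows $H_{d-1}(x)>0$: positivity means the first $d$ columns are independent exactly when the Gram-type form $\widehat Q^tH\widehat Q$ vanishes only for multiples of $p_x$, and no nonzero $Q$ of degree $<d$ is such a multiple.

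The key step is to show that $p_x$ has simple roots, and we would argue directly from positivity. Suppose $p_x=(X-\lambda)^2q$ with $\lambda\in\mathbb{C}$, or suppose it has a nonreal root. For a real double root, take $Q=(X-\lambda)q$, which has degree $d-1<d$, so $\widehat Q\notin\ker H(x)$. However, $Q^2=p_x q$ is divisible by $p_x$. Since $\ker H(x)$ corresponds to the multiples of $p_x$, we get $\widehat{Q}^tH(x)\widehat Q=L_x(Q^2)=(p_xq)(\tau)s\,|_{0}=0$. For a positive semidefinite matrix, $\widehat Q^tH\widehat Q=0$ forces $H\widehat Q=0$, so $\widehat{Q}\in\ker H(x)$, a contradiction. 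For a nonreal root $\lambda$, the factor $(X-\lambda)(X-\bar\lambda)$ divides $p_x$, and we apply the same trick with $Q=p_x/(X-\lambda)$, which is now complex, using $\overline{Q}Q$ and the Hermitian form. We must check that $L_x(\overline QQ)=0$: the product $\overline Q Q$ equals $p_x$ times a polynomial precisely because both $\lambda$ and $\bar\lambda$ are roots. We expect this divisibility bookkeeping, that is, ensuring $Q\bar Q$ lands in the ideal $p_x\mathbb{C}[X]$ in each case, to be the main (though mild) obstacle. Once it is settled, every $p_x$ has distinct real roots, and Proposition \ref{aaa} concludes that $P_{\mathcal T}$ has distinct roots.
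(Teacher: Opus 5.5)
Your proof is correct and has the same skeleton as the paper's. Proposition \ref{eqpo} upgrades $H_{r-1}(x)\ge 0$ to $H(x)\ge 0$. The scalar sequence $\langle{\mathcal T}x,x\rangle$ is then shown to have a minimal polynomial with simple roots, and Proposition \ref{aaa} transfers this to $P_{\mathcal T}$.

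You differ from the paper in the scalar step. The paper invokes Hamburger's theorem, so that $\langle{\mathcal T}x,x\rangle$ is a moment sequence, and then cites \cite{taher2001recursive} for the fact that a recursive scalar moment sequence has a minimal polynomial with simple roots. You prove this directly instead. Since $\ker H(x)$ consists exactly of the $\widehat Q$ with $p_x\mid Q$, take any root $\lambda$ for which $(X-\lambda)(X-\bar\lambda)$ divides $p_x$, that is, a real double root or a nonreal root. Then $Q=p_x/(X-\lambda)$ has degree $d-1$ and satisfies $\widehat Q^{*}H(x)\widehat Q=L_x(\overline{Q}Q)=0$. Positivity gives $H(x)\widehat Q=0$, which contradicts the minimality of $d$.

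Your route is self-contained and avoids both Hamburger's theorem and the external citation. It also shows explicitly that the roots are real, which is what the paper needs later when the zeros serve as atoms on $\mathbb{R}$. The paper's route is shorter and keeps the moment-sequence interpretation in view.

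Two small points need attention:
\begin{enumerate}
\item In the nonreal case, you should justify that $H(x)\widehat Q=0$ for complex $Q$ forces $p_x\mid Q$ in $\mathbb{C}[X]$. This follows by splitting $Q$ into real and imaginary parts, since $H(x)$ is real; the same observation shows the Hermitian form is positive. Alternatively, avoid complex coefficients: if $p_x=\bigl((X-a)^2+b^2\bigr)q$ with $b\neq 0$, use the real identity $\bigl((X-a)q\bigr)^2+b^2q^2=p_xq$. It forces $L_x(q^2)=0$ and hence $p_x\mid q$, which is impossible by degree.
\item Your paragraph on the rank of $H(x)$ and the invertibility of $H_{d-1}(x)$ is never used and can be dropped.
\end{enumerate}
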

\begin{proof}
If $H_{r-1}(x)\ge 0$  for every $x\ne 0$ , then by Proposition \ref{eqpo}, we have  $H(x)\ge 0$. \ We deduce that $\langle{\mathcal T}x,x\rangle$ is a scalar moment sequence for  every nonzero $x\in {\mathcal{H}}$. \ Using \cite{taher2001recursive}, we derive that $P_{\langle{\mathcal T}x,x\rangle}$ has only simple roots for  every nonzero $x\in {\mathcal{H}}$. \ By appealing to Proposition \ref{aaa}, we conclude that the minimal polynomial $P_{\mathcal T}$ has distinct roots.
\end{proof}

The following lemma specifies the expression of the sequence ${\mathcal T}$ when the minimal polynomial has simple roots. 
\begin{lem}\label{ge} Let  ${\mathcal T}$ be an {\rm  LRS } and let $P_{\mathcal T}$ be its minimal polynomial  with $\mathcal{Z}(P_{\mathcal T})=\big\{\lambda_{1},\lambda_{2},...,\lambda_{r}\big\}$. \ The following statements are equivalent:
\begin{enumerate}
    \item $\mathcal{Z}(P_{\mathcal T})$ consists only with simple roots; 
    \item ${\mathcal T}$ admits a finitely atomic representing {\rm OVC}  supported in $\mathcal{Z}(P_{\mathcal T})$;
    \item  For every nonzero $x\in \mathcal{H},$ $\langle{\mathcal T}x,x\rangle$ admits a finitely atomic scalar representing charge  supported in $\mathcal{Z}(P_{\mathcal T})$;
    \item $\langle{\mathcal T}x_k,x_k\rangle$ admits a finitely atomic scalar representing charge supported in $\mathcal{Z}(P_{\mathcal T})$ for   $k=1,\dots,p$. \ Here  $x_1,\cdots,x_p\in\mathcal{H}$ are given by Remark \ref{fin}, and such that 
$P_{\mathcal T}\mathbb{R}[X]=\displaystyle\bigcap_{1\le k\le p}P_{\langle{\mathcal T}x_k,x_k\rangle}\mathbb{R}[X]$.
\end{enumerate}
Moreover, the finitely atomic representing {\rm OVC} is given by: $ E=\displaystyle\sum_{k=1}^{r}S_k\delta_{\lambda_k}$, where $S_1,\dots, S_r \in\mathbf{B}_h(\mathcal{H}) $.
\end{lem}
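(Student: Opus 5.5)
The plan is to prove the cycle $1\Rightarrow 2\Rightarrow 3\Rightarrow 4\Rightarrow 1$. The substance is in $1\Rightarrow 2$, where we construct the operators $S_k$ by a Vandermonde (Lagrange interpolation) inversion. The other implications are either restrictions to quadratic forms or a scalar fact about linear recursive sequences combined with Proposition \ref{aaa}.

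For $1\Rightarrow 2$, assume $P_{\mathcal T}$ has simple roots $\lambda_1,\dots,\lambda_r$ and let $V=(\lambda_k^{i})_{0\le i\le r-1,\,1\le k\le r}$ be the (invertible) Vandermonde matrix. Define $S_1,\dots,S_r\in\mathbf{B}_h(\mathcal{H})$ as the unique solution of
$$T_i=\sum_{k=1}^r\lambda_k^iS_k,\qquad i=0,\dots,r-1.$$
Equivalently, with the Lagrange polynomials $L_k(X)=\prod_{j\ne k}\frac{X-\lambda_j}{\lambda_k-\lambda_j}$, we set $S_k=L_{\mathcal T}(L_k)$, using the map \eqref{rf}. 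Each $S_k$ is self-adjoint because $V^{-1}$ has real entries and the $T_i$ are self-adjoint. Next we show that $T_n=\sum_k\lambda_k^nS_k$ for all $n$, by induction on $n$. Both sides satisfy the recurrence attached to $P_{\mathcal T}$: the left side by definition of $P_{\mathcal T}$, the right side because $P_{\mathcal T}(\lambda_k)=0$. Since they agree for $n\le r-1$, they agree for every $n$. Hence $E=\sum_k S_k\delta_{\lambda_k}$ is a finitely atomic OVC representing ${\mathcal T}$, supported in $\mathcal{Z}(P_{\mathcal T})$, with $S_k\in\mathbf{B}_h(\mathcal{H})$. This also gives the ``moreover'' clause.

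For $2\Rightarrow 3$, take the scalar charge $\langle E(\cdot)x,x\rangle=\sum_k\langle S_kx,x\rangle\delta_{\lambda_k}$. This is a real charge because each $S_k$ is self-adjoint. The implication $3\Rightarrow 4$ is trivial.

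For $4\Rightarrow 1$, first prove a scalar lemma: if a real sequence $s$ has a representing charge $\sum_{k}c_k\delta_{\lambda_k}$ supported in a finite set $\Lambda$, then its minimal polynomial divides $\prod_{\lambda\in\Lambda}(X-\lambda)$ and so has simple roots. The reason is that $Q(\tau)(s)_n=\sum_k c_k\lambda_k^nQ(\lambda_k)$, which vanishes for $Q=\prod_{\lambda\in\Lambda}(X-\lambda)$. Apply this lemma to each $\langle{\mathcal T}x_k,x_k\rangle$ with $\Lambda=\mathcal{Z}(P_{\mathcal T})$. Each $P_{\langle{\mathcal T}x_k,x_k\rangle}$ then has simple roots, and Proposition \ref{aaa}(2), via the l.c.m.\ description, shows that $P_{\mathcal T}$ has simple roots.

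The main obstacle is keeping the indexing in $1\Rightarrow 2$ correct, specifically the induction extending the identity from $n\le r-1$ to all $n$. The key points are that the recurrence of order $r$ is fixed by its first $r$ terms, and that the Vandermonde inversion is done with real coefficients so that the $S_k$ stay self-adjoint. The remaining implications are routine.
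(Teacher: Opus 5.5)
Your proof is correct. It reaches the paper's conclusion by a somewhat different route in the two nontrivial steps.

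\textbf{The step $1\Rightarrow 2$.} The paper applies the kernel lemma to the shift $\tau$ on $\mathbf{B}_h(\mathcal{H})^{\mathbb{Z}_+}$ and writes $\ker P_{\mathcal T}(\tau)=\bigoplus_k\ker(\tau-\lambda_k\,\mathrm{id})$. It then decomposes ${\mathcal T}=\sum_k{\mathcal T}^{(k)}$ into geometric sequences $T^{(k)}_n=\lambda_k^nT^{(k)}_0$ and sets $S_k=T_0^{(k)}$. You instead solve the Vandermonde system on $T_0,\dots,T_{r-1}$ and extend to all $n$ by uniqueness of solutions of the order-$r$ recurrence. This is the same linear algebra in different form: on $\ker P_{\mathcal T}(\tau)$ the projection onto $\ker(\tau-\lambda_k\,\mathrm{id})$ along the other summands is $L_k(\tau)$, so $T_0^{(k)}=L_{\mathcal T}(L_k)$. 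Your version gives $S_k$ explicitly in terms of the first $r$ moments. This is the same Lagrange device the paper introduces only later, in Theorem \ref{mr}, to check $S_k\ge 0$. The paper's version is more structural and shows what goes wrong with repeated roots, where generalized eigenspaces contribute terms $n^j\lambda^n$.

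\textbf{The step $4\Rightarrow 1$.} The paper simply cites Proposition \ref{aaa} and \cite[Proposition 2.4]{taher2001recursive}. Your observation that $\prod_{\lambda\in\Lambda}(X-\lambda)$ annihilates any sequence whose charge is supported in $\Lambda$ makes this step self-contained. Combined with \eqref{finite}, it even gives $P_{\mathcal T}\mid\prod_k(X-\lambda_k)$ directly.

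\textbf{Two minor remarks.} First, in $2\Rightarrow 3$ the OVC in statement (2) is arbitrary. So you should take the reality of $\langle E(\cdot)x,x\rangle$ from the definition of an OVC, not from the self-adjointness of your constructed $S_k$.

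Second, both your argument and the paper's tacitly use that the $\lambda_k$ are real. You need this for $V^{-1}$ to have real entries; the paper needs it for $\tau-\lambda_k\,\mathrm{id}$ to act on $\mathbf{B}_h(\mathcal{H})^{\mathbb{Z}_+}$. This is an unstated hypothesis of the lemma. For example, $T_n=\cos(n\pi/2)\,I_{\mathcal{H}}$ has minimal polynomial $X^2+1$ with simple roots, so (1) holds, yet no OVC on $\mathcal{B}(\mathbb{R})$ represents it.
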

\begin{proof} Clearly $2 \Rightarrow  3  \Rightarrow  4$ holds and from Proposition \ref{aaa} and \cite[Proposition 2.4]{taher2001recursive}, we have $1 \Leftrightarrow  4.$  \  It remains then to show    $1  \Rightarrow  2$.  \ Indeed, if $P_{{\mathcal T}}(X)=\displaystyle\prod_{k=1}^{r}(X-\lambda_k)$, then according to the kernel lemma  with   $\tau^0=id$, we have
\begin{equation}\label{kl}
\ker P_{{\mathcal T}}(\tau)=\ker(\tau-\lambda_1id)\oplus \ker(\tau-\lambda_2id)\oplus...\oplus \ker(\tau-\lambda_rid).   
\end{equation}
Since, ${\mathcal T}=(T_n)_{n\in\mathbb{Z}_+}\in\ker P_{{\mathcal T}}(\tau)$, from equation \eqref{kl}, there exists a unique ${\mathcal T}^{(k)}=(T_{n}^{(k)})_{n\in\mathbb{Z}_+}\in \ker(\tau-\lambda_kid) \; \; (k=1,...,r)$ such that 
\begin{equation}
     {\mathcal T}={\mathcal T}^{(1)}\oplus...\oplus {\mathcal T}^{(r)}\Leftrightarrow T_n=T_{n}^{(1)}+...+T_{n}^{(r)} \mbox{ for all } n\in\mathbb{Z}_+.
\end{equation}
For $k=1,...,r$, we have:
$$\begin{array}{ccccc}
{\mathcal T}^{(k)}\in \ker(\tau-\lambda_k \mathrm{id}) & \Leftrightarrow &(T_{n+1}^{(k)})_{n\in\mathbb{Z}_+}&=&(\lambda_kT_{n}^{(k)})_{n\in\mathbb{Z}_+}, \\
                                         & \Leftrightarrow & T_{n+1}^{(k)}&=&\lambda_kT_{n}^{(k)} \mbox{ for all } n\in\mathbb{Z}_+, \\
                                         & \Leftrightarrow & T_{n}^{(k)}&=&\lambda_{k}^{n}T_{0}^{(k)} \mbox{ for all } n\in\mathbb{N}.\\
\end{array}$$
For $k=1,\dots, r$, pose $S_k=T_{0}^{(k)}\in\mathbf{B}_h(\mathcal{H}) $. \ Then, we have $  T_n=\lambda_{1}^{n}S_1+\lambda_{2}^{n}S_2+\dots+\lambda_{r}^{n}S_r$ for every $n\in\mathbb{Z}_+$.
\end{proof}
\begin{rem}
   We obtain $2 \Rightarrow 1$ in the following way. \ Assume that there exists  $S_1,\dots, S_r\in \mathbf{B}_h(\mathcal{H})$ such that for all $n\in\mathbb{Z}_+$, 
$  T_n=\lambda_{1}^{n}S_1+\lambda_{2}^{n}S_2+\dots+\lambda_{r}^{n}S_r.
$
Let $P(X)=\displaystyle\prod_{k=1}^{r} (X-\lambda_k)=\sum_{k=0}^r a_k X^k.$ We have,
 \begin{align*}
    P(\tau)({\mathcal T})& =\left( \sum_{k=0}^r a_k T_{n+k}\right)_{n\in\mathbb N}\\
    &= \left( \sum_{k=0}^r a_k( \lambda_{1}^{n+k}S_1+\dots+ \lambda_{r}^{n+k}S_r)\right)_{n\in\mathbb N}\\
    &= \left( P(\lambda_1)\lambda_1^nS_1+\dots +P(\lambda_r)\lambda_r^nS_r\right)_{n\in\mathbb N}\\
    &=(0_\mathcal{H})_{n\in\mathbb N}.
\end{align*}
In particular, $P\in \mathcal{P}({\mathcal T})$, which implies that the polynomial $P_{\mathcal T}$ divides $P$, and therefore  $P_{\mathcal T}$ has simple distinct roots.
\end{rem}

Using the proof of Proposition 3.8 in \cite{curto1996solution} as a guide, we obtain the following extension of the scalar case.
\begin{thm}\label{mr}
Let ${\mathcal T}=(T_n)_{n\in\mathbb{Z}_+}\in\mathbf{B}_h(\mathcal{H}) ^{\mathbb{Z}_+}$ be an {\rm LRS} with minimal polynomial $P_{{\mathcal T}}$ of degree $r$. \ Then the following statements are equivalent:
\begin{enumerate}
     \item ${\mathcal T}$ is an operator $\mathcal{Z}(P_{{\mathcal T}})$--moment sequence;
     \item  ${\mathcal T}$ is a local operator $\mathcal{Z}(P_{{\mathcal T}})$--moment sequence;
    \item For every nonzero $x\in \mathcal{H}$, $H_{r-1}(x)\ge 0$.
\end{enumerate}
More precisely, if $\mathcal{Z}(P_{{\mathcal T}})=\big\{\lambda_{1},\lambda_{2},...,\lambda_{r}\big\}$, then the representing operator-valued measure associated with ${\mathcal T}$ admits the expression $E=\displaystyle\sum_{k=1}^{r}S_k\delta_{\lambda_k}$, where $S_1,\dots,S_r\in\mathbf{B}(\mathcal{H})_{+}$.
\end{thm}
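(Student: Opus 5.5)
The plan is to prove the cycle $1\Rightarrow 2\Rightarrow 3\Rightarrow 1$, with $3\Rightarrow 1$ the only substantial step.

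\emph{$1\Rightarrow 2$.} This is immediate. If $E$ represents $\mathcal T$ with $\text{supp}(E)\subseteq\mathcal{Z}(P_{\mathcal T})$, then $\mu_x:=\langle E(\cdot)x,x\rangle$ is a positive measure supported in $\mathcal{Z}(P_{\mathcal T})$ that represents $\langle\mathcal T x,x\rangle$.

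\emph{$2\Rightarrow 3$.} If $\langle \mathcal T x,x\rangle$ has a positive representing measure $\mu_x$, then for every real polynomial $P$ of degree at most $r-1$,
\[
\widehat P^{t}H_{r-1}(x)\widehat P=\int P(t)^2\,d\mu_x(t)\ge 0 .
\]
Hence $H_{r-1}(x)\ge 0$.

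\emph{$3\Rightarrow 1$.} I would first invoke Lemma \ref{dis}: since $H_{r-1}(x)\ge 0$ for all $x\neq 0$, the polynomial $P_{\mathcal T}$ has distinct roots $\lambda_1,\dots,\lambda_r$. Lemma \ref{ge} ($1\Rightarrow 2$) then gives self-adjoint $S_1,\dots,S_r$ with
\[
T_n=\sum_{k=1}^r\lambda_k^nS_k\quad\text{for all } n .
\]
Thus $E=\sum_k S_k\delta_{\lambda_k}$ is a representing OVC supported in $\mathcal{Z}(P_{\mathcal T})$. It remains to show that each $S_k\ge 0$, since then $E$ is an OVM.

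The positivity of the $S_k$ is the main step, and I would get it from the Lagrange interpolation polynomials $L_k(X)=\prod_{j\neq k}\frac{X-\lambda_j}{\lambda_k-\lambda_j}$. These have degree $r-1$ and satisfy $L_k(\lambda_j)=\delta_{kj}$. Fix $x\neq 0$ and compute
\[
\widehat{L_k}^{t}H_{r-1}(x)\widehat{L_k}=\sum_{i,j}c_ic_j\langle T_{i+j}x,x\rangle=\sum_{m}\langle S_mx,x\rangle L_k(\lambda_m)^2=\langle S_kx,x\rangle ,
\]
where $L_k=\sum_i c_iX^i$. The middle equality substitutes the expansion $T_{i+j}=\sum_m\lambda_m^{i+j}S_m$, and the last uses $L_k(\lambda_m)=\delta_{km}$. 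Hypothesis 3 makes the left side nonnegative, so $\langle S_kx,x\rangle\ge 0$ for every $x$, and each $S_k$ is positive. Hence $E=\sum_k S_k\delta_{\lambda_k}$ is an OVM supported in $\mathcal{Z}(P_{\mathcal T})$ representing $\mathcal T$, which proves $1$ and also gives the stated form of the measure.

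The step I expect to need the most care is the appeal to Lemma \ref{dis} for simplicity of the roots. Without simple roots, Lemma \ref{ge} would not give a purely atomic decomposition, and the interpolation argument could not start. Lemma \ref{dis} in turn relies on Proposition \ref{eqpo} and on the scalar fact that recursive scalar moment sequences have minimal polynomials with simple roots; I would take both as given from the paper.
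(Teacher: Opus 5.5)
Your proposal is correct and follows the paper's proof essentially step for step: Lemma \ref{dis} gives simple roots, Lemma \ref{ge} gives the decomposition $T_n=\sum_k\lambda_k^nS_k$, and the Lagrange interpolation polynomials give $\langle S_kx,x\rangle=\widehat{L_k}^{t}H_{r-1}(x)\widehat{L_k}\ge 0$. You also write out the computation behind that last identity, using $L_k(\lambda_m)=\delta_{km}$, and index $L_k$ consistently with $\lambda_1,\dots,\lambda_r$; the paper states this step in one line.
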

\begin{proof}
Clearly $1\Rightarrow 2 \Rightarrow 3$. \ It remains then to show that $3\Rightarrow 1$. \ Indeed, suppose that $H_{r-1}(x)\ge 0$ for every $x\ne 0$. \ Then from Lemma \ref{dis}, we have $\mathcal{Z}(P_{{\mathcal T}})=\big\{\lambda_{1},\lambda_{2},...,\lambda_{r}\big\}$  consists of simple roots. \ According to Lemma \ref{ge}, there exists a {\it finitely atomic operator-valued charge}  $E=\displaystyle\sum_{k=1}^{r}S_k\delta_{\lambda_k}$ such that
$
T_n=\lambda_{1}^{n}S_1+\lambda_{2}^{n}S_2+...+\lambda_{r}^{n}S_{r} \mbox{ for all } n\in\mathbb{Z}_+.$ \
For $i=1,\cdots,r$, we will now prove that $S_i\ge 0$. \  Indeed,   using the Lagrange interpolation polynomial $\displaystyle L_{i}(X)=\prod_{\substack{j=0\\j\neq i}}^{r-1}\frac{(X-\lambda_{j})}{(\lambda_{i}-\lambda_{j})}$, \ we  obtain $\widehat{L_i}^{t}H_{r-1}(x)\widehat{L_i}=\langle S_ix,x\rangle_{\mathcal{H}}\ge 0$,  for every nonzero $x\in\mathcal{H}$. \ This concludes the proof.
\end{proof}
\begin{rem} Theorem \ref{mr} is equivalent Theorem (\cite[Theorem 4]{kims}) about Tchakaloff's Theorem in finite  dimension (see also  to Corollary \ref{kimm}, above). 
\end{rem}
\subsection{Two particular cases.}
\subsubsection{Recursive sequences of order  $r=1$  or $r=2$}
\begin{ex}$r=1.$
   Let ${\mathcal T}=(T_n)_{n\in\mathbb{Z}_+}\in\mathbf{B}_h(\mathcal{H}) ^{\mathbb{Z}_+}$, such that $T_{n+1}=\lambda T_n$ for every $n\in\mathbb{Z}_+$. \ Then,
 $  {\mathcal T} \text{  is an operator moment sequence
     if and only if } T_0\ge 0.$ \\
In this case, the associated representing {\rm OVM } is given by:  $E=T_0\delta_\lambda.$
\end{ex}
Using \cite[Proposition 4.1]{taher2001recursive} together with Theorem \ref{mr}, we derive the following result.
\begin{cor}$r=2.$\label{2a}
   Let ${\mathcal T}=(T_n)_{n\in\mathbb{Z}_+}\in\mathbf{B}_h(\mathcal{H}) ^{\mathbb{Z}_+}$ be an {\rm  LRS} satisfying  $T_{n+2}=(\lambda_1+\lambda_2) T_{n+1}-\lambda_1\lambda_2 T_n$  with $\lambda_1<\lambda_2$. \ Then the following statements are equivalent:
\begin{enumerate}
     \item ${\mathcal T}$ is an operator moment sequence;
     \item  $\langle T_0x,x\rangle\ge 0$ and $ \langle T_1x,x\rangle^2-(\lambda_1+\lambda_2)  \langle T_1x,x\rangle+\lambda_1\lambda_2  \le 0$; 
     \item for every $x\in \mathcal{H}$, $\begin{pmatrix}
   \langle T_0x,x\rangle& \langle T_1x,x\rangle \\
   \langle T_1x,x\rangle & \langle T_2x,x\rangle
\end{pmatrix}\ge 0$;
     \item $T_0\ge 0$, $T_2\ge 0$ and  for every $x\in \mathcal{H}$, we have $$\langle T_1x,x\rangle^{2}\le \langle T_2x,x\rangle\langle T_0x,x\rangle.$$
\end{enumerate}
Moreover, the associated representing {\rm OVM} is given by
$$
E=\displaystyle\frac{1}{\lambda_1-\lambda_2}(T_1-\lambda_2T_0) \delta_{\lambda_1} +  \frac{1}{\lambda_2-\lambda_1}(T_1-\lambda_1T_0)\delta_{\lambda_2}.
$$
\end{cor}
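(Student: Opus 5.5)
The plan is to reduce everything to Theorem \ref{mr} with $r=2$, and to handle the scalar inequalities by a direct $2\times 2$ computation.

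\textbf{Setup.} If the minimal polynomial $P_{\mathcal T}$ is exactly $(X-\lambda_1)(X-\lambda_2)$, then $\mathcal{Z}(P_{\mathcal T})=\{\lambda_1,\lambda_2\}$ and $r=2$. Theorem \ref{mr} then says that (1) is equivalent to $H_1(x)\ge 0$ for every nonzero $x$, which is exactly statement (3). If instead $P_{\mathcal T}$ is a proper divisor, namely $X-\lambda_i$ (or $1$, i.e.\ ${\mathcal T}=0$), I would use the $r=1$ example: there ${\mathcal T}$ is a moment sequence iff $T_0\ge 0$. I would check that (3) still reduces to $T_0\ge 0$ in that case, since $T_n=\lambda_i^nT_0$ makes the determinant in (3) vanish. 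So (1)$\Leftrightarrow$(3) holds in all cases, with representing measure supported in $\{\lambda_1,\lambda_2\}$.

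\textbf{(3)$\Leftrightarrow$(4).} A real symmetric $2\times 2$ matrix is positive semidefinite iff both diagonal entries and the determinant are nonnegative. Applied pointwise in $x$, this gives $\langle T_0x,x\rangle\ge0$, $\langle T_2x,x\rangle\ge 0$ (that is, $T_0,T_2\ge 0$) and $\langle T_1x,x\rangle^2\le\langle T_2x,x\rangle\langle T_0x,x\rangle$, which is (4).

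\textbf{(3)$\Leftrightarrow$(2).} Write $s_n=\langle T_nx,x\rangle$. Substituting the recursion $s_2=(\lambda_1+\lambda_2)s_1-\lambda_1\lambda_2 s_0$ gives
\[
s_0s_2-s_1^2=-(s_1-\lambda_1s_0)(s_1-\lambda_2s_0)=-\bigl(s_1^2-(\lambda_1+\lambda_2)s_0s_1+\lambda_1\lambda_2s_0^2\bigr).
\]
So the determinant condition is exactly the quadratic inequality in (2). I read (2) in this homogeneous form, which is the normalized scalar criterion of \cite[Proposition 4.1]{taher2001recursive}; for $s_0=1$ it is literally as written. It remains to see that $s_0\ge 0$ together with this inequality forces $s_2\ge 0$. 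The inequality gives $\lambda_1s_0\le s_1\le\lambda_2s_0$. Hence $s_1-\lambda_1 s_0\ge 0$ and $\lambda_2 s_0-s_1\ge 0$, and
\[
s_0s_2=s_1^2-(s_1-\lambda_1s_0)(s_1-\lambda_2s_0)\ge 0 .
\]
If $s_0>0$ this gives $s_2\ge 0$. If $s_0=0$ then $s_1=0$, so $s_2=0$ by the recursion. Therefore the diagonal entries of $H_1(x)$ are nonnegative and (2)$\Rightarrow$(3); the converse (3)$\Rightarrow$(2) is immediate from the displayed identity.

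\textbf{The representing measure.} By Lemma \ref{ge} and Theorem \ref{mr}, $T_n=\lambda_1^nS_1+\lambda_2^nS_2$. Taking $n=0,1$ gives the linear system $S_1+S_2=T_0$, $\lambda_1S_1+\lambda_2S_2=T_1$, whose solution is
\[
S_1=\frac{T_1-\lambda_2T_0}{\lambda_1-\lambda_2},\qquad S_2=\frac{T_1-\lambda_1T_0}{\lambda_2-\lambda_1}.
\]
Their positivity is precisely $\lambda_1T_0\le T_1\le\lambda_2T_0$, which is what (2) gives vector-wise, so $E=S_1\delta_{\lambda_1}+S_2\delta_{\lambda_2}$ as stated. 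The only delicate point I expect is the degenerate case where the minimal polynomial has degree less than $2$, which is handled by the $r=1$ example above.
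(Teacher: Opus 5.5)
Your proposal is correct and follows the paper's route: the paper combines Theorem \ref{mr} (with $r=2$) with the scalar criterion of \cite[Proposition 4.1]{taher2001recursive}, and it checks positivity of the two atoms through the same identity $s_0s_2-s_1^2=-(s_1-\lambda_1s_0)(s_1-\lambda_2s_0)$ that you use. The differences are only in details you supply yourself: you verify the scalar step by hand, you read (2) in its homogeneous form (the correct reading of the misprinted, non-homogeneous condition), and you treat the degenerate case where $P_{\mathcal T}$ is a proper divisor of $(X-\lambda_1)(X-\lambda_2)$, which the paper passes over.
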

\
\begin{rem}
To see that $E$ is an OVM, it suffices to show  that $\lambda_1T_0\le T_1\le \lambda_2T_0$. \ Indeed, from 3 in the Corollary, we get for every $x\in {\mathcal{H}}$, $$\displaystyle(\langle T_1x,x\rangle-\lambda_1\langle T_0x,x\rangle)(\langle T_1x,x\rangle-\lambda_2\langle T_0x,x\rangle)=\langle T_1x,x\rangle^{2}-\langle T_2x,x\rangle\langle T_0x,x\rangle\le 0.$$
\end{rem}

\noindent {\bf Example \ref{ex71} revisited.} In Example \ref{ex71}, we have $T_0 \ge 0$, $T_2\ge 0$ and since $T_1^2=T_0T_2$, we obtain  $\langle T_1x,x\rangle^{2}\le \langle T_2x,x\rangle\langle T_0x,x\rangle, $ for every $x\in \mathcal{H}$. \  An {\rm  LRS} of order $2$ extending $(T_0,T_1)$ will satisfy $\lambda_1T_0\le T_1\le \lambda_2T_0$ and then $\lambda_1n^{-3}\le -n^{-2}\le \lambda_2n^{-3}$, for very $n\ge 1$. \ This yields $\lambda_1 \le -n \le \lambda_2$, which is impossible.\\

We end this section with an example of recursive operator moment sequences inspired by  \cite[Example 6.2]{pietrzycki2022two}.
\begin{ex}
     Consider the self-adjoint operator $B \in \mathbf{B}(\mathcal{H} \oplus \mathcal{H})$ given by the $2 \times 2$ block matrix:
$B =\begin{pmatrix}
    aI_{\mathcal{H}} & bI_{\mathcal{H}} \\
    bI_{\mathcal{H}} & cI_{\mathcal{H}}
\end{pmatrix}$ with $(a - c)^2+b^2\neq 0$. \ Let us also take $W : \mathcal{H} \to \mathcal{H} \oplus  \mathcal{H}$ defined by $Wx = (x, x)$. \ The adjoint operator of $W$ is  $W^* : \mathcal{H} \oplus  \mathcal{H} \to \mathcal{H}$ given by $W^*(x, y) = x+y$  for every $x, y \in \mathcal{H}$. \
The sequence ${\mathcal T} = (T_n)_{n \in \mathbb{Z}_+} \in \mathbf{B}_h(\mathcal{H}) ^{\mathbb{Z}_+}$, where $T_n = W^*B^nW$ for every $n \in\mathbb{Z}_+$, clearly satisfies the recursive relation $T_{n+2} = (a+c)T_{n+1} - (ac-b^2)T_n$. \ Moreover, $T_0=W^*W\geq 0$ and $ T_2=W^*B^2W \geq 0$. \ Also, a simple calculation gives,
$$\langle T_2x, x\rangle_\mathcal{H} \langle T_0x, x\rangle_\mathcal{H} - \langle T_1x, x\rangle_{\mathcal{H}}^{2} = (a - c)^2 \geq 0 \mbox{ for every } x \in \mathcal{S}_\mathcal{H}.$$
Therefore, from Corollary \ref{2a}, ${\mathcal T}$ is an operator moment sequence with a finitely atomic representing OVM  given by: $$E =\displaystyle \frac{1}{\lambda_1-\lambda_2}(T_1-\lambda_0T_0) \delta_{\lambda_1} +  \frac{1}{\lambda_2-\lambda_1}(T_1-\lambda_1T_0)\delta_{\lambda_2},$$  where $\lambda_1,\lambda_2$ are the two distinct roots of the quadratic polynomial $X^2-(a+c)X+ac-b^2$.
\end{ex}
\subsubsection{The case of algebraic operators} Recall that an operator $T$ is algebraic if there exists a polynomial $P$ (that we choose of minimal degree) such that $P(T)=0_\mathcal{H}.$ \ If moreover $T$ is self-adjoint, then $\mathcal{H} = \displaystyle \bigoplus_{\lambda \in  {\mathcal Z}(P)} E_{\lambda}
$, where $E_{\lambda}$ is the  eigenspace  associated with the eigenvalue $\lambda\in {\mathcal Z}(P)$. \ It is clear that ${\mathcal T}=(T^n)_{n\in\mathbb{Z}_+}$ is a recursive sequence with minimal polynomial $P$. \ We have 
\begin{prop} Let $T$ be an algebraic selfdadjoint operator and let $P$ be its minimal polynomial.  Then    ${\mathcal T}=(T^n)_{n\in\mathbb{Z}_+}$  is an operator moment sequence whose representing measure is given by $E=\displaystyle\sum_{\lambda \in  {\mathcal Z}(P)} P_\lambda.\delta_\lambda.$ \ Here $P_\lambda$ is the orthogonal projection onto   $E_\lambda$.
\end{prop}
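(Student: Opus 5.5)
The plan is to use the spectral decomposition of the algebraic self-adjoint operator $T$ directly, rather than to go through Theorem \ref{mr}, although the latter is available as a consistency check. Write ${\mathcal Z}(P)=\{\lambda_1,\dots,\lambda_r\}$. Since $T$ is self-adjoint and $P(T)=0_\mathcal{H}$ with $P$ minimal, $P$ has only real simple roots. Indeed, if a root $\lambda$ had multiplicity at least $2$, then $Q=P/(X-\lambda)$ would satisfy $(T-\lambda)^2Q(T)=0$. For every $x$ this gives $\|(T-\lambda)Q(T)x\|^2=\langle (T-\lambda)^2Q(T)x,Q(T)x\rangle=0$, so $(T-\lambda)Q(T)=0$ with $\deg\big((X-\lambda)Q\big)<\deg P$, contradicting minimality. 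A non-real root is excluded because $\sigma(T)\subseteq\mathbb{R}$ and $\sigma(T)={\mathcal Z}(P)$ for an algebraic operator.

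Next I would identify the projections $P_\lambda$ explicitly through the Lagrange interpolation polynomials $L_k(X)=\prod_{j\neq k}\frac{X-\lambda_j}{\lambda_k-\lambda_j}$, in the spirit of the proof of Theorem \ref{mr}. Set $P_{\lambda_k}:=L_k(T)$. These operators are self-adjoint because $L_k$ has real coefficients. From $\sum_k L_k=1$ and $L_kL_l\equiv\delta_{kl}L_k \pmod P$ we get $\sum_k P_{\lambda_k}=I_\mathcal{H}$ and $P_{\lambda_k}P_{\lambda_l}=\delta_{kl}P_{\lambda_k}$, so they are mutually orthogonal projections. Since $(X-\lambda_k)L_k\equiv 0 \pmod P$, the range of $P_{\lambda_k}$ is contained in $E_{\lambda_k}$. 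Conversely, if $Tx=\lambda_kx$, then $L_l(T)x=L_l(\lambda_k)x=\delta_{kl}x$, so $E_{\lambda_k}$ is contained in the range. Hence $P_{\lambda_k}$ is the orthogonal projection onto $E_{\lambda_k}$, and this also recovers the decomposition $\mathcal{H}=\bigoplus_\lambda E_\lambda$.

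Then $X^n\equiv\sum_k\lambda_k^nL_k \pmod P$, by Lagrange interpolation of $X^n$ at the nodes, which gives $T^n=\sum_k\lambda_k^nP_{\lambda_k}$ for all $n\in\mathbb{Z}_+$. Put $E=\sum_k P_{\lambda_k}\delta_{\lambda_k}$. Each $P_{\lambda_k}\ge0$, so $E$ is an OVM; in fact it is a spectral measure, since $E(\mathbb{R})=I_\mathcal{H}$ and each $E(B)$ is a sum of orthogonal projections onto mutually orthogonal subspaces. By construction it satisfies \eqref{1} with $T_n=T^n$.

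There is no real obstacle here. The only point needing care is the simplicity of the roots of $P$, which is handled by the norm argument above. Alternatively, one can invoke Theorem \ref{mr}: ${\mathcal T}$ is an LRS with minimal polynomial $P$, and $H_{r-1}(x)=(\langle T^{i+j}x,x\rangle)$ is positive since $\widehat{Q}^tH_{r-1}(x)\widehat{Q}=\|Q(T)x\|^2$. Theorem \ref{mr} then yields $E=\sum S_k\delta_{\lambda_k}$ with $S_k=L_k(T)$, as in its proof, and the computation above identifies $S_k=P_{\lambda_k}$.
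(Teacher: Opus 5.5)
Your overall route is the one the paper has in mind. The paper gives no separate proof: it invokes the orthogonal eigenspace decomposition $\mathcal{H}=\bigoplus_{\lambda}E_\lambda$, stated without justification just before the proposition, from which $T^n=\sum_\lambda\lambda^nP_\lambda$ and $E=\sum_\lambda P_\lambda\delta_\lambda$ follow at once. Your Lagrange-interpolation construction $P_{\lambda_k}=L_k(T)$ is a correct, self-contained proof of that decomposition. The congruences $\sum_kL_k=1$, $L_kL_l\equiv\delta_{kl}L_k$, $(X-\lambda_k)L_k\equiv 0$ and $X^n\equiv\sum_k\lambda_k^nL_k \pmod P$ all hold, provided $P=\prod_k(X-\lambda_k)$ with distinct real $\lambda_k$.

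The step that is supposed to establish this, simplicity of the roots, is wrong as written. With $Q=P/(X-\lambda)$ you have $(X-\lambda)Q=P$, so $\deg\big((X-\lambda)Q\big)=\deg P$. The conclusion $(T-\lambda)Q(T)=0$ is then just $P(T)=0$ again, and there is no contradiction. Your version never uses the multiplicity hypothesis, which is a sign of the slip. The correct choice is $Q=P/(X-\lambda)^2$, which is a polynomial precisely because $\lambda$ is a multiple root. Then $(T-\lambda)^2Q(T)=P(T)=0$, and your computation gives $\|(T-\lambda)Q(T)x\|^2=\langle (T-\lambda)^2Q(T)x,Q(T)x\rangle=0$. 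Since $(X-\lambda)Q=P/(X-\lambda)$ has degree $\deg P-1$, this contradicts minimality.

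That norm identity also needs $(T-\lambda)^*=T-\lambda$, i.e.\ $\lambda\in\mathbb{R}$. So exclude non-real roots first, via $\sigma(T)=\mathcal{Z}(P)\subseteq\mathbb{R}$. With these repairs the proof is complete.

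Your alternative through Theorem \ref{mr} avoids the issue entirely. Lemma \ref{dis} supplies simple roots from $H_{r-1}(x)\ge 0$, and applying $L_k$ to $T^n=\sum_j\lambda_j^nS_j$ gives $S_k=L_k(T)$ directly.
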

\section{Declarations}
\subsection{Funding} \ The first-named author was partially supported by NSF grant DMS-2247167. \ The second-named author would like to thank Professor B.V. Rajamara Bhat for his clarification regarding the theory of completely positive maps. \ The last-named author was partially supported by the Arab Fund Foundation Fellowship Program. The Distinguished Scholar Award - File 1026. \ He also acknowledges the Mathematics Department at The University of Iowa for its kind hospitality during the preparation of this paper.

\subsection{Conflicts of interest/competing interests}

{\bf Non-financial interests}: \ The first-named author is on the Editorial Board of {\it Complex Analysis and Operator Theory}.

\subsection*{Data availability.}
All data generated or analyzed during this study are included in this article.


\begin{thebibliography}{99}
\bibitem{adamyan2000solution}
Adamyan, V.M., Tkachenko, I.M.: Solution of the truncated matrix Hamburger moment problem according to M.G. Krein. Operator Theory and Related Topics: Proceedings of the Mark Krein International Conference on Operator Theory and Applications, Odessa, Ukraine, August 18–22, 1997, Volume II, pp. 33–51. Springer (2000)

\bibitem{adamyan2006general}
Adamyan, V.M., Tkachenko, I.M.: General solution of the Stieltjes truncated matrix moment problem. Operator Theory and Indefinite Inner Product Spaces: Presented on the occasion of the retirement of Heinz Langer in the Colloquium on Operator Theory, Vienna, March 2004, pp. 1–22. Springer (2006)

\bibitem{akhiezer2020classical}
Akhiezer, N.I. The classical moment problem and some related questions in analysis.  Oliver \& Boyd (1965)

\bibitem{taher2001recursive}
Ben Taher, R., Rachidi, M., Zerouali, E.H.: Recursive subnormal completion and the truncated moment problem. Bull. London Math. Soc. 33(4):425–432 (2001)

\bibitem{berberian1966notes}
Berberian, S.K.: Notes on Spectral Theory. Number 5. D. Van Nostrand Co. Ltd. 118 pp. (1966)

\bibitem{berg2008matrix}
Berg, C.: The matrix moment problem. Coimbra lecture notes on orthogonal polynomials, pp. 1–57 (2008)

\bibitem{bhat2023operator}
Bhat, B., Ghatak, A., Pamula, S.K.: Operator moment dilations as block operators. arXiv:2302.13873 (2023)

\bibitem{bisgaard1994positive}
Bisgaard, T.: Positive definite operator sequences, Proc. Amer. Math. Soc. 121, 1185--1191 (1994)

\bibitem{busch2016quantum} 
Busch,P., Lahti, P., Pellonp\"a\"a, J.-P., Ylinen, K.: Quantum Measurement, volume 23, Springer (2016)

\bibitem{brandt1999positive} Brandt, H.E.: Positive operator-valued measure in quantum information processing. Amer. J. Phys., 67, 434--439 (1999)

\bibitem{cimprivc2013moment} Cimpri\v{c}, J., Zalar, A.: Moment problems for operator polynomials. J. Math. Anal. Appl. 401, 307--316
(2013)
\bibitem{curt} Curto, R.E.: Quadratically hyponormal weighted shifts. Integral Equations Operator Theory 13, 49--66 (1990)

\bibitem{cehz} Curto, R.E., Ech-charyfy, A., Idrissi, K., Zerouali, E.H.: A Recursive approach to the matrix moment problem. Preprint (2023)

\bibitem{curto1996solution}  Curto, R.E., Fialkow, L.A.: Solution of the truncated complex moment problem for flat data. Mem. Am. Math. Soc. 119(568), $x+52$ pp. (1996) 

\bibitem{curto}  Curto, R.E., Fialkow, L.A.: Recursively generated weighted shifts and the subnormal completion problem, II. Integral Equations and Operator Theory 18, 369--426 (1994)

\bibitem{dubin2014operator} Dubin, D., Kiukas, J., Pellonpää, J.-P., Ylinen, K.: Operator integrals and sesquilinear forms, J. Math. Anal. Appl. 413, 250--268 (2014)

\bibitem{guardeno2001matrix}
Duran, A.J., Rodr\'iguez, P.L.: The matrix moment problem. In Margarita Mathematica: en memoria de José Javier (Chicho) Guadalupe Hernández, pp. 333–348. Universidad de La Rioja (2001)

\bibitem{dyukarev2009distinguished}
Dyukarev, Y.M., Fritzsche, B., Kirstein, B., Mädler, C., Thiele, H.C.: On distinguished solutions of truncated matricial Hamburger moment problems. Complex Analysis and Operator Theory 3, 759–-834 (2009)

\bibitem{ghatage1976subnormal}  Ghatage, P.: Subnormal shifts with operator-valued weights. Proc. Amer. Math. Soc. 57, 107--108 (1976)

\bibitem{ivan} Ivanovski, N.: Subnormality of operator-valued weighted shifts,  Ph.D. dissertation, Indiana University (1973)

\bibitem{kims} Kimsey, D.P.: An operator-valued generalization of Tchakaloff's theorem. J. Funct. Anal. 266:1170--1184 (2014)

\bibitem{KimWoe} Kimsey, D., Woerdeman, H: The truncated matrix-valued $K$--moment problem on $\mathbf{R}^d$, $\mathbf{C}^d$, and $\mathbf{T}^d$. Trans. Amer. Math. Soc. 365, 5393--5430 (2013)

\bibitem{kimsey2022solution}
Kimsey, D.P., Trachana, M.: On a solution of the multidimensional truncated matrix-valued moment problem. Milan J. Math. 90(1):17–101 (2022)

\bibitem{krein1949fundamental}
Krein, M.G.: Fundamental aspects of the representation theory of hermitian operators with deficiency index $(m, m)$. Amer. Math. Soc. Translations, series 2(97):75–143 (1949)

\bibitem{krein1947fundamental}
Krein, M.G., Krasnosel'skii, M.A.: Fundamental theorems on the extension of hermitian operators and certain of their applications to the theory of orthogonal polynomials and the problem of moments. Uspekhi Matematicheskikh Nauk 2(3):60–106 (1947)

\bibitem{lambert1976subnormality} Lambert, A.: Subnormality and weighted shifts. J. London Math. Soc. 2(3):476--480 (1976)

\bibitem{lemnete2014hamburger}
Lemnete-Ninulescu, L.: Hamburger and Stieltjes moment problems for operators. International Scholarly Research Notices (2014)

\bibitem{madler2023truncated} Mädler, C., Schmüdgen, K.: On the truncated matricial moment problem. I. Journal of Mathematical Analysis and Applications, 128569 (2024).

\bibitem{mlak1978dilations}
Mlak, M.:Dilations of Hilbert space operators (general theory).  Instytut Matematyczny Polskiej Akademi Nauk(Warszawa) (1978)

\bibitem{moretti2017spectral}
Moretti, V.: Spectral Theory and Quantum Mechanics, 2nd revised and enlarged edition. Springer International Publishing (2017)

\bibitem{Neumark}
Neumark, M.: Spectral functions of a symmetric operator. Izv. Akad. Nauk SSSR Ser. Mat. 121(4:3):27--318 (1940)

\bibitem{paulsen2002completely}
Paulsen, V.I.: Completely bounded maps and operator algebras. Number 78. Cambridge University Press (2002)

\bibitem{pietrzyckistochelJFA} Pietrzycki, P., Stochel, J.: Subnormal nth roots of quasinormal operators are quasinormal. J. Funct. Anal. 280, 109001 (2021)

\bibitem{pietrzycki2022two}
Pietrzycki, P., Stochel, J.: Two-moment characterization of spectral measures on the real line. Canadian J. Math. pp. 1--24 (2022)

\bibitem{schmudgen2012unbounded}
Schm\"udgen. K.: Unbounded Self-adjoint Operators on Hilbert Space. Volume 265. Springer Science and Business Media (2012)

\bibitem{schmudgen2017moment}
Schm\"udgen, K.: The Moment Problem. Volume 277. Springer (2017)
\bibitem{stampfli1966weighted}
Stampfli, J.: Which weighted shifts are subnormal? Pacific J. Math. 17, 367--379 (1966)
  
\bibitem{Sto} Stochel, J.: Solving the truncated moment problem solves the moment problem. Glasgow Math. J.  43, 335--341 (2001)

\bibitem{Szafran1} Szafraniec, F.H.: Moments from their very truncations. Contemporary Mathematics 435, 363--370 (2007)

\bibitem{Szafran} Szafraniec, F.H.: Sesquilinear selection of elementary spectral measures and subnormality. In
Elementary Operators and Applications, Proceedings, Blaubeuren bei Ulm (Deutschland), June 9–12, 1991.  World Scientific, Singapore, 243--248 (1992)

\bibitem{Nagy1952}
Sz-Nagy, B.: A moment problem for self-adjoint operators.  Acta Mathematica Academiae Scientiarum Hungarica 3, 285-–293 (1952)

\bibitem{tcha} Tchakaloff, V.: Formules de cubature m\'ecanique \`a coefficients non n\'egatifs. Bull. Sci. Math. 81, 123--134 (1957)

\bibitem{vasilescu} Vasilescu, F.-H.: Moment problems for multi-sequences of operators. J. Math. Anal. Appl. 219, 246--259 (1998) 
\end{thebibliography}
\end{document}